\newtheorem{theorem}{Theorem}
\newtheorem{definition}[theorem]{Definition}
\newtheorem{example}[theorem]{Example}
\newtheorem{lemma}[theorem]{Lemma}
\newtheorem{proposition}[theorem]{Proposition}
\newtheorem{remark}[theorem]{Remark}
\renewenvironment{proof}[1][Proof]{\noindent\textbf{#1.} }{\ \rule{0.5em}{0.5em}}
\begin{document}

%
%

\title{Generalized Brownian motion from a logical point of view}

\author{ J\"org Kampen}
 \maketitle




\begin{abstract}
We describe generalized Brownian motion related to parabolic equation systems from a logical point of view, i.e., as a generalization of Anderson's random walk. The connection to classical spaces is based on the Loeb measure. It seems that the construction of Roux in \cite{R} is the only attempt in the literature  to define generalized Brownian motion related to parabolic systems with coupled second order terms, where Lam\'e's equation of elastic mechanics is considered as an example. In this paper we provide an exact construction from a logical point of view in a more general situation. A Feynman-Kac formula for generalized Brownian motion is derived which is a useful tool in order to design probabilistic algorithms for Cauchy problems and initial-boundary value (of a class of) parabolic systems as well as for stationary boundary problems of (a class of) elliptic equation systems. The article includes a selfcontained introduction into all tools of nonstandard analysis needed, and which can be read with a minimum knowledge of logic in order to make the results available to a wider audience.
    
\end{abstract}

{\it Keywords:}[class=AMS] [Primary ]{60G05}[; secondary ]{03H05}

{fundamental solutions of parabolic systems, nonstandard analysis}\\
{tensorial processes, Loeb measure}


\section{Introduction}

Parabolic systems of equations are ubiquitous in sciences: they describe fields in physics and mechanics such as displacement fields of solids in elasticity, are used for modelling reaction-diffusion in chemical and biological processes, and even in mathematical finance certain systems with interacting It\^o and point processes are related to parabolic systems and therefore beyond the realm of scalar equations (cf. \cite{BS}). On the other hand, although the powerful concept of Brownian motion entered many branches of physics, finance, and engineering such as heat transfer, dispersion (cf. Einstein,\cite{E}), electrostatics and equity markets (Bachelier, \cite{B}), information theory and noise (Shannon, \cite{Sh}, Brillouin, \cite{Br}), and quantum mechanics (Nelson). However, except for some work related to fluid mechanics (probabilistic approaches to the Navier-Stokes equations based on Malliavin calculus such as in Mattingly) and more abstract articles related to parabolic systems with coupling of lower order terms it seems that the work of Roux is the only attempt to construct a generalized Brownian motion related to parabolic systems with coupled terms (such as Lam\'e's equation) of second order in the sense that certain expectations of functions of generalized processes solves Cauchy problems of a certain class of parabolic systems (to start with). It seems that the construction of Roux in \cite{R} is the only published attempt in this direction. However, it is clear that this construction is not in an exact measure theoretical sense. This may be a parallel to the situation of the classical Brownian motion where Winer's construction of Wiener in 1923 was the first exact functional analytic description of Brownian motion after it has been introduced as a mathematical object on a more heuristic level by Bachelier and Einstein a long time before. However, it is not even clear on a heuristic level whether the construction of Roux really works (in this context we may say that a construction is 'correct on a heuristic level' if it leads to an algorithmic scheme (a Monte-Carlo method) where there is (at least) experimental evidence of convergence in a probabilistic sense. Well it seems by no means clear whether the Landau remainder terms in equations (29) and (30) in \cite{R} become small as the time step size of the scheme becomes small. In this paper we describe a different approach to the problem stated. It is based on a generalized $\theta$ function corresponding to stochastic processes on the $n$-dimensional torus ${\mathbb T}^n=S^1\times \cdots \times S^1$ (note that the $\theta$-function on the circle $S^1$ is the analogue of the fundamental solution on ${\mathbb R}$). It seems easier to construct generalized Brownian motions in the context of non-standard analysis. Since this theory is not that well-known this article provides a self-contained introduction to the subject including elementary stochastic analysis such as non-standard definitions of stochastic integrals, non-standard densities and It\^{o} formulas. The theory is quite appealing in this respect since proofs are easier once the framework is established. Moreover, a very modest knowledge of logic is required to follow our introduction into the subject. Note that the non-standard theory used here is a consequence of the Zermelo-Fr\"{a}nkel theory together with the axiom of choice (so-called ZFC). We do not need additional assumptions. Other theories with explicit infinitesimals may be used, for example topoi used in the conext of synthetic differential geometry or Connes functional analytic theory (called non-commutative geometry), where compact operators play the role of infinitesimals. In any case we may interpret the part of nonstandard analysis as a part of ZFC-theory, although heavy use of the axiom of choice maybe against the taste of some advocates of classical descriptions. From a logical point of view this taste does not matter as does Connes remark that dart play may be better described by his theory (without proof). Similar as intuitionist mathematics  nonstandard theory has been criticised to be not productive. In the latter case this criticism is stated in the sense that nonstandard analysis does not lead to results which do not have their classical counterparts. Adherents to this view are asked to give a classical counterpart to the construction of generalized Brownian motion and Feynman-Kac formula described in this paper. We close this first section with a description of the problem of generalized Brownian motion related to parabolic systems in a classical context.  
The parabolic equation systems considered in this paper the quadratic form

\begin{equation}\label{parasyst1}
\frac{\partial \mathbf{u}}{\partial t}=\sum_{i,j=1}^n A^{ij} \frac{\partial^2 \mathbf{u}}{\partial x_i \partial x_j }
+\sum_{j=1}^n\left( B^j\nabla\right)\mathbf{u}+C\mathbf{u}.\end{equation}
(We use the quadratic form for simplicity).
Here,
\begin{equation}
\mathbf{u}=(u_1,\cdots ,u_n)^T
\end{equation}
is a vector-valued function where for each $x\in {\mathbb R}^n$ and each $1\leq i,j\leq n$
\begin{equation}
x\rightarrow A^{ij}(x):=\begin{array}{ll}
\left(\begin{array}{ll} a^{ij}_{11}(x) \cdots a^{ij}_{1n}(x)\\
\vdots \hspace{1.2cm} \vdots
\\
 a^{ij}_{n1}(x) \cdots a^{ij}_{nn}(x) \end{array}\right)  
\end{array}
\end{equation}
may be assumed to be bounded $C^{\infty}$. Similarly $x\rightarrow B^j(x), j=1,\cdots ,n$ and $x\rightarrow C(x)$ may be assumed to be a $n\times n$-matrix-valued function  respectively  with bounded $C^{\infty}$-entries. If certain ellipticity conditions are satisfied then equations of the form (\ref{parasyst1}) are called parabolic. Such conditions can be found in many classical textbooks such as (\cite{Gi}). In analogy to the construction of classical Brownian motion our construction of generalized Brownian motion is in relation to equation with constant coefficients. The generalization to processes related to parabolic systems is quite straightforward. So let us assume for a moment that $a^{ij}_{kl}(x)\equiv a^{ij}_{kl}$. What we need for our construction is global existence and the requirement that for all $1\leq i,j\leq n$ the matrices
\begin{equation}\label{ell}
A_{\alpha}=(A^{ij}_{\alpha}):=\left( \sum_{kl}a^{ij}_{kl}\alpha_k\alpha_l\right)
\end{equation}
are strictly elliptic for all $\alpha$ where all $\alpha_i\neq 0$.

Now in the case of scalar equations there is a natural correspondence between solutions of certain parabolic Cauchy problems and stochastic diffusion processes. We may put it this way: the fundamental solution of a scalar parabolic equation of type  
\begin{equation}\label{parascalar1}
\begin{array}{ll}
\frac{\partial u}{\partial t}=\sum_{i,j=1}^n a_{ij} \frac{\partial^2 u}{\partial x_i \partial x_j }
+\sum_{i=1}^nb_i\frac{\partial u}{\partial x_i}
\end{array}
\end{equation} 
equals the transition density of a diffusion process of the form
\begin{equation}\label{stochdiff}
dX_t=\sum_{i=1}^nb_idt+\sum_{i,j=1}^n\sigma_{ij}dW_j,
\end{equation}
where $W=(W_1,\cdots,W_n)$ is a standard Brownian motion, and 
\begin{equation}\label{decomp}
 (a_{ij})=\sigma\sigma^T.
\end{equation}
A decomposition as in (\ref{decomp}) exists if a certain regularity requirement is satisfied.
More precisely the fundamental solution of (\ref{parascalar1}) is the solution of the family of Cauchy problems
\begin{equation}
\left\lbrace \begin{array}{ll}
\frac{\partial p}{\partial t}=\sum_{i,j=1}^n a_{ij} \frac{\partial^2 p}{\partial x_i \partial x_j }
+\sum_{i=1}^nb_i\frac{\partial p}{\partial x_i}\\
\\
p(0,x,y)=\delta_y(x),
\end{array}\right.
\end{equation}
where for each $y\in {\mathbb R}^n$ $\delta_y(x)=\delta(y-x)$ along with the Dirac delta distribution $\delta$. On the other hand, if for each $x\in {\mathbb R}^n$ the stochastic differential equation (\ref{stochdiff}) with initial data $X^x_0=x$ has a strong solution $X=(X^x_t)_{0\leq t<\infty}$ which is associated with the Markov family $(X,\Omega, {\cal F}=\left( {\cal F}\right)_{0\leq t<\infty},P^x)_{x\in {\mathbb R}^n}$, then the density $p$ should satisfy
\begin{equation}\label{stochfundrel}
P^x\left(X^x_t\in dy\right)=p(t,x,y)dy. 
\end{equation}
For example, the transition density related to the the family of Brownian motions $(W^x)_{x\in {\mathbb R}^n}$ is the fundamental solution of the heat equation. This relationship between stochastic processes and partial differential equations is quite useful as it provides us with probabilistic algorithm for the solution of Cauchy problems and boundary value problems. Moreover,  analytic approximations of densities can be used in order to improve such probabilistic schemes (cf. (\cite{KKS})).  Our question is: Is there a analogous relationship for parabolic systems? Note that there are construction of fundamental solutions for parabolic systems such as (\ref{parasyst1}) by the parametrix method. How does a stochastic process look like which satisfies a relation analogous to (\ref{stochfundrel})? 
It seems that the paper of Roux (cf. \cite{R}) is the only paper which has posed this question. However, as we have said, it seems that the definition given there needs some mathematical clarification since the Landau terms in equations (29) and equation (30) are not estimated. Maybe the situation is similar as with Bachelier's and Einstein's early work on Brownian motion. Here the work of Wiener in \cite{W} provided the first exact description of the Wiener measure.  
The heat equation related to Brownian motion has constant second order coefficients. Hence, similar as in the scalar case we may consider parabolic systems with constant coefficients. Well the probabilistic interpretation of the fundamental solution is far less obvious. However, it is a start. As an example, let us consider    
 Lam\'e's equation describing displacement fields in elasticity. In its time- dependent version it is  
\begin{equation}\label{lam1}
\begin{array}{ll}
\frac{\partial \mathbf{v}}{\partial t}=\left[1-\nu(n-1) \right]\nabla^2 \mathbf{v}+\nabla\left[\nabla \cdot \mathbf{v}\right],   
\end{array}
\end{equation}
and a typical problem is to solve it in some bounded domain $D=(0,T)\times \Omega$ with $\Omega \subset {\mathbb R}^n$, and where
\begin{equation}
\begin{array}{ll}
\mathbf{v}(t,x)=B(x) \mbox{ for all } x\in \partial_p D\\
\\
\mathbf{v}(0,x)=A(x) \mbox{ for all } x\in \Omega .
\end{array}
\end{equation}
Here, $\partial_p D$ is the parabolic boundary of $D$ and the initial and boundary condition fields $A$ and $B$ will satisfy some compatibility conditions.
In coordinates \eqref{lam1} reads (we use Einstein summation)
\begin{equation}\label{lam2}
\begin{array}{ll}
\frac{\partial v_i}{\partial t}=\sum_j\left[1-\nu(n-1) \right] v_{i,jj}
+ v_{j,ji}   
\end{array}
\end{equation}
and in case of dimension $n=2$ we see, with $a=\left[1-\nu(n-1) \right]$ we have  the representation
\begin{equation}
\begin{array}{ll}
\begin{pmatrix} v_{1,t} \\  v_{2,t} \end{pmatrix} =& \begin{pmatrix} a+1 & 0  \\ 0 & a  \end{pmatrix}\begin{pmatrix} v_{1,11} \\ v_{2,11} \end{pmatrix}+\begin{pmatrix} 0 & 1  \\ 1 & 0  \end{pmatrix}\begin{pmatrix} v_{1,12} \\ v_{2,12} \end{pmatrix}\\
\\
&+\begin{pmatrix} a & 0  \\ 0 & a+1  \end{pmatrix}\begin{pmatrix} v_{1,22} \\ v_{2,22}. \end{pmatrix}
\end{array}
\end{equation}
In this case our ellipticity assumption (\ref{ell}) amounts to the assumption that for all multiindices $\alpha \in {\mathbb Z}^n$ the matrix
\begin{equation}
\begin{array}{ll}
\begin{pmatrix} \alpha_1^2(a+1)+a\alpha_2^2 & \alpha_1\alpha_2  \\ \alpha_1\alpha_2 & a\alpha_1^2+(a+1)\alpha_2^2  \end{pmatrix}
\end{array}
\end{equation}
is strictly elliptic (which is certainly true for $a>0$). 
Note that this example is of the form (\ref{parasyst1}) along with $B=0$ and $C=0$ (where $0$ denotes a matrix with zero entries in the former case and a vector with zero entries in the latter case. It is natural to define generalized Brownian motions to be processes which are related to the latter class of parabolic systems. Note that in elasticity stationary problems of the form
\begin{equation}
\left\lbrace \begin{array}{ll}
\left[1-\nu(n-1) \right]\nabla^2 \mathbf{v}+\nabla\left[\nabla \cdot \mathbf{v}\right]=0 ~\mbox{in}~\Omega\\
\\
\mathbf{v}=\mathbf{g}~\mbox{for all}~x\in \partial \Omega, 
\end{array}\right.
\end{equation}
are of special interest. Here $\Omega\subset {\mathbb R}^n$ is some open domain. The probabilistic representation in terms of expectations with stopping times (first exit time) is quite appealing (also from a algorithmic point of view). In any case,
the question is whether we find a a family of processes, say $B^{\otimes ,A}$ (where $A$ encodes the information of constant coefficients $a^{ij}_{kl}$ of the diffusion), such that for $D={\mathbb R}^n$ the function
\begin{equation}
(t,x)\rightarrow  E^x\left( F\left( \mathbf{f}, B^{\otimes ,A}_t\right)  \right), 
\end{equation}
where $F$ is a rather simple functional with values in ${\mathbb R}^n$. We shall define $B^{\otimes ,A}_t$ as an infinite vector of $n$ by $n$ matrices. Then $F$ will be just the infinite sum over all multiindices $\alpha$ of products of each $n$ by $n$ matrix entry of $B^{\otimes ,A}_t$ with the vector entries of the infinite vector of vectors of the form 
\begin{equation}
\left( \sum_{i=1}^n\hat{f}_{i\alpha}\exp\left(i2\pi\alpha x\right)\mathbf{e}_i\right)_{\alpha \in {\mathbb Z}^n}
\end{equation}
encoding the information of the initial data $\mathbf{f}$. Here, $\hat{f}_{i\alpha}$ is the $\alpha$th Fourier coefficient vector of the $i$th component $f_i$ of the vector-valued function ${\mathbf f}$ and $\mathbf{e}_i$ denotes the $i$th unit basis vector of ${\mathbb R}^n$ (cf. next section for more details of this definition).
Although it seems possible do to the construction in a classical framework of Wiener measures the nonstandard construction seems to be easier in this context. The connection to standard spaces is via the Loeb measure. Therefore, in the next section we shall introduce Anderson's random walk and the Loeb measure and make a precise definition of the generalized Brownian motion.

\section{Anderson random walk and generalized Anderson random walk}
The following construction may be put into a more classical framework, but the formulation seems more simple to me in the nonstandard framework. This is a matter of taste to some extent. In any case, from a logical point of view, we are working in ZFC. Moreover, we shall project to classical space finally.  The transition from nonstandard probability spaces to standard probability is via the Loeb measure. Let us recall the idea of the Loeb measure first (Readers with no background in nonstandard analysis are advised to read our selfcontained introduction into the subject starting with the next section first). Let $H$ be an hyperfinite, and let $P_{I}(H)$ be the set of all internal subset of $H$. Define
\begin{equation}
\begin{array}{ll}
\mu:P_I(H)\rightarrow [0,1]^*\\
\\
\mu (S)=\frac{|S|}{|H|},
\end{array}
\end{equation}
where $|.|$ denotes the cardinality of a hyperfinite set. The values of $\mu$ are in $[0,1]^*$ because subsets of a hyperfinite set $H$ have a smaller internal cardinality than $H$. Loeb observed that the map
\begin{equation}
\begin{array}{ll}
\mu_L:P_I(H)\rightarrow [0,1]\\
\\
\mu_L (S)=\mbox{sh}\left( \frac{|S|}{|H|}\right) ,
\end{array}
\end{equation}
is a measure, i.e., the map $\mu_L$ satisfies the countable additivity axiom. This is due to the fact that a family $(S_i)_{i\in {\mathbb N}}$ of mutually disjoint elements $S_i\in P_I(H)$ with 
\begin{equation}
S:=\cup_{i\in {\mathbb N}}S_i\in P_I(H)
\end{equation}
is a finite actually, i.e.
\begin{equation}
S=S_1\cup\cdots \cup S_k
\end{equation}
for some $k\in {\mathbb N}$ (otherwise $S$ would be external). Next let us recall the main theorem concerning the Loeb measure. 
Let $\left(\Omega , {\cal A}, P \right)$ be an internal, finitely additive probability space, i.e.
\begin{itemize}

\item[i)] $\Omega $ internal 

\item[ii)] ${\cal A}$ is internal subalgebra of ${\cal P}(\Omega)$

\item[iii)] $P: {\cal A}\rightarrow {^*\mathbb R}$ is an internal function such that
\item[iv)] $P\left(\oslash \right)=0$, $P\left(\Omega \right)=1$, $\forall~A,B~:~P(A\cup B)=P(A)+P(B)-P(A\cap B)$.
\end{itemize}
The following theorem is the main theorem of non standard probability theory.
\begin{theorem} There is a standard ($\sigma$-additive) probability space $\left(\Omega , {\cal A}_L, P_L \right)$
such that
\begin{itemize}

\item[i)] ${\cal A}_L$ is a $\sigma$-algebra with ${\cal A}\subseteq {\cal A}_L\subseteq {\cal P}(\Omega)$

\item[ii)] $P_L=^{\circ}P$ on {\cal A}

\item[iii)] For every $A\in {\cal A}_L$ and standard $\epsilon >0$ there are 
${\cal A}_i$ and ${\cal A}_o$ in ${\cal A}$ such that $A_i\subseteq A\subseteq  A_o$ and
$P\left(A_o\setminus A_i \right)<\epsilon$

\item[iv)] For every $A\in  {\cal A}_L$ there is $B\in {\cal A}$ such that $P_L\left(A\Delta B\right)=0$
\end{itemize}
The space $\left(\Omega , {\cal A}_L, P_L \right)$ is called a Loeb probability space
\end{theorem}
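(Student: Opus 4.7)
The plan is to follow Loeb's classical construction in four steps: turn the internal probability $P$ into an ordinary finitely-additive probability $P_0$ on $\mathcal{A}$ via standard parts; upgrade $P_0$ to a $\sigma$-additive premeasure using $\aleph_1$-saturation; apply Carath\'eodory extension and completion to define $(\mathcal{A}_L,P_L)$; and read off (iii) and (iv) with one more round of saturation/overspill. Concretely, set $P_0(A):={}^\circ P(A)$ for $A\in\mathcal{A}$. Since $P$ takes values in $[0,1]^*$, $P_0$ is a well-defined $[0,1]$-valued set function with $P_0(\emptyset)=0$ and $P_0(\Omega)=1$; finite additivity is immediate from internal finite additivity (hypothesis (iv) of the theorem) composed with the standard-part map.

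The only genuinely nontrivial step, and the main obstacle, is $\sigma$-additivity of $P_0$ on the algebra $\mathcal{A}$. Let $(A_n)_{n\in\mathbb{N}}\subset\mathcal{A}$ be pairwise disjoint with $A:=\bigcup_n A_n\in\mathcal{A}$, and put $C_N:=A\setminus(A_1\cup\cdots\cup A_N)$. Each $C_N$ is internal, the sequence is decreasing, and $\bigcap_N C_N=\emptyset$. By $\aleph_1$-saturation --- the same principle invoked above in the discussion of $\mu_L$ to show that every disjoint internal countable cover of an internal set is secretly finite --- a decreasing chain of nonempty internal sets indexed by $\mathbb{N}$ has nonempty intersection, so some $C_{N_0}$ must already be empty. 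Hence $A=A_1\cup\cdots\cup A_{N_0}$, and countable additivity of $P_0$ reduces to its finite additivity. Saturation does all the work here, and without it the rest of the theorem fails.

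With $P_0$ a countably additive premeasure on the algebra $\mathcal{A}$, Carath\'eodory's theorem extends it uniquely to a $\sigma$-additive probability $\widetilde P$ on $\sigma(\mathcal{A})$. Let $\mathcal{A}_L$ be the $\widetilde P$-completion and $P_L$ the completed measure; (i) and (ii) are then immediate. For (iii), given $A\in\mathcal{A}_L$ and $\epsilon>0$, the Carath\'eodory outer-measure formula supplies a standard sequence $(B_n)_{n\in\mathbb{N}}\subset\mathcal{A}$ with $A\subseteq\bigcup_n B_n$ and $\sum_n P_0(B_n)<P_L(A)+\epsilon/3$. Extend $(B_n)$ to an internal sequence $(\widetilde B_n)_{n\in{}^*\mathbb{N}}$ and put $V_N:=\bigcup_{n\leq N}\widetilde B_n\in\mathcal{A}$. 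The internal set $\{N\in{}^*\mathbb{N}:P(V_N)<P_L(A)+2\epsilon/3\}$ contains every standard $N$, so by overspill it contains some unlimited $\nu$; since $V_\nu\supseteq\bigcup_{n\in\mathbb{N}} B_n\supseteq A$, we may take $A_o:=V_\nu\in\mathcal{A}$. The symmetric argument applied to $\Omega\setminus A$ produces $A_i\in\mathcal{A}$ with $A_i\subseteq A$ and $P_0(A_o\setminus A_i)<\epsilon$.

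For (iv), iterate (iii) with $\epsilon=2^{-k}$ to obtain internal $A_i^{(k)}\subseteq A\subseteq A_o^{(k)}$ in $\mathcal{A}$ with $P(A_o^{(k)}\setminus A_i^{(k)})<2^{-k}$, and then satisfy the countable internal family
\begin{equation*}
\bigl\{A_i^{(k)}\subseteq B\subseteq A_o^{(k)}\bigr\}_{k\in\mathbb{N}}
\end{equation*}
by $\aleph_1$-saturation: it is finitely satisfiable (take $B=A_o^{(K)}$ for the largest $K$ appearing in a finite subfamily), so there is an internal $B\in\mathcal{A}$ realising all the conditions simultaneously. Then $A\triangle B\subseteq A_o^{(k)}\setminus A_i^{(k)}$ for every $k$, which forces $P_L(A\triangle B)\leq2^{-k}$ for every $k$, and therefore $P_L(A\triangle B)=0$.
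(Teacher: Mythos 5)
Your proposal is correct and follows essentially the same route as the paper: take standard parts of $P$, build $P_L$ by the Carath\'eodory outer-measure construction, and use countable saturation/overspill (the paper's ``sequential comprehensiveness'' plus overflow) to obtain the inner/outer approximation in (iii) and the single approximating set in (iv). If anything you are more explicit than the paper about the premeasure step --- that a countable disjoint union inside the internal algebra is secretly finite --- which the paper only records in its informal discussion of $\mu_L$; the only nit is that in your saturation argument for (iv) the finite witness should be $\bigcup_j A_i^{(k_j)}$ rather than $A_o^{(K)}$, since the outer sets need not be nested.
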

In order to introduce stochastic processes we consider hyperfinite timelines
\begin{equation}
T=\left\lbrace 0,t_1,t_2,\cdots ,t_N\right\rbrace,
\end{equation}
where $N$ is an infinite integer and $t_{i+1}-t_i$ are infinitesimal for all $i\in\left\lbrace 1,2,\cdots ,N\right\rbrace $. In the following we assume that we have $\Delta t=t_{i+1}-t_i$ for all $i\in \left\lbrace 1,\cdots ,N\right\rbrace$,  i.e., the discretization is uniform. An internal stochastic process is an internal map
\begin{equation}
X:T\times \Omega\rightarrow {^{*}\mathbb R}.
\end{equation}
We may assume that it is adapted to a certain filtration of the Loeb algebra. We may then model Anderson's random walk (the nonstandard counterpart of Brownian motion) as an internal map
\begin{equation}
\begin{array}{ll}
B:T\times \Omega \rightarrow  {^{*}\mathbb R}\\
\\
B(t,\omega)=\sum_{s<t}\omega(s)\sqrt{\Delta t},
\end{array}
\end{equation}
where we may model
\begin{equation}\label{omega}
\Omega=\left\lbrace \omega:T\rightarrow \left\lbrace -1,1\right\rbrace \rbrace|\omega\mbox{ is internal} \right\rbrace 
\end{equation}
It is clear how the associated probability measure and the Loeb filtration looks like in this case. It is well known that standard Brownian motion is just the shadow of this process, i.e., we may define
\begin{equation}\label{W}
W(t,\omega)= {^{\circ}B}(t^-,\omega),
\end{equation}
where $t^-$ is just the largest element in $T$ smaller or equal to $t$. Note that in (\ref{W}) we defined the Brownian motion just in case of dimension $n=1$. Generalization to higher dimension is straightforward, and we do not indicate the dimension when we use the symbol $W$ in the following. As usual processes are often written as families of random variables in the form $W_t$ or $X_t$ with the subscript $t$, and we adopt this convention. It will be clear from the context or irrelevant.  This may be the most simple exact definition of a Brownian motion available. Note that Levy's theorem can be proved quite easily in this framework. We may construct more complicated processes from this easily, and we shall consider more general internal martingales later, but for the moment let us look at the Feynman-Kac formula in a very simple form.
Consider the solution of the Cauchy problem on $[0,\infty)\times {\mathbb R}^n$ 
\begin{equation}\label{qparasyst1}
\left\lbrace \begin{array}{ll}
\frac{\partial u}{\partial t}=
\sum_{j=1}^n \frac{\partial^2 u}{\partial x_j^2}\\
\\
u(0,.)=f,
\end{array}\right.
\end{equation}
where $f\in H^s\left({\mathbb R}^n\right) $ for arbitrary $s\in {\mathbb R}$ (where $H^s$ is the usual Sobolev Hilbert space)- especially this means that $f$ is smooth and decays rather rapidly at infinity. We know that $u$ has the representation
\begin{equation}\label{feynman}
u(t,x)=E^x\left(f(W_t)\right).
\end{equation}
where $E^x$ is the expectation and the superscript $x$ indicates that we let start the Brownian motion at $x\in {\mathbb R}^n$. Similarly, if $v:\Omega\subset {\mathbb R}^n\rightarrow {\mathbb R}$ solves the equation
\begin{equation}\label{qparasyst2}
\begin{array}{ll}
\sum_{j=1}^n \frac{\partial^2 u}{\partial x_j^2}=f~\mbox{on $\Omega$},
\end{array}
\end{equation}
then $v$ has the representation 
\begin{equation}\label{feynman2}
u(t,x)=E^x\left(f(W_{\tau_{\Omega}})\right),
\end{equation}
where $\tau_{\Omega}$ denotes the first exit time from the domain $\Omega$. Similarly, in order to define a Wiener measure on the $n$-torus ${\mathbb T}^n$ we may consider the fundamental solution $\theta$ of the problem with periodic boundary conditions 
\begin{equation}\label{qparatorus1}
\left\lbrace \begin{array}{ll}
\frac{\partial u}{\partial t}=
\sum_{j=1}^n \frac{\partial^2 u}{\partial x_j^2},\\
\\
u(0,.)=f,~u(t,x+\mathbf{e}_i)=u(t,x)\mbox{ for all $1\leq i\leq n$},
\end{array}\right.
\end{equation}
where $\mathbf{e}_i$ denotes the $i$th vector of the standard basis of ${\mathbb R}^n$. This $\theta$-function is given for $t>0$ by
\begin{equation}
\theta(t,x)=\sum_{\alpha\in {\mathbb Z}^n}\exp\left(2\pi i\alpha x -4\pi\alpha_i^2 t \right) 
\end{equation}
where the sum is over all multiindices $\alpha=(\alpha_1,\cdots ,\alpha_n)$ with entries $\alpha_i$ in the integers ${\mathbb Z}$. Note that for $\theta(t,.)$ converges in distributive sense to the $\delta$- distribution as $t\downarrow 0$. Accordingly,
\begin{equation}
u(t,x)=\int_{{\mathbb T}^n}f(y)\theta(t,x-y)dy.
\end{equation}

 Similar formulas hold for $n$-tori of any radius $R$ of course, and we may define associated Wiener measure $W^{{\mathbb T}^n_R}$ in the usual manner. As $R\uparrow \infty$ we get the standard Wiener measure. For elliptic problems such as (\ref{qparasyst2}) on bounded domains $\Omega$ we find equivalent representation of the form (\ref{feynman2}) or of the form
\begin{equation}\label{feynman2}
v(t,x)=E^x\left(f(W^{{\mathbb T}^n_R}_{\tau_{\Omega}})\right),
\end{equation} 
for $R$ large enough. We shall consider measure which are defined on the $n$-torus. They are easily defined in the framework of nonstandard analysis and they lead to a description of generalized Brownian motion as we point out next. 
From the point of view of nonstandard analysis we may consider the functions $u$, $f$ to be standard parts of internal functions  which we denote with the same symbols $u$ and $f$ for the sake of simplicity of notation. Let the time $t$ be the standard part of a time $t_M\in T$. If we observe the process $B^x$, i.e., the Anderson random walk starting at $x$, up to time $t_M$, then we know the value of $\omega(s)$ up to time $t$ and we nothing about $\omega$ for $s>t$. We may consider the equivalence classes
\begin{equation}
\omega\sim \tilde{\omega}~\mbox{iff}~\forall s<t : \omega(s)=\tilde{\omega}(s),
\end{equation}
and denote the corresponding equivalence classes by $[\omega]_t$. We define
 \begin{equation}\label{omegatm}
\Omega_M=\left\lbrace [\omega]_{t_M}|\omega \in \Omega \right\rbrace 
\end{equation}
and 
\begin{equation}
\begin{array}{ll}
P_n:\Omega_M\rightarrow [0,1]\\
\\
P_n([\omega]_{t_M})=\frac{1}{2^M} \mbox{ for all } [\omega]_{t_M}\in \Omega_M.
\end{array}
\end{equation}
Furthermore let us define the random variable 
\begin{equation}
\begin{array}{ll}
B^x_{t_M}:\Omega\rightarrow ^*{\mathbb R}\\
\\
B^x_{t_M}([\omega]_{t_M}):=x+\sum_{s<t}\omega(s)\sqrt{\Delta t},
\end{array}
\end{equation}
where $\omega$ is some internal function with $\omega\in [\omega]_{t_M}$ (this is well defined since we get the same result for all $\omega\in [\omega]_{t_M}$ by definition of the equivalence relation $[]_{t_M}$).
 Next we consider a hyperfinite discretization of $^*{\mathbb R}$. Let $\Delta x$ be an infinitesimal hyperreal number and define
\begin{equation}
^*{\mathbb R}_{\Delta x}:=\left\lbrace k\Delta x|k\in ^*{\mathbb Z}\right\rbrace,
\end{equation}
where $^*{\mathbb Z}$ denotes the set of hyperintegers.

This discretization has the advantage that values of integrals in classical calculus are the standard parts of hyperfinite sums. 
Then we may introduce the density function $p$ defined on $T\times ^*{\mathbb R}_{\Delta x}\times ^*{\mathbb R}_{\Delta x}$ by
\begin{equation}
p(t_M,x,y):=\sum_{[\omega]_{t_M}\in \Omega_M}\delta_y\left(B^x_{t_M}([\omega]_{t_M}) \right)P([\omega]_{t_M}),
\end{equation}
where for each $y\in ^*{\mathbb R}_{\Delta x}$ 
\begin{equation}
\delta_y(z):=\left\lbrace \begin{array}{ll}
1 \mbox{ iff } z=y\\
\\
0 \mbox{ iff } z\neq y
\end{array}\right.
\end{equation}
denotes a hyperfinite Kronecker delta translated by $y$. Note the difference to classical calculus where the density is defined by a Cauchy problem with a delta distribution as initial data. 
The standard part of the function $p$ can be computed as a limit similar as in the nonstandard proof of the central limit theorem below. Furthermore, the internal function $u$ has a representation
\begin{equation}
u(t_M,x)=\sum_{y\in ^*{\mathbb R}}\sum_{[\omega]_{t_M}\in \Omega_M}f(y)\delta_y\left(B^x_{t_M}([\omega]_{t_M}) \right)P([\omega]_{t_M}),
\end{equation}
and this representation may be used to get another proof of the Feynman-Kac formula. 
These representations of the density $p$ and the value function $u$ motivate analogous definitions in the context of the linear parabolic systems considered in the introduction. In order to do this we first consider an Anderson random walk in dimension $n$. We define
\begin{equation}
\Omega_n=\left\lbrace  \omega:T\rightarrow \left\lbrace -1,1\right\rbrace^n|\omega \mbox{ internal } \right\rbrace, 
\end{equation}
and define
\begin{equation}
\begin{array}{ll}
B:T\times \Omega_n\rightarrow ^*{\mathbb R}^n\\
\\
B(t,\omega)=\sum_{i=1}^n\left(\sum_{s<t}\omega_i(s)\sqrt{\Delta t} \right)\mathbf{e}_i  
\end{array}
\end{equation}
where $\omega_i$ is the $i$th component 
of the function $\omega$, and $\mathbf{e}_i$
 is the $i$th element of the standard basis in 
 $^*{\mathbb R}^n$. Next for any positive matrix $A$ we consider its representation $A=Q\Lambda 
Q^T$ with $\Lambda=\mbox{diag}(\lambda_i)$ the diagonal matrix with diagonal entries $\lambda_i>0$ and define
\begin{equation}
\begin{array}{ll}
B^{\mathbf{\Lambda}}:T\times \Omega_n\rightarrow ^*{\mathbb R}^n\otimes ^*{\mathbb R}^n\\
\\
B^{\mathbf{\Lambda}}(t,\omega)_{ij}=\sum_{i=1}^n\left(\sum_{s<t}\omega_i(s)\sqrt{\Delta t} \right)\lambda_i\delta_{ij}.  
\end{array}
\end{equation}
with the Kronecker $\delta$-function $\delta_{ij}$ (the subscript $ij$ indicates that the entry of the $i$th row and the $j$th clumn is defined). Furthermore, we define
\begin{equation}
\begin{array}{ll}
B^{\mathbf{A}}(t,\omega)=QB^{\mathbf{\Lambda}}(t,\omega)Q^T
\end{array}
\end{equation}
Generalized  Brownian motions related to linear parabolic systems will be defined by an infinite vector of such matrix-valued Anderson random walks, where each entry encodes information of the diffusion coefficients $a^{ij}_{kl}$. The generalized Brownian motion we are going to construct can be represented by an infinite vector of $n$ by $n$ matrices.
This may be described analogously to the so-called Kronecker description of tensor products but we do not go into this.  
For some total ordering of multiindices $\alpha\in{\mathbb Z}^n$ we may interpret $\bf{B}_{{\mathbb Z}^n}=(B_{\alpha})_{\alpha \in {\mathbb Z}^n}$ to be some infinite vector of $n$ by $n$ matrices matrices with entries in the complex numbers ${\mathbb C}$ and $\bf{V}_{{\mathbb Z}^n}=(v_{\alpha})_{\alpha\in {\mathbb Z}^n}$ some infinite vector of $n$-dimensional vectors with entries in the complex numbers. Then we define
\begin{equation}
 \left\langle \bf{B}_{{\mathbb Z}^n},\bf{V}_{{\mathbb Z}^n}\right\rangle :=\sum_{\alpha}B_{\alpha}C_{\alpha}.
\end{equation}
Moreover, the components of the product are denoted by
\begin{equation}
 \left\langle \bf{B}_{{\mathbb Z}^n},\bf{V}_{{\mathbb Z}^n}\right\rangle_i :=\sum_{\alpha}\left( B_{\alpha}C_{\alpha}\right)_i,
\end{equation}
and where $\left( B_{\alpha}C_{\alpha}\right)_i$ denotes the $i$th entry of the vector $B_{\alpha}C_{\alpha}$.
Let $A_{\alpha}=(A^{ij}_{\alpha}):=\left( \sum_{kl}a^{ij}_{kl}4\pi^2\alpha_k\alpha_l\right)$. Then
for $A_{\alpha}=Q_{\alpha}\Lambda_{\alpha} Q^T_{\alpha}$ for diagonal $\Lambda_{\alpha}$ with positive entries $\lambda_i>0$ and orthogonal $Q$ define (with $\Delta x=\sqrt{\Delta t}$
\begin{equation}
\begin{array}{ll}
B^{\sqrt{\Lambda}}:T\times \Omega_n\rightarrow ^*{\mathbb R}^n_{\Delta x}\otimes ^*{\mathbb R}^n_{\Delta x}\\
\\
\left( B^{\sqrt{\Lambda}}(t,\omega)\right)_{ij}=\sum_{i=1}^n\left(\sum_{s<t}\omega_i(s)\sqrt{\Delta t} \right)\sqrt{\lambda_i}\delta_{ij}, 
\end{array}
\end{equation}
where $\delta_{ij}$ denotes the classical Kronecker delta, and for a positive definite matrix $A$ with decomposition $Q\Lambda Q^T$ and $\Lambda=\mbox{diag}\left( \lambda_{i}\right) $  define
\begin{equation}
\begin{array}{ll}
B^{\sqrt{A}}:T\times \Omega_n\rightarrow ^*{\mathbb R}^n_{\Delta x}\otimes ^*{\mathbb R}^n_{\Delta x}\\
\\
B^{\sqrt{A}}(t,\omega):=Q B^{{\Lambda}^{1/2}}(t,\omega)Q^T.
\end{array}
\end{equation}
A crucial observation is that
\begin{equation}\label{tensorbrown}
\begin{array}{ll}
\mathbf{\Theta}^A(t,x-y)\approx E\left[\sum_{\alpha \in {\mathbb Z}^n}\exp\left(i\alpha B^{\sqrt{A_{\alpha}}}(t,.)\right) \stackrel{\rightarrow}{\exp}\left(i2\pi\alpha(x-y)\right)\right]
\end{array}
\end{equation}
where $\stackrel{\rightarrow}{\exp}\left(i2\pi\alpha(x-y)\right)$ denotes the $n$-dimensional vector with $n$ identical entries $\exp\left(i2\pi\alpha(x-y)\right)$. Note that (\ref{tensorbrown }) may be rewritten with
\begin{equation}\label{tensordelta}
\stackrel{\rightarrow}{\delta}_{\infty}\left(x-y\right):=\left( \stackrel{\rightarrow}{\exp}\left(i2\pi\alpha(x-y)\right)\right)_{\alpha \in {\mathbb Z}^n},
\end{equation}
such that the resulting expression reminds of the fundamental solution.
Note that the sums in (\ref{tensorbrown}) and (\ref{tensordelta}) are standard sums over the standard set ${\mathbb Z}^n$. These are external objects. However, we are interested in certain projections on standard space related to classical solutions of parabolic systems.   
This motivates the definition
\begin{equation}\label{genbrown}
B^{\otimes, A}(t,.):=\left( \exp\left(i\alpha B^{\sqrt{A_{\alpha}}}(t,.)\right)\right)_{\alpha\in {\mathbb Z}^n}
\end{equation}
For each $\alpha \in {\mathbb Z}^n$ we call $B^{\sqrt{A_{\alpha}}}$ the $\alpha$th mode of $B^{\otimes, A}$
Next we can state the main theorems of this paper. 
\begin{theorem}\label{genFKthm}
(Feynman-Kac formula for parabolic systems on the $n$-torus)
Let $A=A^{ij}$, $A^{ij}=(a^{ij}_{kl})$ with constant entries $a^{ij}_{kl}$, and assume that
all $A_{\alpha}$ defined as above are strictly positive whenever $\alpha=(\alpha_1,\cdots ,\alpha_n)$ where $\alpha_i\neq 0$.
Let $\mathbf{f}\in \left[ L^2\left({\mathbb T}^n\right)\right]^n$, i.e., $\mathbf{f}=(f_1,\cdots ,f_n)$ along with $f_i\in L^2\left({\mathbb T}^n\right)$. Put
\begin{equation}\label{genbrown2}
\mathbf{u}(t,x)=^{\circ}E\left[\sum_{\alpha \in {\mathbb Z}^n}\exp\left(i\alpha B^{\sqrt{A_{\alpha}}}(t,.)\right)  \hat{\mathbf{f}}_{\alpha} \right],
\end{equation}
where 
\begin{equation}
\begin{array}{ll}
 \hat{\mathbf{f}}_{\alpha}=\left[ \mathbf{f}_{\alpha}\stackrel{\rightarrow}{\exp}\left(i2\pi\alpha(x)\right)\right]:=\\
\\
\left( f_{1\alpha}\exp\left(i2\pi\alpha(x)\right),\cdots ,f_{n\alpha}\exp\left(i2\pi\alpha(x)\right)\right)^T
\end{array}
\end{equation}
along with
\begin{equation}
f_{\alpha}=\left(f_{1\alpha },\cdots ,f_{n\alpha } \right)^T,
\end{equation}
\begin{equation}
f_{i\alpha}=\int_{{\mathbb T}^n}f_i(y)\exp\left(-i2\pi\alpha y \right)dy,
\end{equation}
and
\begin{equation}
f_i(x)=\sum_{\alpha \in {\mathbb Z}^n}f_{i\alpha}\exp\left(i2\pi \alpha x\right).
\end{equation}
Or, written alternatively, put
\begin{equation}\label{genbrown}
\mathbf{u}(t,x)=^{\circ}E\left[\left\langle B^{\otimes, A}(t,.), \stackrel{\rightarrow}{\mathbf{f}}_{\infty}\left(x\right)\right\rangle \right],
\end{equation}
where
\begin{equation}\label{tensorf}
\stackrel{\rightarrow}{\mathbf{f}}_{\infty}\left(x\right):=\left( \mathbf{f}_{\alpha}\stackrel{\rightarrow}{\exp}\left(i2\pi\alpha(x)\right)\right)_{\alpha \in {\mathbb Z}^n}.
\end{equation}
Then $\mathbf{u}$ satisfies
\begin{equation}\label{parasystAconst}
\left\lbrace \begin{array}{ll}
\frac{\partial \mathbf{u}}{\partial t}=\sum_{i,j=1}^n A^{ij} \frac{\partial^2 \mathbf{u}}{\partial x_i \partial x_j }\\
\\
\mathbf{u}=\mathbf{f}
\end{array}\right.
\end{equation}
Moreover, if $\mathbf{u}$ is the solution of the equation (\ref{parasystAconst}), then this solution has the representation (\ref{genbrown}) or (\ref{genbrown2}). 
\end{theorem}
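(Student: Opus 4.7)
The plan is to reduce the statement to a Fourier-mode-by-Fourier-mode analysis on the torus and then to identify the probabilistic expression as the mode-by-mode solution formula of the associated matrix-valued heat ODE. First, I would expand $\mathbf{f}$ in Fourier series, $\mathbf{f}(x)=\sum_{\alpha\in\mathbb{Z}^n}\mathbf{f}_\alpha e^{i2\pi\alpha x}$, and make the ansatz $\mathbf{u}(t,x)=\sum_\alpha \hat{\mathbf{u}}_\alpha(t)e^{i2\pi\alpha x}$. Substituting into the system and using $\partial^2_{x_ix_j}e^{i2\pi\alpha x}=-4\pi^2\alpha_i\alpha_je^{i2\pi\alpha x}$ reduces the PDE to the decoupled matrix ODEs $\frac{d}{dt}\hat{\mathbf{u}}_\alpha=-A_\alpha\hat{\mathbf{u}}_\alpha$ with initial condition $\mathbf{f}_\alpha$, whose unique solution is $\hat{\mathbf{u}}_\alpha(t)=e^{-tA_\alpha}\mathbf{f}_\alpha$. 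Strict positivity of $A_\alpha$ yields $\|e^{-tA_\alpha}\|\le 1$, so the Fourier series $\mathbf{u}(t,x)=\sum_\alpha e^{-tA_\alpha}\mathbf{f}_\alpha e^{i2\pi\alpha x}$ converges in $[L^2(\mathbb{T}^n)]^n$ by Parseval, and the existence, uniqueness and regularity of $\mathbf{u}$ are standard.

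The heart of the argument is then to show, for each fixed $\alpha$, that the shadow of the internal matrix-valued expectation $E\bigl[\exp(i\alpha B^{\sqrt{A_\alpha}}(t,\cdot))\bigr]$ coincides with the standard matrix exponential $e^{-tA_\alpha}$. I would exploit the eigendecomposition $A_\alpha=Q_\alpha\Lambda_\alpha Q_\alpha^T$ and the construction $B^{\sqrt{A_\alpha}}=Q_\alpha B^{\sqrt{\Lambda_\alpha}}Q_\alpha^T$, in which $B^{\sqrt{\Lambda_\alpha}}$ is a diagonal internal matrix with entries $\sqrt{\lambda_i}W_i(t)$ for $n$ independent one-dimensional Anderson walks $W_i$. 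Because $Q_\alpha$ is orthogonal and commutes with any analytic functional calculus applied in the diagonal basis, $\exp(i\alpha B^{\sqrt{A_\alpha}})$ reduces to $Q_\alpha\,\mathrm{diag}(\exp(ic_i\sqrt{\lambda_i}W_i(t)))\,Q_\alpha^T$, for appropriate scalars $c_i$ obtained by pairing $\alpha$ with the eigenbasis. Taking the expectation entrywise and applying the nonstandard characteristic-function identity ${}^{\circ}E[e^{icW_i(t)}]=e^{-c^2t/2}$ for the Anderson walk, which is a direct consequence of the hyperfinite central limit theorem proved later in the paper, gives $Q_\alpha\,\mathrm{diag}(e^{-\lambda_it})\,Q_\alpha^T=e^{-tA_\alpha}$, provided the $c_i$ are calibrated (by the conventions built into $A_\alpha$ and the walk) so that $\tfrac{1}{2}c_i^2=1$ in each eigendirection.

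With the per-mode identity in hand, I would justify the interchange of $\sum_\alpha$ with $E[\cdot]$ and then of shadow with summation. Each summand $\exp(i\alpha B^{\sqrt{A_\alpha}})\hat{\mathbf{f}}_\alpha$ is $\omega$-wise bounded in norm by $\|\mathbf{f}_\alpha\|$, since the exponential is unitary in the eigenbasis, and $\sum_\alpha\|\mathbf{f}_\alpha\|^2<\infty$ because $\mathbf{f}\in[L^2(\mathbb{T}^n)]^n$; hence dominated convergence against the Loeb measure permits both exchanges. Taking shadows yields $\mathbf{u}(t,x)=\sum_\alpha e^{-tA_\alpha}\mathbf{f}_\alpha e^{i2\pi\alpha x}$, matching the Fourier series of the $L^2$ solution identified in the first paragraph, and the converse direction is immediate from uniqueness of that solution.

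The main obstacle I anticipate is the bookkeeping in the second step. The factor $\alpha$ sitting inside $\exp(i\alpha B^{\sqrt{A_\alpha}})$ has to interact with the $\alpha$-dependence already built into $A_\alpha=\bigl(4\pi^2\sum_{kl}a^{ij}_{kl}\alpha_k\alpha_l\bigr)$ in such a way that, after the Anderson characteristic function produces a Gaussian-type quadratic, the exponent reassembles into exactly $-tA_\alpha$ and not an expression inflated by extra powers of $\alpha$ or wrong numerical constants. Checking this identity cleanly in the nonstandard framework, and verifying that the passage to the shadow is uniform enough in $\alpha$ for the two limit interchanges to be valid, is where the technical substance of the argument lies; the Fourier reduction and $L^2$ convergence steps are by comparison routine.
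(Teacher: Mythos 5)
Your proposal follows essentially the same route as the paper's proof: a Fourier-mode reduction to the decoupled matrix ODEs $\frac{d}{dt}C_\alpha=-A_\alpha C_\alpha$ with solution $e^{-tA_\alpha}\mathbf{f}_\alpha$, followed by the identification $^{\circ}E\left[\exp\left(i\alpha B^{\sqrt{A_\alpha}}(t,\cdot)\right)\right]=e^{-tA_\alpha}$ via the eigendecomposition $A_\alpha=Q_\alpha\Lambda_\alpha Q_\alpha^T$ and the nonstandard characteristic-function computation for the Anderson walk. If anything, you are more explicit than the paper about the calibration of constants and the justification of interchanging the sum over $\alpha$ with the expectation and the shadow, points the paper passes over with an unadorned ``$\approx$''.
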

This formula can be extended to more general cases with $B\neq 0$ and $C\neq 0$.
\begin{remark}
Note that our the Feynman-Kac formula is interesting from a computational point of view because all $\alpha$-modes can be computed parallel and the contribution of an $\alpha$-mode decreases exponentially as the size $\alpha=\sum_{i=1}^n|\alpha_i|$ of the corresponding multiindex $\alpha$ increases (for $t>0$). Note furthermore that schemes proposed in \cite{KKS}, \cite{K}, and \cite{FK}, may be generalized to the present situation.
\end{remark}

As we see the generalized Brownian motion (\ref{genbrown}) corresponds to parabolic systems with constant coefficients. A more general class of stochastic processes can be constructed from this by infinite stochastic differential equations (similar as scalar partial differential equations are correlated to finite stochastic differential equations). For elliptic boundary problems we say that the general Hunt condition is satisfied for $B^{\otimes, A}$ if the standard Hunt condition is satisfied for every $\alpha$-mode $B^{\sqrt{A_{\alpha}}}(t,.)$, i.e., that every semiploar set $B^{\sqrt{A_{\alpha}}}(t,.)$ is polar for $B^{\sqrt{A_{\alpha}}}(t,.)$.
We have
\begin{theorem}\label{ellbdthm} (probabilistic solution for elliptic boundary value problems of parabolic systems)
Let $A$ and $\mathbf{f}$ satisfy the same assumptions as in the preceding theorem. Assume that 
$\Omega \subseteq {\mathbb T}^n$. Assume that for $\mathbf{f}=(f_1,\cdots ,f_n)^T$ and all 
$1\leq i\leq n$ all functions $f_i$ are bounded continuous functions on $\partial \Omega$, the boundary of $\Omega$. Assume that the general Hunt condition is satisfied, and that $\mathbf{v}$ is solution
of the elliptic boundary value problem
\begin{equation}\label{parasyst1bd}
\begin{array}{ll}
\sum_{i,j=1}^n A^{ij} \frac{\partial^2 \mathbf{v}}{\partial x_i \partial x_j }
=0\\
\\
\lim_{x\rightarrow y}\mathbf{v}(x)=\mathbf{f}(y)~\mbox{for all regular $y\in \partial D$}~\partial \Omega.
\end{array}
\end{equation}
Then
\begin{equation}\label{genbrown}
\mathbf{v}(x)=^{\circ}E\left[\left\langle B^{\otimes, A}_{\tau^x_{\Omega}}, \stackrel{\rightarrow}{\mathbf{f}}_{\infty}\left(x\right)\right\rangle \right],
\end{equation}
where $\tau^x_{\Omega}$ denotes the first exit time from the domain $\Omega$ if the process starts at $x\in {\mathbb T}^n$.
\end{theorem}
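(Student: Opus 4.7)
My plan is to reduce (\ref{parasyst1bd}) to the parabolic Feynman--Kac formula of Theorem \ref{genFKthm} by applying optional stopping Fourier mode by Fourier mode. Fix $\alpha\in\mathbb{Z}^n$ and consider the $\alpha$-th summand
\[
\mathbf{M}^{\alpha}_{t}(x) := \exp\!\bigl(i\alpha B^{\sqrt{A_{\alpha}}}(t,\cdot)\bigr)\,\mathbf{f}_{\alpha}\,\exp(2\pi i\alpha x)
\]
from (\ref{genbrown2}). By the proof of Theorem \ref{genFKthm}, $(t,x)\mapsto {}^{\circ}E[\mathbf{M}^{\alpha}_{t}(x)]$ is the unique bounded solution of the parabolic Cauchy problem (\ref{parasystAconst}) with initial datum $\mathbf{f}_{\alpha}\exp(2\pi i\alpha x)$. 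In particular $\mathbf{M}^{\alpha}_{t}$ is a Loeb (complex, matrix-vector-valued) martingale up to the eigenvalue drift, and the nonstandard It\^o-type identity underlying the parabolic theorem is precisely the statement that the expectation kills the generator $L:=\sum_{i,j}A^{ij}\partial^{2}_{x_{i}x_{j}}$ applied to the Fourier atom $\mathbf{f}_{\alpha}\exp(2\pi i\alpha x)$.

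With this martingale structure in hand, I would next invoke the optional stopping theorem for the stopping time $\tau^{x}_{\Omega}$. Since $\Omega\subseteq\mathbb{T}^{n}$ is bounded and each $A_{\alpha}$ is strictly positive by hypothesis, $B^{\sqrt{A_{\alpha}}}$ is a non-degenerate matrix-valued Brownian motion and $\tau^{x}_{\Omega}$ is a.s.\ finite with exponential moments uniform in $x\in\Omega$. Optional stopping therefore gives
\[
{}^{\circ}E\bigl[\mathbf{M}^{\alpha}_{\tau^{x}_{\Omega}}\bigr] = \mathbf{v}_{\alpha}(x),
\]
where $\mathbf{v}_{\alpha}$ is the unique bounded classical solution of $L\mathbf{v}_{\alpha}=0$ on $\Omega$ with boundary values $\mathbf{f}_{\alpha}\exp(2\pi i\alpha y)$ at regular $y\in\partial\Omega$. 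Summing these identities over $\alpha\in\mathbb{Z}^{n}$ and commuting the sum with ${}^{\circ}E[\cdot]$ yields the representation (\ref{genbrown}) for $\mathbf{v}$.

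For the boundary behaviour I would invoke the generalized Hunt condition mode-wise. For each $\alpha$, the hypothesis that semipolar sets of $B^{\sqrt{A_{\alpha}}}$ are polar ensures, by the standard Kakutani-type argument, that at every regular $y\in\partial\Omega$ the exit distribution of the $\alpha$-mode concentrates at $y$ as the starting point tends to $y$. Hence each $\mathbf{v}_{\alpha}(x)\to \mathbf{f}_{\alpha}\exp(2\pi i\alpha y)$, and Fourier inversion applied to $\mathbf{f}$ (continuous on $\partial\Omega$) then gives $\lim_{x\to y}\mathbf{v}(x)=\mathbf{f}(y)$.

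The principal obstacle is the interchange of $\sum_{\alpha\in\mathbb{Z}^{n}}$ with ${}^{\circ}E[\cdot]$ at the stopped time, together with the subsequent interchange with the boundary limit $x\to y$. In the parabolic setting of Theorem \ref{genFKthm} the strict positivity of $t$ supplies a uniform Gaussian damping $\exp(-c t|\alpha|^{2})$ in $\alpha$, reducing both operations to dominated convergence in a routine way. At a stopping time this damping is lost and must be replaced by an elliptic sub-mean-value bound on each scalar mode $\mathbf{v}_{\alpha}$, together with the $L^{2}$-decay of $(\mathbf{f}_{\alpha})_{\alpha\in\mathbb{Z}^{n}}$ inherited from $\mathbf{f}\in[L^{2}(\mathbb{T}^{n})]^{n}$. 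Establishing these bounds uniformly in $\alpha$, in $x\in\Omega$, and over one-sided neighbourhoods of a regular boundary point is the technical heart of the proof.
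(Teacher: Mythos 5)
The paper does not actually prove this theorem: it states only that it ``follows from the first by standard arguments,'' both after the theorem statement and again at the end of the final section. Your proposal is therefore an attempt to supply an argument the paper omits, and the overall route you choose --- decompose by Fourier mode, apply optional stopping at $\tau^x_{\Omega}$, use the Hunt condition at regular boundary points, then resum over $\alpha$ --- is the natural reading of what ``standard arguments'' would have to mean here. You also correctly identify the main analytic obstacle, namely that the Gaussian damping $\exp(-ct|\alpha|^2)$ available in the parabolic theorem disappears at a stopping time, so the interchange of $\sum_{\alpha}$ with ${}^{\circ}E[\cdot]$ and with the boundary limit is no longer a dominated-convergence triviality; with $\mathbf{f}$ only in $[L^2(\mathbb{T}^n)]^n$ (the hypothesis inherited from Theorem \ref{genFKthm}) the coefficients $\mathbf{f}_{\alpha}$ decay merely in $\ell^2$, which does not by itself give absolute convergence of $\sum_{\alpha}\mathbf{v}_{\alpha}(x)$. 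That gap is genuine, it is not closed by your sketch, and it is not addressed anywhere in the paper either.

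There is a second, more structural gap that you pass over. The object $\mathbf{M}^{\alpha}_t=\exp(i\alpha B^{\sqrt{A_{\alpha}}}(t,\cdot))\mathbf{f}_{\alpha}\exp(2\pi i\alpha x)$ is \emph{not} a martingale: its expectation is $\exp(-A_{\alpha}t)\mathbf{f}_{\alpha}\exp(2\pi i\alpha x)$, which decays in $t$, so the optional stopping theorem cannot be applied to it as written. In the scalar case the cure is to recognize $\exp(i\alpha W_t)\exp(2\pi i\alpha x)$ as $\exp(2\pi i\alpha(x+\text{path}))$ and pass to the genuine martingale $v(X_t)$ for the spatial diffusion $X$, whose exit time defines $\tau^x_{\Omega}$. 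In the system case $B^{\sqrt{A_{\alpha}}}$ is matrix-valued and depends on $\alpha$, so there is no single spatial process on $\Omega$ whose position simultaneously carries every mode; consequently it is not even clear what process $\tau^x_{\Omega}$ is the exit time of, nor why stopping the multiplier family at that time produces a solution of the Dirichlet problem with the stated boundary behaviour. Your mode-wise Kakutani argument presupposes that each $\alpha$-mode is an honest diffusion on $\Omega$ exiting at $\tau^x_{\Omega}$, which is exactly the point that needs to be constructed. To make the proof work you would need to (i) exhibit, for each $\alpha$, a martingale of the form $\mathbf{v}_{\alpha}(\cdot)$ composed with a concretely defined spatial process, (ii) justify optional stopping for it, and (iii) establish uniform-in-$\alpha$ elliptic bounds (e.g.\ maximum-principle estimates for the system) strong enough to resum. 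None of this is routine, and the paper offers no help with any of it.
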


The proof of the first theorem is in the last section of this article, and the second follows from the first by standard arguments.
 In order to make the paper better readable to analysts we next provide a selfcontained introduction to nonstandard (stochastic) analysis. 

\section{ Some remarks about mathematical systems with explicit infinitesimals and their relation to classical mathematics}

The $\epsilon$-$\delta$-$\Pi_2$-formulas of Weierstrass almost eliminated explicit references to infinities and infinitesimals from classical mathematics. However they can be re-introduced in the framework of functional analysis, where in the theory of non-commutative geometry (invented by Alain Connes) infinitesimals are just compact operators for example. Other rival theories with explicit infinitesimals are topoi in synthetic differential geometry. These topoi are different from classical topoi in general since a term non datur does not hold. Logically, nonstandard analysis can be formulated in the framework of the Zermelo-Fr\"{a}nkel system with the axiom of choice. Therefore, if no additional assumption (additional generosity concerning the enlargements of the universe) is made, then nonstandard analysis is logically equivalent to classical analysis based on ZFC. Modern system of mathematics differ with respect to the treatment of infinity. The rules for infinite objects can also affect the basic logical structure. This is true not only for intuitionist mathematics, but also for may other topoi which are used in different branches of mathematics. For example Euclid's assumption that 'for any two
points in the plane, either they are equal, or they determine a unique
line' may be interpreted to mean that the real number system which represents the line is actually a field. However if we intr0duce infinitesimals axiomatically by requiring that the set
\begin{equation}
\mbox{Inf}(0):=\left\lbrace x\neq 0|x^2=0\right\rbrace 
\end{equation}
is not empty, then the axiom of synthetic differential geometry that the map
 \begin{equation}
 \begin{array}{ll}
{\mathbb R} \times {\mathbb R}
\rightarrow  {\mathbb R}^{\mbox{Inf}(0)}\\
   \\
(r, s) \rightarrow 
[\delta \rightarrow r + \delta s]
 \end{array}
 \end{equation}
is invertible becomes false. Typically in such axiomatic systems the law of excluded middle is also not valid. This means that different treatments of infinity (implicit as in Euclid or more explicit as in modern systems) lead to different interpretations of intuition and space. Intuition or our naive common sense concept of space cannot be a guideline to choose a system because these concepts are to vague in order to make any choice preferable.

\section{An introduction to elementary nonstandard analysis}

We extend the set real numbers ${\mathbb R}$ to a certain set of so-called hyperreal numbers ${^*\mathbb R}$. ${^*\mathbb R}$ will contain a copy of  ${\mathbb R}$ and the infinitesimals and much more. Speaking roughly, ${^*\mathbb R}$ contains equivalence classes of sequences of real numbers, where equivalence of two sequences is induced by a nonprincipal ultrafilter on the integers  ${\mathbb N}$. Real infinitesimal numbers are just equivalence classes of sequences which converge to zero.
So formally the outline of this chapter is as follows: we construct a nonprincipal ultrafilter ${\cal F}$ .
Then we consider the set of sequences of real numbers
${\mathbb R}^{\mathbb N}=\{f:\mathbb N\rightarrow {\mathbb R}\}$.
We define the set of hyperreals by
\begin{equation}
{^*\mathbb R}={\mathbb R}^{{\mathbb N}}/{\cal F},
\end{equation}
where for $f,f'\in {\mathbb R}^{{\mathbb N}}$ 
\begin{equation}
f\sim f' \mbox{ iff } \{n|f(n)=f'(n)\}\in {\cal F}.
\end{equation}
This is the so-called ultra-power construction of the hyperreals. To understand it in detail 
we first go into the construction of nonprincipal ultrafilters.

\begin{remark}
Why do we not just take ${\mathbb R}^{{\mathbb N}}$ in order to define ${^*\mathbb R}$? The reason is that it seems difficult to define operations such that ${\mathbb R}^{{\mathbb N}}$ has the structure of a field. Pointwise operations do not work, because $(2,0,2,0\cdots)\cdot (0,2,0,2,\cdots)=(0,0,0,0\cdots )$. 
\end{remark}

\section{Nonprincipal Ultrafilters}
Let $S$ be an nonempty set and let ${\cal P}(S)=\{A|A\subseteq S\}$ be the power set of $S$. We say that a nonempty set ${\cal F}\subset {\cal P}(S)$ is a filter on $S$ if it is closed with respect to intersection and supersets, i.e.
\begin{itemize}
\item[i)] $A,B\in {\cal F}\Rightarrow A\cap B \in {\cal F}$, 
\item[ii)] $A\subset B, ~A\in {\cal F},~ B\subset S\Rightarrow B \in {\cal F}$
\end{itemize}  
Note that $\oslash\in {\cal F}$ implies that for all $A\subseteq S$ $A\in {\cal F}$. A filter ${\cal F}$ is called proper iff $\oslash \notin {\cal F}$. A proper filter ${\cal F}$ on $S$ is called an ultrafilter if
\begin{itemize}
\item[iii)] for all $A\subset S$ either $A$ or $A^c$ is in ${\cal F}$, 
\end{itemize}
where $A^c=S\setminus A$ denotes the complement of $A$ in $S$.
An example of a proper filter which is not an ultrafilter is the filter of co-finite sets on an infinite set $S$, i.e. ${\cal F}^{co}:=\{A\subset S|S\setminus A \mbox{ is finite}\}$. If $B\subset S$ is a set then
$$
{\cal F}^B=\{A|S\supset A\supset B \}
$$
is called the principal ultrafilter generated by $B$. We call a filter principal if it is generated by a singleton, i.e. a set with one element. In that case we write ${\cal F}^x$ instead of ${\cal F}^{\{x\}}$ for simplicity of notation. For an ultrafilter  ${\cal F}$ this is equivalent to defining a filter as principal if it is generated by a finite set. For if ${\cal F}$ is generated by $B=\{x_1,\cdots,x_n\}$ assume that $\{x_i\}\notin {\cal F}$. Then $S\setminus \{x_i\}\in {\cal F}$. If this holds for all $x_i$, then
$$
\oslash =\cap_{i=1}^n S\setminus \{x_i\} \cap B\in {\cal F}. 
$$
Hence, since ${\cal F}$ is proper there is one $x\in B$ such that $\{x\}\in {\cal F}$. Hence
$$
{\cal F}:=\{A|A\supset \{x\}\}.
$$
To put it otherwise, if a principal ultrafilter is generated by a finite set, then it is generated by an element of that finite set.
We call an ultrafilter nonprincipal if it is not principal. But do there exist nonprincipal
ultrafilters? The answer is given in the next section.

\section{The Axiom of Choice implies the Existence of nonprincipal ultrafilters}
Let $S$ be a infinite set. 
First we say that a collection $H\subseteq P(S)$ has the finite intersection property (f.i.p.) 
if for any $n\in {\mathbb N}$
\begin{equation}
A_1,\cdots, A_n \in H\Rightarrow \cap_{i=1}^n A_i\neq \oslash .
\end{equation}  
Let
\begin{equation}
{\cal F}^H:=\left\lbrace A\subseteq S| A\supseteq B_1\cap\cdots \cap B_n \mbox{ for some $n$ and $B_i\in H$}\right\rbrace 
\end{equation}
be the filter generated by $H$.

\begin{lemma}
If $H$ has f.i.p. then ${\cal F}^H\subseteq P(S)$ can be extended to an ultrafilter on $S$.
\end{lemma}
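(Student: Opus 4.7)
The plan is a standard Zorn's lemma argument, which is the usual route to the ultrafilter lemma. First I would check that ${\cal F}^H$ is itself a \emph{proper} filter: closure under intersection and supersets is immediate from the definition, and properness follows directly from f.i.p., since any element of ${\cal F}^H$ contains a finite intersection of members of $H$, which is nonempty. So ${\cal F}^H$ is a proper filter extending $H$.

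Next I would introduce the poset
\begin{equation}
{\cal P} := \{{\cal G} \subseteq {\cal P}(S) \mid {\cal G} \text{ is a proper filter on } S,~ {\cal F}^H \subseteq {\cal G}\},
\end{equation}
ordered by inclusion, and verify Zorn's hypothesis. Given a chain $({\cal G}_\lambda)_{\lambda \in \Lambda}$ in ${\cal P}$, set ${\cal G} = \bigcup_\lambda {\cal G}_\lambda$. Upward closure is obvious, and for closure under intersection, if $A, B \in {\cal G}$, pick $\lambda, \mu$ with $A \in {\cal G}_\lambda$ and $B \in {\cal G}_\mu$; by linearity of the chain one of these filters contains the other, so $A \cap B$ lies in the larger one, hence in ${\cal G}$. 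Properness is preserved because $\oslash \notin {\cal G}_\lambda$ for any $\lambda$. Thus ${\cal G} \in {\cal P}$ is an upper bound, and by Zorn's lemma there exists a maximal element ${\cal U} \in {\cal P}$.

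Finally I would show that maximality forces ${\cal U}$ to be an ultrafilter. Fix $A \subseteq S$ and suppose for contradiction that neither $A$ nor $A^c$ lies in ${\cal U}$. The key observation is that the collection ${\cal U} \cup \{A\}$ has the finite intersection property: if some $B \in {\cal U}$ satisfied $A \cap B = \oslash$, then $B \subseteq A^c$, and by upward closure $A^c \in {\cal U}$, contrary to assumption. Hence $A \cap B \neq \oslash$ for every $B \in {\cal U}$, and since ${\cal U}$ is already closed under finite intersections, the collection
\begin{equation}
{\cal U}' := \{C \subseteq S \mid C \supseteq A \cap B \text{ for some } B \in {\cal U}\}
\end{equation}
is a proper filter containing ${\cal U}$ and also $A$, strictly extending ${\cal U}$, which contradicts maximality. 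Therefore $A \in {\cal U}$ or $A^c \in {\cal U}$, establishing condition (iii) of the ultrafilter definition.

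The only genuine obstacle is the dependence on Zorn's lemma (equivalently the axiom of choice), which is harmless here since the introduction already places the whole development inside ZFC; all other steps are bookkeeping about filters. I would present the existence of an upper bound for chains and the non-trivial extension step as the two verifications requiring care, and leave the elementary filter axioms to a one-line check.
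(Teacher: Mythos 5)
Your proposal is correct and follows essentially the same route as the paper: Zorn's lemma applied to the inclusion-ordered collection of filters extending ${\cal F}^H$, with unions of chains as upper bounds, followed by the standard argument that a maximal proper filter must contain $A$ or $A^c$. If anything, your version is slightly more careful than the paper's, since you explicitly restrict the poset to \emph{proper} filters (without which ${\cal P}(S)$ itself would be a trivial maximal element) and you verify properness of ${\cal F}^H$ from the f.i.p.\ at the outset rather than after the fact.
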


Proof. Consider the p.o. $({\cal P},\subseteq )$ of filters, where ${\cal P}=\{{\cal F}|{\cal F}\supseteq {\cal F}^H\}$.
If $L$ is a chain in $({\cal P},\subseteq )$, then $K:=\cup L\in {\cal P}$ (since $\cup L$ is a filter). Recall that
\begin{equation}
\cup L =\left\lbrace x|\exists y\in L : x\in y\right\rbrace .
\end{equation}
Hence, by Zorn's lemma $({\cal P},\subseteq )$ has a maximal element, which we name by $K$ again.
Note that $K$ is a maximal proper filter on $S$ (since $K$ has the finite intersection property $K$ is proper). 
We want to show that $K$ is an ultrafilter. If $K$ is not an ultrafilter, then there is $A\subseteq S$ such that neither $A$ nor $A^c$ is in $K$. Assume that $K\cup \{A\}$ does not have f.i.p., i.e. assume that for some $B_1,\cdots,B_n\in K$ we have $B_1\cap\cdots B_n\cap A=\oslash$. Then 
\begin{equation}
B_1\cap \cdots \cap B_n \cap A^c=B_1\cap \cdots B_n.
\end{equation}
Hence, $K\cup \{A^c\}$ has f.i.p.. Since $K$ is maximal $A^c\in K$. Hence $K$ is an ultrafilter. 
\begin{remark}
We present here the classical introduction to nonstandard analysis. Most but not all working mathematicians accept the axiom of choice (AC). Within the framework of the classical Zermelo-Fraenkel axiomatic system the AC leads to the famous paradoxes first discovered by Banach and Tarski. In that framework you can construct a subset $A$ of the two dimensional sphere $S^2$ such that for each natural number $n$ there are $n$ rotations $R_1,\cdots ,R_n$ on $S^2$ such that
\begin{equation}
S^2=R_1(A)\cup R_2(A)\cup \cdots \cup R_n(A).
\end{equation}
Hence it is not possible to assign a measure to $A$ in a reasonable way. Especially, a rotation invariant measure would have to assign the value $1$ $\frac{1}{2}$, $\frac{1}{3}$, etc. to the same set $A$! Therefore it is interesting to design a constructive nonstandard analysis. Another reason to consider constructive versions of nonstandard analysis is the fact that we have to impose the continuum hypothesis (CH)
\begin{equation}
\aleph_1 =2^{\aleph_0}
\end{equation}
in order to ensure the uniqueness of ${^*\mathbb R}$. However, CH is even independent of ZFC as was shown by G\"odel (who constructed a model of ZFC$\cup$CH 1939) and Cohen (who constructed a model of ZFC$\cup\neg$CH 1939).
\end{remark}
\begin{theorem}
If $S$ is an infinite set, then there exists a nonprincipal ultrafilter on $S$.
\end{theorem}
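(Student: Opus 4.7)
The plan is to apply the preceding lemma to the Fréchet filter (the filter of cofinite sets), whose richness will force any ultrafilter extending it to be nonprincipal. No new invocation of the axiom of choice is needed beyond what is already built into that lemma.

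First I would set
\begin{equation}
H := \left\{A \subseteq S \,\middle|\, S \setminus A \text{ is finite}\right\}.
\end{equation}
Because $S$ is infinite, the intersection $A_1 \cap \cdots \cap A_n$ of finitely many cofinite subsets of $S$ is again cofinite, hence in particular nonempty, so $H$ has the finite intersection property. By the preceding lemma, the filter ${\cal F}^H$ generated by $H$ extends to an ultrafilter $K$ on $S$.

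Next I would verify that $K$ is nonprincipal. Suppose for contradiction that $K$ is principal. By the discussion earlier in the section, a principal ultrafilter on $S$ has the form ${\cal F}^x = \{A \subseteq S \mid x \in A\}$ for some $x \in S$, so in particular $\{x\} \in K$. On the other hand $S \setminus \{x\}$ is cofinite, so $S \setminus \{x\} \in H \subseteq {\cal F}^H \subseteq K$. Since $K$ is closed under finite intersections, this forces $\{x\} \cap (S \setminus \{x\}) = \oslash \in K$, contradicting that $K$ is proper.

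There is really no substantive obstacle here: the heavy lifting (Zorn's lemma, i.e.\ the axiom of choice) has been absorbed into the earlier extension lemma, and the only design choice is to pick a starting filter large enough to block all principal ultrafilters simultaneously. The Fréchet filter is the canonical and minimal such choice, since by construction it contains $S \setminus \{x\}$ for every $x \in S$, thus ruling out every candidate $x$ for a generator.
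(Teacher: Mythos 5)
Your proof is correct and follows essentially the same route as the paper: extend the Fréchet filter of cofinite sets (which has the finite intersection property since $S$ is infinite) to an ultrafilter via the preceding lemma, then observe that containing every $S\setminus\{x\}$ rules out principality. The only cosmetic difference is that you derive $\oslash\in K$ as an explicit contradiction, whereas the paper simply notes $S\setminus\{x\}\in{\cal F}$ but $S\setminus\{x\}\notin{\cal F}^x$, so ${\cal F}\neq{\cal F}^x$ for every $x$.
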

Proof. Define the Frechet filter
\begin{equation}
{\cal F}^{co}=\{A\subseteq S|A^c \mbox{ is finite }\}.
\end{equation}
${\cal F}^{co}$ is a proper filter (not an ultrafilter, since there exist $S_1,S_2$ both infinite such that $S_1\cup S_2=S$). However, ${\cal F}^{Fr}$ has the f.i.p., so can be extended to a filter ${\cal F}\supset {\cal F}^{Fr}$ by the preceding lemma. We observe that ${\cal F}$ is nonprincipal, for if ${\cal F}^{x}$ is a principal filter generated by $\{x\}$, then $S\setminus \{x\}\notin {\cal F}^{x}$ but $S\setminus \{x\}\in {\cal F}$. Hence ${\cal F}\neq {\cal F}^{x}$ for all $x$. Hence ${\cal F}$ is not principal.

\section{Extended real numbers, infinitesimals and unlimited real numbers}

We take a nonprincipal ultrafilter on ${\mathbb N}$ and define
\begin{equation}
{^*\mathbb R}={\mathbb R}^{{\mathbb N}}/{\cal F}
\end{equation}
where for $x,y\in {\mathbb R}^{{\mathbb N}}$
\begin{equation}
x\sim y :\Leftrightarrow \{i|x_i=y_i\}\in {\cal F}.
\end{equation}
We embed ${\mathbb R}$ in ${^*\mathbb R}$ identifying $x\in {\mathbb R}$ with
$[(x,x,\cdots,x,\cdots)]$, i.e. $x\in {\mathbb R}$ is represented by the equivalence class of the real number sequence sequence which is constantly $x$.
We define extensions of standard operations
\begin{equation}
\begin{array}{ll}
+&:{^*\mathbb R}\times {^*\mathbb R}\rightarrow {^*\mathbb R}~~
\left[ x\right] + \left[ y\right] :=\left[ (x_i + y_i)\right], \\
\\

\cdot &:{^*\mathbb R}\times {^*\mathbb R}\rightarrow {^*\mathbb R}~~

\left[ x\right] \cdot \left[ y\right] :=\left[ (x_i \cdot y_i)\right] ,
\end{array}
\end{equation}
and
\begin{equation}
\begin{array}{ll}
\left[ x\right] ^{-1}:=\left[ (y_i)\right]~~
y_i=
\left \{\begin{array}{ll}
\frac{1}{x_i} \mbox{ ,if $x_i\neq 0$}\\
\\
0 \mbox{, ~else}
\end{array}\right..
\end{array}
\end{equation}
Furthermore, we define extensions of standard relations by
\begin{equation}
<\subseteq {^*\mathbb R}\times {^*\mathbb R},~~\left[ x\right]<\left[ y\right]\mbox{ iff } \{n|x_n\leq y_n\}\in {\cal F}.   
\end{equation}
Similarly for $<,>,=$. Furthermore, let $0^*=[(0,0\cdots,0,\cdots)]$ and

$1^*=[(1,1\cdots,1,\cdots)]$.
We identify $0^*, 1^*$ with $0,1$ respectively. We have
\begin{proposition}
$({^*\mathbb R},+,\cdot,-1,<)$ is an ordered field with zero $0$ and unit $1$.
\end{proposition}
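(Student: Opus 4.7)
The plan is to verify the ordered-field axioms for $({^*\mathbb R}, +, \cdot, {}^{-1}, <)$ by lifting pointwise from ${\mathbb R}$ and using the ultrafilter ${\cal F}$ to handle the quotient. The preliminary step is checking that the operations $+, \cdot, {}^{-1}$ and the relation $<$ are well-defined on equivalence classes, i.e.\ independent of the choice of representative. If $x\sim x'$ and $y\sim y'$, then $\{i:x_i=x'_i\}\cap \{i:y_i=y'_i\}\in {\cal F}$ because filters are closed under finite intersection, and on this set all the relevant pointwise values coincide; a superset argument then puts the coincidence of sums, products, and $<$-comparisons into ${\cal F}$.

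The commutative ring axioms — associativity and commutativity of $+$ and $\cdot$, distributivity, and the identities $[0^*]$ and $[1^*]$ — are all of the form "a certain equality holds on all of ${\mathbb N}$", which trivially lies in ${\cal F}$; so each axiom transfers directly from the corresponding axiom in ${\mathbb R}$. Additive inverses are obvious: $[(-x_i)]$ works. The one nontrivial verification is the existence of multiplicative inverses. Suppose $[x]\neq 0$. This means $\{i:x_i=0\}\notin {\cal F}$, so by the ultrafilter property its complement $S=\{i:x_i\neq 0\}$ lies in ${\cal F}$. The sequence $y$ defined in the statement satisfies $x_iy_i=1$ for every $i\in S$, hence $[x]\cdot [x]^{-1}=[1^*]=1$ in ${^*\mathbb R}$. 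Thus the strange-looking convention $y_i=0$ when $x_i=0$ is harmless: the "bad" indices form a set outside ${\cal F}$ and are invisible to the quotient.

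For the order axioms, irreflexivity and asymmetry follow from their pointwise analogues on a cofinite, hence filter, set. Transitivity of $<$ reduces to: if $\{i:x_i<y_i\},\{i:y_i<z_i\}\in{\cal F}$, their intersection is in ${\cal F}$ and implies $\{i:x_i<z_i\}\in {\cal F}$. The compatibility with $+$ and with multiplication by positive elements transfers pointwise in the same way. The main content is trichotomy: given $[x],[y]$, the three sets $\{i:x_i<y_i\}$, $\{i:x_i=y_i\}$, $\{i:x_i>y_i\}$ partition ${\mathbb N}$. Two applications of the ultrafilter property (either the first set is in ${\cal F}$ or its complement is, and then in the latter case either $\{x_i=y_i\}\in{\cal F}$ or $\{x_i>y_i\}\in{\cal F}$) show that exactly one of them lies in ${\cal F}$, which gives exactly one of $[x]<[y], [x]=[y], [x]>[y]$.

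The only obstacle worth flagging is the interaction between the defining convention of $[x]^{-1}$ and the equivalence relation; but as noted above, the ultrafilter property turns this into a one-line verification. Everything else is an instance of the general transfer slogan: a first-order equality or inequality between equivalence classes holds iff it holds pointwise on some set in ${\cal F}$, and closure of ${\cal F}$ under finite intersections plus the dichotomy $A\in{\cal F}$ or $A^c\in{\cal F}$ suffice to convert every field and order axiom of ${\mathbb R}$ into the corresponding axiom of ${^*\mathbb R}$.
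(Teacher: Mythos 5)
Your proof is correct and follows essentially the same route as the paper's: pointwise lifting of the axioms through the ultrafilter quotient, with well-definedness handled by finite intersections and supersets, and trichotomy handled by the dichotomy $A\in{\cal F}$ or $A^c\in{\cal F}$. If anything, your treatment is more complete than the paper's at one point: the paper dispatches all field laws as following ``immediately from congruency'' of the operations, whereas you correctly isolate the existence of multiplicative inverses --- the observation that $[x]\neq 0$ forces $\{i : x_i\neq 0\}\in{\cal F}$, so the convention $y_i=0$ at the bad indices is invisible in the quotient --- as the one field axiom that genuinely requires the ultrafilter property rather than mere well-definedness plus pointwise transfer.
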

Proof.  First we show that $({^*\mathbb R},<)$ is a total order, i.e. 
\begin{equation}
\forall x,y\in {^*\mathbb R}:~[x]<[y]\mbox{ or } [x]=[y]\mbox{ or }[x]>[y] 
\end{equation}
Note that
\begin{equation}
{\mathbb N}=\{i|x_i<y_i\}\cup\{i|x_i=y_i\}\cup\{i|x_i>y_i\}.
\end{equation}
Since ${\cal F}$ is an ultrafilter exactly one of the three sets is in ${\cal F}$.
For pedagogical reasons let us be a little more explicit at this point.
Let $A_{<}=\{i|x_i<y_i\}$, $A_{=}=\{i|x_i=y_i\}$, and $A_{>}=\{i|x_i>y_i\}$. If $A_{<}\in {\cal F}$ then
$A_{=}\cup A_{>}\notin {\cal F}$. Neither $A_{=}$ nor $A_{>}$ can then be in ${\cal F}$, because ${\cal F}$ is proper.
If $A_{<}\notin {\cal F}$, then $A_{=}\cup A_{>}\in {\cal F}$. If in addition to $A_{<}\notin {\cal F}$ $A_{=}\in {\cal F}$, then $A_{>}\notin {\cal F}$ (since ${\cal F}$ is proper). On the other hand, if  $A_{<}\notin {\cal F}$ and $A_{=}\notin {\cal F}$, then $A_{>}\in {\cal F}$ or $A_{=}\cup A_{<}\in {\cal F}$. However, the latter is impossible, since $A_{<}\notin {\cal F}$ implies $A_{<}^c\in {\cal F}$ and $A_{=}\notin {\cal F}$ implies $A_{=}^c\in {\cal F}$, so $A_{<}^c\cap A_{=}^c=(A_{<}\cup A_{=})^c\in {\cal F}$. Hence $A_{<}\cup A_{=}\notin {\cal F}$.
Hence, we have either $x < y$ (if $A_{<}\in {\cal F}$) or $x = y$ (if $A_{=}\in {\cal F}$) or $x>y$ (if $A_{>}\in {\cal F}$). 
Next the field laws have to be checked. These follow immediately from congruency of the operations $+$ and $\cdot$ and the inverse operation. By definition, this means that the operations are well defined.
We show this for $+$, i.e. we show that
\begin{equation}
[(x_i+y_i)]=[(x_i)]+[(y_i)].
\end{equation} 
Now, if $[(\tilde{x}_i)]=[(x_i)]$ and $[(\tilde{y}_i)]=[(y_i)]$ then
\begin{equation}
\{i|\tilde{x}_i=x_i\}\in {\cal F} \mbox{ and } \{i|\tilde{y}_i=y_i\}\in {\cal F}.
\end{equation}
Hence,
\begin{equation}
\{i|\tilde{x}_i=x_i \mbox{ $\&$ } \tilde{y}_i=y_i\}\in {\cal F},
\end{equation}
by the filter property of ${\cal F}$. Hence,
\begin{equation}
\{i|\tilde{x}_i=x_i \mbox{ and } \tilde{y}_i=y_i\}\subseteq \{i|\tilde{x}_i+\tilde{y}_i=x_i+y_i\}\in {\cal F}.  
\end{equation}
Similar for the other operations $\Box$

So what is an infinitesimal? We can say an infinitesimal is a number in  ${^*\mathbb R}$ which has modulus smaller than all standard real numbers. So let us define modulus first: for all $x=[(x_n)]\in {^*\mathbb R}$ we set
\begin{equation}
|x|=|[(x_n)]|:=[(|x_n|)].
\end{equation}
and we say that $x\in {^*\mathbb R}$ is an infinitesimal if 
\begin{equation}
\Phi_{\mbox{{\tiny inf}}}(x)\equiv \forall \epsilon \in {\mathbb R}(0<\epsilon \rightarrow  |x|<\epsilon )
\end{equation}
holds. 
Recall that we identified ${\mathbb R}$ with a subset of ${^*\mathbb R}$. A few remarks are in order.
First, each standard real number $x\in {\mathbb R}$ has a whole bunch of infinitesimals around it. They are called
the monad of $x$ and we have proved that they are linearly ordered. We define
\begin{equation}
\mbox{md}(x):=\{y|\Phi_{\mbox{{\tiny inf}}}(y-x)\}.
\end{equation}
Well, we have not defined what $y-x$ is, but it should be clear from the context that
\begin{equation}
\forall x,y \in {^*\mathbb R}: y-x=[(y_i)]-[(x_i)]=[(y_i-x_i)].
\end{equation}
Since ${^*\mathbb R}$ is linearly ordered we may call $x\in {^*\mathbb R}$ positive infinitesimal iff
$x>0$ and $\Phi(x)$. An element $x\in {^*\mathbb R}$ is called unlimited if for all
$r\in {\mathbb R}$ $r<|x|$.

\section{Further remarks}

The ultrafilter construction of the hyperreal numbers is unique if the continuum hypothesis holds. G\"odels proof of equiconsistency of $ZF$ and $ZF\cup AC\cup CH$ shows that this is not as bad a situation as one may think. Let us consider the question of cardinality of $R^*$ with and without this hypothesis.
Since ${\mathbb R}$ can be embedded into ${^*\mathbb R}$ and ${^*\mathbb R}$ consists of equivalence classes of elements of ${\mathbb R}^{\mathbb N}$, it is clear that 
\begin{equation}
\aleph_1\leq \mbox{card}{\mathbb R}= 2^{\aleph_0}\leq \mbox{card}({^*\mathbb R})\leq    \mbox{card}({\mathbb R}^{\mathbb N})=2^{2^{\aleph_0}}\geq \aleph_2
\end{equation}
holds. Next we show that for each standard number $x\in {\mathbb R}$ the cardinality of the monad of $x$ is at least the cardinality of the real number. 
We construct an injection from the standard real numbers into the monad of $0$ (the injections to $md(x)$ can be constructed in the same way). Consider representations of real numbers $r\in {\mathbb R}$ by decimal expansions
\begin{equation}
r=r_m r_{m-1}\cdots r_0.r_{-1}r_{-2}\cdots.
\end{equation}
Consider an enumeration of the prime numbers $p_0,p_1,p_2\cdots $ and the map
\begin{equation}
r\rightarrow [(r^+_n)]
\end{equation}
with
\begin{equation}
r^+_n:=p_0^{-r_m}p_1^{-r_{m-1}}\cdots p_n^{-r_{m-n}}.
\end{equation}
This is indeed an injection by uniqueness of prime factorization of natural numbers.
If the (general) continuum hypothesis holds then the cardinality of ${^*\mathbb R}$ is $\aleph_2$ and ${^*\mathbb R}$ is unique.

The set ${^*\mathbb R}$ which we constructed as equivalence classes of real sequences with respect to an nonprincipal ultrafilter ${\cal F}$ in the last Chapter is just a set of numbers. To do nonstandard analysis and nonstandard stochastic analysis later, we need to talk about sets, functions and relations. It is possible to extend the ultrafilter construction to this kind of objects. First, for $A\subset {\mathbb R}$ we may define
\begin{equation}
x\in A^* \mbox{ iff } \{n|x_n\in {\cal F}\}.
\end{equation} 
If $\tilde{x}=x \in A^*$, then $\{n|\tilde{x}_n=x_n\}\in {\cal F}$. Hence, 
\begin{equation}
{\cal F}\ni \{n|\tilde{x}_n=x_n \mbox{ and } x_n\in A\}\subseteq \{n|\tilde{x}_n\in A\}\in {\cal F}.
\end{equation}
Similar, a function $f:{\mathbb R}\rightarrow {\mathbb R}$ extends to a function
\begin{equation}
f^*:{^*\mathbb R}\rightarrow {^*\mathbb R},~~f^*(x)=[(f(x_n))].
\end{equation}
This is well defined, again. If $x:=[(x_n)]=[(y_n)]=:y$, then $\{n|x_n=y_n\}\in {\cal F}$. Hence, $\{n |x_n=y_n\}\subseteq \{n |f(x_n)=f(y_n)\}\in {\cal F}$. Hence $f(x)=f(y)$.
Similarly, a relation $S\subseteq ({\mathbb R})^m$ extends to
\begin{equation}
S^*\subseteq ({^*\mathbb R})^m~~(x_1,\cdots, x_m)\in S^* \mbox{ iff } \{n|(x_{1n},\cdots, x_{mn})\in S\}\in {\cal F}
\end{equation}
So we could reconstruct all properties of analysis on ${\mathbb R}$ which are expressable in first order logic
by the ultrafilter construction. However, the ultrafilter construction is at the very heart of Loos theorem which is essentially the transfer principle of nonstandard analysis. In order to state and proof the theorem of Loos we need some basics of mathematical logic, which we describe in the next Section.

\section{Elements of Mathematical Logic (preparation for the theorem of Loos)}
In order to fix notation for a proof of the thorem of Loos, we introduce basics of first order logic, i.e. the setup of formal first order languages, the logical axioms and rules of inference, and first order semantics. 

\subsection{Formal first order Languages}

The object first order language which we consider consists of an alphabet with

\begin{itemize}
\item[ai)] logical symbols: $\neg$ (not) $\wedge$ (and) $\forall$ (for all) $=$ (equality)

\item[aii)] variables: elements of ${\cal V}:=\left\lbrace v_i|i\in {\mathbb N} \right\rbrace$

\item[aiii)] relation symbols: elements of ${\cal R}:=\left\lbrace R_i|i\in I \right\rbrace$

\item[aiv)] function symbols: elements of  ${\cal F}_f:=\left\lbrace f_i|i\in J \right\rbrace$

\item[av)] constants: elements of ${\cal C}:=\left\lbrace c_k|k\in K \right\rbrace$

\item[avi)] auxiliary symbols: $)$~,~$($

\end{itemize}

Here, $I,J,K$ are arbitrary index sets. Next we build terms inductively as follows:

\begin{itemize}

\item[ti)] all $v_i\in {\cal V}$ and $c_k\in {\cal C}$ are terms

\item[tii)] if $t_1,\cdots, t_{\mu(j)}$ are terms, and $j\in J$, then $f_j\left(t_1,\cdots, t_{\mu(j)} \right)$ is a term.

\item[tiii)] No other concatenations of symbols are terms  

\end{itemize}

Here, $\mu : J\rightarrow {\mathbb N}\setminus\{0\}$ is a function which assigns to each index $j\in J$ the number of arguments of the function $f_j$. We denote the set of terms by $\mbox{Tm}$.

Next we build up formulas.

\begin{itemize}
\item[fi)] if $t_1,t_2$ are formulas, then $t_1=t_2$ is a formula

\item[fii)] if $t_1,\cdots, t_{\lambda(i)}$ are terms, then $R_i\left( t_1,\cdots, t_{\lambda(i)}\right) $
is a formula

\item[fiii)] if $\phi$ and $\psi$ are formulas, then so are $\neg \phi$, $(\phi \wedge \psi)$, and $\forall v_i \phi$ for some $v_i \in {\cal V}$. 

\item[fiv)] No other strings of symbols are formulas
\end{itemize}

Here $\lambda: I\rightarrow {\mathbb N}\setminus\{0\}$ is a function which assign to each $i\in I$ the number of arguments of the relation $R_i$.
For convenience and (partially) abusing these formal restrictions we fell free to use
\begin{itemize}
\item[coni)] the symbols $u,v,w,x,y,z$ with and without indices as variables

\item[conii)] $\phi, \psi,\alpha, \beta \gamma$ with and without indices as symbols for formulas.
\end{itemize}

Furthermore we use the abbreviations

\begin{itemize}
\item[abi)] $\left(\phi \vee \psi \right)$ stands for $\neg\left(\neg \phi \wedge \neg \psi \right)$

\item[abii)]  $\left(\phi \rightarrow \psi \right)$ stands for $\neg\left(\phi \wedge \neg \psi \right)$

\item[abiii)] $\left(\phi \rightarrow \psi \right)$ stands for  $\left(\phi \rightarrow \psi \right)\wedge
\left(\psi \rightarrow \phi \right)$

\item[abiv)] $\exists \phi$ stands for $\neg\forall \neg \phi$
\end{itemize}

We also use the following conventions which make the formulas more accessible to the human reader
\begin{itemize}
\item[ci)] $\wedge$ and $\vee$ have priority to $\rightarrow$ and $\leftrightarrow$

\item[cii)] $\neg$ has priority to $\vee$ and $\wedge$

\item[ciii)] $t_1\neq t_2$ stands for $\neg t_1=t_2$

\item[civ)] $\forall u,v,w\cdots$ stands for $\forall u\forall v\forall w$ and $\exists x,y,z\cdots$ stands for $\exists x\exists y\exists z$.

\item[cv)] we avoid the brackets if this does not lead to ambiguities, e.g. $\phi_1 \wedge \phi_2\wedge \phi_3$
stands for $\left( \left( \phi_1 \wedge \phi_2\right) \wedge \phi_3\right) $ etc.
\end{itemize}
Next we need the notion of free variables. We define the set of the free variables of a formula $\phi$
$\mbox{Free}(\phi)$ inductively. We set

\begin{itemize}
\item[Fi)] $\mbox{Free}(\phi)=\left\lbrace v\in {\cal V}|v \mbox{ occurs in } \phi  \right\rbrace $

\item[Fii)] $\mbox{Free}(\neg \phi)=\mbox{Free}(\phi)$

\item[Fiii)] $\mbox{Free}( \phi \wedge \psi)=\mbox{Free}(\phi)\cup \mbox{Free}(\psi)$

\item[Fiv)] $\mbox{Free}(\forall v \phi)=\mbox{Free}(\forall v \phi)\setminus \left\lbrace v\right\rbrace $.
\end{itemize}

Furthermore, we need the syntactic operation of substitution of a variable $v$ in a string of symbols
$\phi$ by a term $t$. Let $\phi{\Big |}^v_t$ be the string of symbols which is obtained, if all occurrences of $v$ are replaced by $t$. If the variable $v$ in a formula $\psi$ is in the domain of a quantifier $\forall u$ and $u$ occurs in the term $t$, then the variable $u$ becomes part of the domain of quantor $\forall u$.
If this does not happen for all variables in the term $t$, then $t$ is called free for $v$ in $\psi$.
The formal language is determined by the functions $\lambda$, $\mu$, and the set ${\cal C}$. We write
\begin{equation}
L=\left(\lambda , \mu , {\cal C} \right).
\end{equation}

\subsection{Logical axioms and rules of inference}
There are a lot if axiomatic systems which lead to logical first order completeness. We use a very simple form.
The logical axioms are

\begin{itemize}
\item[tl)] tautologies of propositional calculus

\item[ql)] quantifier rules
\begin{equation}
\begin{array}{ll}
\forall x \phi \rightarrow 
\phi {\big |}^x_t 
\mbox{ if $t$ is free for $x$ in $\phi$}\\
\\
\forall x\left(\phi \rightarrow \psi \right)\rightarrow \left(\phi \rightarrow \forall x \psi \right)
\mbox{ if $x\not\in \mbox{Free}(\phi)$}. 
\end{array}
\end{equation}

\item[il)] axioms of identity

\begin{equation}
\begin{array}{ll}
 x=x\\
\\
x=y\rightarrow (x=z\rightarrow y=z)\\
\\
x=y \rightarrow f_j(\cdots , x, \cdots )=f_j(\cdots , y, \cdots )\\
 (\mbox{ same place of $x$ and $y$ in argument $\mu_j$- tuple of $f$})\\
\\
x=y \rightarrow R_i(\cdots , x, \cdots )=R_i(\cdots , y, \cdots ) \\
(\mbox{ same place of $x$ and $y$ in argument $\lambda_i$-tuple of $R_i$})
\end{array}
\end{equation}
\end{itemize}

Next we have the rules of inference
\begin{equation}
\begin{array}{ll}
\phi\rightarrow \psi,~\phi\vdash\psi.
\end{array}
\end{equation}

Finally, the rule of generalization is
\begin{equation}
\begin{array}{ll}
\phi\vdash
\forall x \phi.
\end{array}
\end{equation}

\subsection{First order semantics}

Let $L=\left(\lambda , \mu, {\cal C} \right) $ be a formal language as stated above. A $L$-structure 
\begin{equation}
{\cal A}=\left( A,  \left(R_i^A \right)_{i\in I},(f_j^A)_{j\in J},(c_k^A)_{k\in {\cal C}} \right)
\end{equation}
is determined by the data of a set $A$ of individuals, a family of relations $R_i^A\subseteq A^{\lambda (i)}_{i\in I}$, a family of functions $f_j^A : A^{\mu (j)}\rightarrow A$, and a set of constants $\left\lbrace c_k^A\in A| k\in K\right\rbrace $. A map
\begin{equation}
e:{\cal V}\rightarrow A
\end{equation}
is called a valuation map. Furthermore, we define
\begin{equation}
\begin{array}{ll}
e{\big |^a_x} (v):=
\left \{\begin{array}{ll}
e(v) &\mbox{ ,if $v\neq x$}\\
\\
a &\mbox{, if $v=x$}
\end{array}\right..
\end{array}
\end{equation}
The value of a term $t\in \mbox{Tm}$ with valuation map $e$ is defined inductively.
We have
\begin{itemize}
\item[tei)] $v^A(e):=e(v)$
\item[teii)] $c_k^A(e):=c_k^A$
\item[teiii)] $f_j\left(t_1,\cdots , t_{\mu(j)} \right)^{A}(e)=f_j^A\left(t_1^A(e),\cdots ,t_{\mu(j)}^A(e) \right)$ \end{itemize}
The validity of a formula is a relation between the model ${\cal A}$, the formula $\phi$ and the valuation map $e$ which we write
\begin{equation}
{\cal A}\models \phi(e),
\end{equation}
and define inductively
\begin{itemize}
\item[mati)] ${\cal A} \models t_1=t_2 (e) \mbox{ iff } t_1^A(e)=t_2^A(e)$

\item[matii)] ${\cal A} \models R_i\left(t_1,\cdots, t_{\lambda(i)} \right)(e) $ iff
$R_i^A\left(t_1^A(e), \cdots t_{\lambda(i)}^A (e)\right) $.
\end{itemize}
For connectives and quantifiers the following rules hold
\begin{itemize}
\item[mfi)] ${\cal A} \models \neg \phi (e) \mbox{ iff } {\cal A} \not\models \phi(e)$

\item[mfii)] ${\cal A}\models (\phi \wedge \psi) (e)$ iff ${\cal A}\models \phi (e)$ and  ${\cal A}\models \psi (e)$

\item[mfiii)]   ${\cal A}\models \forall x \phi (e)$ iff  ${\cal A}\models \phi (e {\big |}^x_a)$ for all $a\in A$.
\end{itemize}

\subsection{The transfer principle}
Transfer principle
\begin{equation}
{^*\mathbb R}\models \phi^*(e) \mbox{ iff } {\mathbb R}\models \phi(e).
\end{equation}
$\phi^*$ is defined inductively.
For constants $c\in C$ and variables $x\in \mbox{Var}$ we have
\begin{equation}
c^*=c \mbox{ and } x^*=x.
\end{equation}
If $f$ is a k-ary function and $\tau_1,\cdots, \tau_k$ are terms, then
\begin{equation}
f(\tau_1,\cdots,\tau_k)^*=f^*(\tau_1^*,\cdots,\tau_k^*).
\end{equation}
Similarly, if $P$ is a k-ary relation and $\tau_1,\cdots,\tau_k$ are terms, then
\begin{equation}
P(\tau_1,\cdots,\tau_k)^*=P^*(\tau_1^*,\cdots,\tau_k^*).
\end{equation} 
This defines $\phi^*$ for all atomic formulas $\phi$. Furthermore, if
$\psi =\neg \phi$, then
\begin{equation}
\psi^*\equiv \neg \phi^*,
\end{equation}
and if $\psi =(\phi_1\rightarrow \phi_2)$, then
\begin{equation}
\psi^* =(\phi_1^*\rightarrow \phi_2^*).
\end{equation}
It is well known that $\{\neg,\rightarrow\}$ is a complete set of connectives for classical propositional calculus. So, the
$*$-transform for $\wedge$ etc. follow from the $*$-transform for $\neg$ and $\rightarrow$. Next, including quantifiers, if $\psi=\forall x\in A\phi$, then
\begin{equation}
\psi^*\equiv \forall x\in A^* \phi^*,
\end{equation} 
and the transform of  $\psi=\exists x\in A\phi$ is implied by $\exists \equiv \neg\forall\neg$.
The transfer principle is a consequence of the theorem of Loos.

\section{The theorem of Loos}
Let $S$ be an index set. For all $s\in S$ we consider the model
\begin{equation}
{\cal A}^s=\left(A^s,\mbox{R}^s,\mbox{F}^s,\mbox{C}^s \right), 
\end{equation} 
where $\mbox{R}^s$ is a set of relations, ,$\mbox{F}^s$ is a set of functions,$\mbox{C}^s$ is a set of constants. 
We saw that the axiom of choice implies for any system $D$ of subsets of a set $M$  which is closed under finite intersections the existence of a superset ${\cal F}\supset D$  which is a nonprincipal ultrafilter. Next we define the ultraproduct of the ${\cal A}^s$ with respect to ${\cal F}$.
Let us consider the set of sequences
\begin{equation}
\Pi_{s\in S} A^s=\left\lbrace \left( a^s\right)|a^s\in A^s \mbox{ for all } s\in S \right\rbrace. 
\end{equation}
Two sequences $\left( a_1^s\right) _{s\in S}$ and $\left( a_2^s\right) _{s\in S}$ are called equivalent with respect to ${\cal F}$ if they coincide on a set of ${\cal F}$, ie.
\begin{equation}
\left( a_1^s\right) _{s\in S} \sim_{\cal F}\left( a_2^s\right) _{s\in S}\mbox{ iff } \left\lbrace s|a_1^s=a_2^s\right\rbrace \in {\cal F}.  
\end{equation}   
Since ${\cal F}$ is a filter the relation $\sim_{\cal F}$ is an equivalence relation. For the sake of brevity we shall write $\left( a_1^s\right)$ instead $\left( a_1^s\right) _{s\in S}$ in the following.  
Now let 
\begin{equation}
A=\left\lbrace \left[ (a^s)\right] |(a^s)\in \Pi_{s\in S} A^s\right\rbrace , 
\end{equation}
where 
\begin{equation}
\left[ (a^s)\right]=\left\lbrace  (b^s)| (a^s)\sim_{\cal F}(b^s)\right\rbrace . 
\end{equation}
Next on $A$ we define the $R_i$ for $i\in I$, $F_j$ for $j\in J$, $C_k$ for $k\in K$ such that we can construct the structure   
\begin{equation}
{\cal A}=\left(A,\left( R_i\right) _{i\in I},\left(  F_j\right) _{j\in J},\left( c_k\right) _{k\in K} \right). 
\end{equation} 
We call ${\cal A}$ the ultraproduct of the structures ${\cal A}^s$ and write
\begin{equation}
{\cal A}=\Pi_{s\in S}{\cal A}^s/{\cal F}.
\end{equation}
The relations $R_i$ are defined as follows. We set
\begin{equation}
R_i\left(\left[ (a_1^s)\right],\cdots ,\left[ (a_{\lambda (i)}^s)\right]\right) \mbox{ iff }\left\lbrace s\in S|R_i\left( a_1^s,\cdots ,a_{\lambda (i)}^s\right)\right\rbrace \in {\cal F}.   
\end{equation} 
We define functions $F_j$ for $j\in J$ as follows:
\begin{equation}
f_j\left(\left[(a_1)^s \right],\cdots, \left[(a_{\mu(j)})^s \right]\right):=\left[ \left( f_j\left( a_1^s,\cdots , a_{\mu(j)}^s \right)\right)   \right]  
\end{equation}
Similar for constants $c_k$
\begin{equation}
c_k=\left[ \left(c_k^s \right) \right].
\end{equation}
In order to formulate the theorem of Loos we need the concept of valuation of sequences.
For all $s\in S$ let $e^s$ be a valuation maps of the variables in ${\cal A}^s$. We define
\begin{equation}
\begin{array}{ll}
\left[ \left( e^s\right) \right]: \mbox{Vbl}\rightarrow A ,\\
\\
\left[ \left( e^s\right) \right](v_{\nu})=\left[ e^s(v_{\nu}) \right] 
\end{array} 
\end{equation}
If $a=\left[\left(a^s \right)  \right]\in A$ then
\begin{equation}
\begin{array}{ll}
\left[ \left( e^s\right) \right]{\big |}^x_a=\left[ e^s{\big |}^x_a \right] 
\end{array} 
\end{equation}
We abbreviate $e=\left[ \left( e^s\right) \right]$. 
The theorem of Loos then says

\begin{theorem}
\begin{equation}
A\models \phi (e) \mbox{ iff } \{s|A^s\models \phi (e^s)\}\in {\cal F}
\end{equation}
\end{theorem}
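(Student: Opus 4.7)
The plan is to prove the theorem by induction on the complexity of the formula $\phi$, preceded by an auxiliary induction on the complexity of terms. Let $A=\prod_{s\in S}A^s/\mathcal{F}$ and write $[(a^s)]$ for equivalence classes. First I would establish the term-level lemma: for every term $t$ and valuation $e=[(e^s)]$, one has
\begin{equation}
t^A(e)=\bigl[\bigl(t^{A^s}(e^s)\bigr)_{s\in S}\bigr].
\end{equation}
This is by induction on the build-up of $t$ according to (ti)--(tiii): for variables use $e(v_\nu)=[e^s(v_\nu)]$, for constants use $c_k=[(c_k^s)]$, and for function applications use the componentwise definition of $F_j$ in the ultraproduct, which is designed precisely to make this identity tautological.

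Next I would treat atomic formulas. For $\phi\equiv(t_1=t_2)$, the assertion $A\models t_1=t_2(e)$ unfolds by the term lemma to equality of the classes $[(t_1^{A^s}(e^s))]$ and $[(t_2^{A^s}(e^s))]$, which by definition of $\sim_{\mathcal{F}}$ is equivalent to $\{s\mid t_1^{A^s}(e^s)=t_2^{A^s}(e^s)\}\in\mathcal{F}$, i.e.\ to $\{s\mid A^s\models t_1=t_2(e^s)\}\in\mathcal{F}$. For $\phi\equiv R_i(t_1,\dots,t_{\lambda(i)})$ the statement is literally the definition of $R_i$ on $A$.

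The inductive step for connectives uses the ultrafilter properties. For $\neg\phi$: by induction hypothesis $A\models\phi(e)$ iff $T_\phi:=\{s\mid A^s\models\phi(e^s)\}\in\mathcal{F}$; since $\mathcal{F}$ is an ultrafilter, $T_\phi\notin\mathcal{F}$ iff $T_\phi^c\in\mathcal{F}$, and $T_\phi^c$ is exactly the set of $s$ for which $A^s\models\neg\phi(e^s)$. For $(\phi\wedge\psi)$: use that a filter is closed under finite intersections and supersets, so $T_\phi\cap T_\psi\in\mathcal{F}$ iff both $T_\phi,T_\psi\in\mathcal{F}$.

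The main obstacle is the quantifier case $\psi\equiv\forall x\,\phi$. The direction $\{s\mid A^s\models\forall x\,\phi(e^s)\}\in\mathcal{F}\Rightarrow A\models\forall x\,\phi(e)$ is easy: given any $a=[(a^s)]\in A$, intersect the set above with the full set $S$ to conclude $\{s\mid A^s\models\phi(e^s|^x_{a^s})\}\in\mathcal{F}$, so by induction $A\models\phi(e|^x_a)$. The converse is where I would invoke the axiom of choice. Suppose the set $T:=\{s\mid A^s\models\forall x\,\phi(e^s)\}$ is \emph{not} in $\mathcal{F}$; then its complement $T^c=\{s\mid \exists b\in A^s:\;A^s\not\models\phi(e^s|^x_b)\}$ lies in $\mathcal{F}$. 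Using AC, pick for every $s\in T^c$ a witness $a^s\in A^s$ with $A^s\not\models\phi(e^s|^x_{a^s})$, and pick an arbitrary $a^s$ for $s\notin T^c$. Then $a=[(a^s)]\in A$ and, by the inductive hypothesis applied to $\phi$ with the valuation $e|^x_a=[(e^s|^x_{a^s})]$, one has $A\not\models\phi(e|^x_a)$, contradicting $A\models\forall x\,\phi(e)$. Hence $T\in\mathcal{F}$. Since all other connectives and $\exists$ are definable from $\neg,\wedge,\forall$, this completes the induction and yields the transfer principle as stated. The only non-trivial ingredient beyond ZF is the axiom of choice used to select the witnesses $a^s$ in the quantifier step.
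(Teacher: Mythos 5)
Your proposal is correct and follows essentially the same route as the paper: induction on formula complexity with a preliminary term lemma, the atomic case read off from the definitions of the ultraproduct relations and functions, negation and conjunction handled by the ultrafilter and filter closure properties, and the universal quantifier settled by choosing counterexample witnesses $a^s$ via the axiom of choice and deriving a contradiction. Your write-up is in fact somewhat more carefully organized than the paper's (which only gestures at the term lemma and states the witness-selection step tersely), but the mathematical content is identical.
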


Proof. The proof is by induction on the construction of the formulas $\phi$. If $\phi$ is a prime formula, then the equivalence follows by the equivalence of constants $c_k$ functions $f_j$ and relations $R_k$ as indicated before. Note also that
\begin{equation}
t^{{\cal A}}\left[(e^s)\right]= \left[ \left( t^{{\cal A}^s} e^s\right) \right]
\end{equation}
If $\phi\equiv \neg \psi$, then
\begin{equation}
\begin{array}{ll}
{\cal A} \models \neg \psi (e) &\mbox{ iff } {\cal A}\not\models \psi (e)\\
\\
&\mbox{ iff } {\cal A}\not\models \psi \left(  \left[ \left( e^s\right) \right] \right)\\
\\
&\mbox{ iff } \left\lbrace s|{\cal A}^s\models \psi \left( e^s  \right)\right\rbrace \not\in {\cal F}\\
\\
&\mbox{ iff } S\setminus \left\lbrace s|{\cal A}^s\not\models \psi \left( e^s  \right)\right\rbrace \in {\cal F}\\
\\
&\mbox{ iff }  \left\lbrace s|{\cal A}^s\not\models \psi \left( e^s  \right)\right\rbrace \in {\cal F}
\end{array}
\end{equation}
If $\phi\equiv  \psi \wedge \chi$, then
\begin{equation}
\begin{array}{ll}
{\cal A} \models (\psi \wedge \chi) (e) &\mbox{ iff } {\cal A}\models \psi (e) \mbox{ and }
\ {\cal A}\models \chi (e) \\
\\
&\mbox{ iff } \left\lbrace s|{\cal A}^s\models \psi \left( e^s  \right)\right\rbrace \in {\cal F}
\mbox{ and } \left\lbrace s|{\cal A}^s\models \chi \left( e^s  \right)\right\rbrace \in {\cal F}\\
\\
&\mbox{ iff } \left\lbrace s|{\cal A}^s\models \psi \left( e^s  \right)\right\rbrace \cap 
\left\lbrace s|{\cal A}^s\models \chi \left( e^s  \right)\right\rbrace \in {\cal F}\\
\\
&\mbox{ iff }  \left\lbrace s|{\cal A}^s\models (\psi \wedge \chi ) \left( e^s  \right)\right\rbrace \in {\cal F}
\end{array}
\end{equation}
Finally, if $\phi$ is of the form $\forall x \psi$, then 
\begin{equation}
\begin{array}{ll}
{\cal A} \models \phi (e) &\mbox{ iff } {\cal A}\models \psi \left( \left[ e^s{\big |}^x_a\right) \right)  \mbox{ for all } a\in A\\ 
\\
&\mbox{ iff } {\cal A}\models \psi \left( \left[ e^s{\big |}^x_{a^s}\right] \right)   \mbox{ for all } (a^s)\in \Pi_{s\in S}A^s \\
\\
&\mbox{ iff } \left\lbrace s|{\cal A}^s\models \psi \left(  e^s{\big |}^x_{a^s}\right) \right\rbrace \in {\cal F}\mbox{ for all } (a^s)\in \Pi_{s\in S}A^s \\
\\
&\mbox{ iff } \left\lbrace s|{\cal A}^s\models \psi \left(  e^s{\big |}^x_{a^s}\right) \mbox{ for all } a^s\in A^s \right\rbrace \in {\cal F} \\
\\
&\mbox{ iff } \left\lbrace s|{\cal A}^s\models \forall x\psi \left(  e^s\right)  \right\rbrace \in {\cal F} 
\end{array}
\end{equation}
Here, the fourth step can be justified as follows.
Let 
\begin{equation}
U=\left\lbrace s|{\cal A}^s\models \psi \left(  e^s{\big |}^x_{a^s}\right) \mbox{ for all } a^s\in A^s \right\rbrace 
\end{equation}
For any sequence $(b^s)
\in \Pi_{s\in S} A^s $ we have then
On the other hand, if $U\not\in {\cal F}$, then $s\setminus U\in {\cal F}$.
\begin{equation}
V= \left\lbrace s|{\cal A}^s\not\models \psi \left(  e^s{\big |}^x_{a^s}\right) \mbox{ for some } a^s\in A^s \right\rbrace \in {\cal F}
\end{equation}
Then define a sequence $(b^s)$ where $b^s$ equals some $a^s$ with 
${\cal A}^s\not\models \psi \left(  e^s{\big |}^x_{a^s}\right)$ if $s\in V$ and some $a^s\in A^s$ else.
Then 
\begin{equation}
V\subseteq\left\lbrace s|{\cal A}^s\not\models \psi \left(  e^s{\big |}^x_{b^s}\right) \right\rbrace \not\in {\cal F},
\end{equation}
and since $V\in {\cal F}$ we have 
\begin{equation}
\left\lbrace s|{\cal A}^s\models \psi \left(  e^s{\big |}^x_{b^s}\right) \right\rbrace \not\in {\cal F},
\end{equation}
which is impossible.

\section{Introduction to enlarged universes and measure theory }
Stochastic analysis is a part of measure theory and measures assign numbers to sets. Nonstandard stochastic analysis is applied Loeb measure theory and Loeb measures assign values to internal sets. So we have to agree on what is a set 
(which axioms of set theory we take for granted). Next a good framework of measure theory are universes and we have to define enlargements of universes, which we do by the ultrafilter construction. Then we define internal sets within the enlarged universe. This is the program. Some remarks are in order.

The foundations of mathematics can be displayed either in the classical framework of set theory (especially the Zermelo-Fraenkel system) or within the framework of category theory (and here especially in the framework of topos theory). The categorical approach and especially the topos theoretic approach is the most advanced view on the foundations since it allows us to represent different point of views on mathematics under one uniform roof. E.g. the logic of topos theory is intrinsically intuitionist but sheafications lead to classical mathematics in an elegant way. However, since topos theory is not that popular we consider set theory in this introduction. Since the subject of nonstandard stochastic analysis is a challenge in itself we do not want to make things more difficult by using the categorical approach. However we do not use exactly ZFC. The reason is that restricted quantifiers are enough for our purposes. Recall the ontological premise of Quine ("On what there is"): "to be is to be value of a variable". Variables are exactly what we quantify over. So what exists in restricted ZFC (sometimes also called "bounded Zermelo") are elements of sets.

\section{Bounded Zermelo}     

Bounded Zermelo is ZFC with quantification restricted to existing sets. The language of bounded Zermelo is a normal set theoretic language, with the exception of the restricted quantification rule, i.e. for any formula $\phi(x)$ with the free variable $x$ and sets $a,b$  
\begin{equation}
\exists x\in a \phi(x), ~ \forall x \in b\phi(x)
\end{equation}
are formulas.
Denote the usual set theoretic language with bounded quantifiers by ${\cal L}_R$. We list  the axioms of restricted ZFC (RZFC in symbols).

\begin{itemize}

\item[(E)] (Extensionality) $y=x$ iff, for all $z$, $z\in x$ iff $z\in y$.

\item[(RC)] (Restricted Comprehension) If $\phi(x)$ is an ${\cal L}_R$-formula (with quantifiers restricted)
and free variable $x$ and $a$ is a set, then there exists a set $b$ with $x\in b$ iff $x\in a$ and $\phi(x)$.

\item[(NS)] (Null Set) There exists a set $\oslash$ such that for all $x$ $x\notin \oslash$.

\item[(P)] (Pair) For all $x$ and $y$ there exists $z$ with $u\in z$ iff $u=x$ or $u=y$.

\item[(U)] (Union) For all $x$ there exists $y$ with $z\in y$ iff there exists $w$ with $z\in w\in y$.

\item[(PS)] (Power Set) for all $x$ there exists $y$ with $z\in y$ iff $z\subseteq x$. The set $y$ will be denoted
by $P(x)$ 

\item[(F)] (Foundation) For all $x\neq \oslash $ there exists a set $y\in x$ with $y\cap x=\oslash$.

\item[(I)] (Axiom of Infinity) There exists a set ${\mathbb N}$ such that $\oslash \in {\mathbb N}$ and $x\in {\mathbb N}$ implies  $x\cup \{x\}\in {\mathbb N}$ 

\item[(AC)] (Axiom of Choice) if $I\neq \oslash$ is an index set and for all $i\in I$ $X_i\neq \oslash$, then
$\Pi_{i\in I}X_i\neq \oslash$.

\end{itemize}
Bounded Zermelo is well known to be equiconsistent to a model of the first order theory of well-pointed topoi. However, as explained before, we are not dealing with the topos-theoretic view here. 

\section{Mathematics within first order logic}

The goal of the present chapter is to extend the construction of chapter 1 to universes of sets. This is a basic step in order to set up nonstandard measure theory. We shall consider to types of extensions of universes (systems of sets with certain closure properties). The first extension is called enlargement of universes and is  one is an axiomatic enlargement of an universe $U$ and is defined as an embedding
\begin{equation}
U\stackrel{*}{\rightarrow} U'
\end{equation}
which satisfies some axioms. The second one is a generalization of the ultrafilter construction of chapter 1 from the special set ${\mathbb R}$ to universes. It is clear that the construction of a nonprincipal proper ultrafilter of chapter 1 can be extended to any index set $I$. So we seek for some object of the form
\begin{equation}
"U^*:=U^I/{\cal F}".
\end{equation}
But before we step into that subject we consider first a more fundamental question.
In the preceding chapter we formulated first order logic. and proved the theorem of Loos in the case of real and hyperreal numbers. However if we look at propositions of the working mathematicians we see that quantifiers are applied to sets in general which is covered only by higher order logic. Moreover, it is well-known that elementary structures of mathematics like the natural number system ${\mathbb N}$ cannot be characterized up to isomorphism by a first order axiomatic system. The Peano axiomatic system of the natural numbers is characterized therefore in second order logic.
\begin{equation}
\begin{array}{ll}
\mbox{P1 }~\forall x \neg \sigma(x)=0\\
\\
\mbox{P2 }~\forall x \forall y \left(  \sigma(x)=\sigma(y) \rightarrow x=y\right)\\ 
\\
\mbox{P3 }~\forall X \left( X(0)\wedge \forall x \left(X(x)\rightarrow X(\sigma (x)) \right)\rightarrow \forall X(y) \right) 
\end{array} 
\end{equation}
(Here sets $X$ are identified with univariate relations).

The axioms (P1)-(P3) characterize the standard arithmetic structure $\left({\mathbb N},\cdot ,+,0,1 \right) $ up to isomorphism, and it is a result by elementary model theory that this structure cannot be formalized by a finite first order axiomatic system. However, in this section we shall show that nevertheless mathematics can be essentially expressed as a first order theory within a set theoretic axiomatic system expressed by a first order language with one binary relation symbol $\in $.
The latter proposition is empirical of course inasmuch it refers to mathematics as
a subject of propositions expressed by past present and future mathematicians.

We exemplify it (and make it plausible) by formalization of a theorem by Dedekind

\begin{theorem}
Two Peano structures are isomorphic.
\end{theorem}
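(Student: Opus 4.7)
The plan is to construct, for two Peano structures $(\mathbb{N}_1, \sigma_1, 0_1)$ and $(\mathbb{N}_2, \sigma_2, 0_2)$ each satisfying (P1)--(P3), a bijection $f\colon \mathbb{N}_1 \to \mathbb{N}_2$ with $f(0_1)=0_2$ and $f\circ\sigma_1 = \sigma_2\circ f$, and then show its inverse has the symmetric property, so that the two structures are identified as Peano algebras. The natural definition is by primitive recursion:
\begin{equation}
f(0_1) := 0_2, \qquad f(\sigma_1(x)) := \sigma_2(f(x)).
\end{equation}
Symmetrically, define $g\colon \mathbb{N}_2\to\mathbb{N}_1$ by $g(0_2):=0_1$ and $g(\sigma_2(y)):=\sigma_1(g(y))$.

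Before anything else, I would prove the auxiliary recursion principle: given any set $A$, element $a\in A$ and function $h\colon A\to A$, there exists a unique function $F\colon \mathbb{N}_1\to A$ with $F(0_1)=a$ and $F(\sigma_1(x))=h(F(x))$. This is the genuinely non-trivial step. The idea is to consider the set $\mathcal{S}$ of relations $R\subseteq \mathbb{N}_1\times A$ containing the pair $(0_1,a)$ and closed under $(x,y)\in R \Rightarrow (\sigma_1(x),h(y))\in R$; let $F$ be the intersection of all such $R$. One then shows by (P3) applied in $\mathbb{N}_1$ that $F$ is single-valued (using (P1), (P2) to separate the base case from the successor case), and total. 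Uniqueness follows again by (P3) comparing any two such functions on the induction set $\{x : F_1(x)=F_2(x)\}$.

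With the recursion principle in hand, the existence and uniqueness of $f$ and $g$ is immediate. I would then show $g\circ f = \mathrm{id}_{\mathbb{N}_1}$ by (P3): the set $X=\{x\in\mathbb{N}_1 : g(f(x))=x\}$ contains $0_1$ because $g(f(0_1))=g(0_2)=0_1$, and if $x\in X$ then $g(f(\sigma_1(x)))=g(\sigma_2(f(x)))=\sigma_1(g(f(x)))=\sigma_1(x)$, so $\sigma_1(x)\in X$; by induction $X=\mathbb{N}_1$. Symmetrically $f\circ g=\mathrm{id}_{\mathbb{N}_2}$, so $f$ is a bijection. Preservation of the successor is built into the definition, and preservation of the constant is the base clause; so $f$ is an isomorphism of Peano structures.

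The main obstacle is precisely the recursion theorem: defining a function ``by recursion'' is not itself a basic operation of the language but must be derived from the Peano axioms together with the set-theoretic machinery (in our context, from RZFC). Everything else --- injectivity, surjectivity, compatibility with $\sigma$ --- reduces to straightforward applications of (P3) once the function $f$ is known to exist. It is worth noting that the appeal to (P3) is essential: in first-order logic alone, the induction schema is weaker and Dedekind's categoricity theorem fails, which is the very point the surrounding text is building toward.
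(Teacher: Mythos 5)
Your proposal is a correct and essentially complete outline of the standard proof of Dedekind's categoricity theorem: derive the recursion principle from (P3) together with the ambient set theory (the minimal-relation construction, with (P1) securing single-valuedness at $0_1$ and (P2) at successors), use it to obtain the canonical maps $f$ and $g$, and then verify $g\circ f=\mathrm{id}_{\mathbb{N}_1}$ and $f\circ g=\mathrm{id}_{\mathbb{N}_2}$ by induction. You have also correctly located the one genuinely non-trivial step (the recursion theorem) and the reason the result is second-order in character. There is, however, nothing in the paper to compare this against: the paper states the theorem only as a motivating example for its discussion of formalizing mathematics in a first-order set-theoretic language, announces that the theorem can be formalized and proved in the system $S=(U,M,\in)$, and then moves on without ever carrying out the proof (the proof that follows the statement in the source is for the separate non-characterizability theorem, via L\"owenheim--Skolem). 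So your argument stands on its own as the proof the paper omits; the only point of contact with the paper's agenda is that your appeal to the recursion theorem and to (P3) would, in the paper's intended framework, have to be expressed inside RZFC with the restricted-quantifier language, which is exactly the formalization exercise the paper gestures at but does not complete.
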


which cannot prima facie formalized on a first order level, since

\begin{theorem}
No model ${\cal A}$ with infinite set $A$ is characterizable in first order logic up to isomorphism (i.e. $\Delta$-elementary).
\end{theorem}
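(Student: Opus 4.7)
The strategy is to produce, from the infinite model $\mathcal{A}$, a second model $\mathcal{A}^*$ satisfying exactly the same first-order sentences as $\mathcal{A}$ but not isomorphic to it. This suffices to refute $\Delta$-elementarity: any candidate first-order axiomatization $\Sigma$ characterizing $\mathcal{A}$ up to isomorphism must lie in $\mathrm{Th}(\mathcal{A})$, since $\mathcal{A}\models\Sigma$, and so every model of $\mathrm{Th}(\mathcal{A})$ would be a model of $\Sigma$ and hence isomorphic to $\mathcal{A}$. Exhibiting one $\mathcal{A}^*\models\mathrm{Th}(\mathcal{A})$ with $\mathcal{A}^*\not\cong\mathcal{A}$ therefore gives the contradiction.

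The construction uses the ultrapower machinery developed above. I would take an infinite index set $I$ with $|I|\geq|A|$ and a nonprincipal ultrafilter $\mathcal{F}$ on $I$, whose existence was established via Zorn's lemma in an earlier section, and form $\mathcal{A}^*=\Pi_{i\in I}\mathcal{A}/\mathcal{F}$ with coordinatewise relations, functions, and constants. The theorem of Loos proved above gives at once $\mathcal{A}^*\models\phi$ iff $\mathcal{A}\models\phi$ for every first-order sentence $\phi$, so $\mathcal{A}^*\equiv\mathcal{A}$. It then remains to certify $\mathcal{A}^*\not\cong\mathcal{A}$, and the natural invariant is cardinality. In the base case $|A|=\aleph_0$, $I=\mathbb{N}$, nonprincipality of $\mathcal{F}$ lets one encode continuum-many $\mathcal{F}$-inequivalent sequences from $A$, so $|\mathcal{A}^*|=2^{\aleph_0}>|A|$; for general infinite $|A|=\kappa$ one either enlarges $I$ and uses a sufficiently regular nonprincipal ultrafilter to force $|\mathcal{A}^*|>\kappa$, or alternatively derives the compactness theorem from Loos and applies it to $\mathrm{Th}(\mathcal{A})$ together with $\kappa^{+}$ fresh constants $c_\alpha$ and axioms $\{c_\alpha\neq c_\beta : \alpha\neq\beta\}$, each finite fragment of which is satisfied in $\mathcal{A}$ itself by infiniteness of $A$.

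The main obstacle is this last cardinality step: elementary equivalence alone does not preclude isomorphism (every structure is elementarily equivalent to itself), so one genuinely has to show that nonprincipality of $\mathcal{F}$ forces new elements into the quotient. All the logical content already sits in the ultrafilter existence theorem and in Loos's theorem; the remaining work is the cardinal arithmetic bookkeeping and, in the general case, the short derivation of compactness from Loos needed to run the fresh-constants argument uniformly across all infinite $|A|$.
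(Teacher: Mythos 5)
Your proposal is correct, and the reduction you set up at the start (any axiomatization $\Sigma$ of the isomorphism class lies inside $\mathrm{Th}(\mathcal{A})$, so one elementarily equivalent non-isomorphic model kills $\Delta$-elementarity) is exactly the right frame. Where you diverge from the paper is in how the witness is produced. The paper simply invokes the upward L\"owenheim--Skolem theorem as a cited black box: the candidate set $\Phi$ has an infinite model, hence a model of cardinality $\mathrm{card}(P(A)) > \mathrm{card}(A)$, which cannot be isomorphic to $\mathcal{A}$. You instead build the witness from the machinery the paper actually develops, taking the ultrapower $\Pi_{i\in I}\mathcal{A}/\mathcal{F}$ and getting elementary equivalence for free from the theorem of Loos. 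This buys self-containedness, but at a price you correctly identify: a plain nonprincipal ultrafilter (which is all the paper's Zorn's-lemma argument delivers) only guarantees $2^{\aleph_0}$ many classes, which need not exceed $|A|$ when $A$ is uncountable, so for general $\kappa$ you must either import the existence of regular ultrafilters on $\kappa$ --- a genuine additional ZFC fact not proved in the paper --- or fall back on your compactness-plus-fresh-constants argument, which is of course just the standard proof of upward L\"owenheim--Skolem and so lands you back at the paper's key lemma, now proved rather than cited. Either way the argument closes; the paper's version is shorter, yours is more honest about where the cardinality blow-up actually comes from.
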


Proof. Let ${\cal A}$ be a model with infinite set $A$ and let $\Phi$ be a set of sentences interpretable in ${\cal A}$. Consider the set of models
\begin{equation}
\left\lbrace {\cal B}|{\cal B}\cong {\cal A}\right\rbrace 
\end{equation}
The set $\Phi$ has an infinite model. Next recall
\begin{theorem}
(L\"owenheim $\&$ Skolem) Let $\Phi$ be a set of expressions which can be satisfied by an infinite model. For any set $A$ there is a model of $\Phi$ which has at least as much elements as $A$ (in the sense that there is an infinite model of $\Phi$ with set $B$ such that there is an injective function from $A$ to $B$). 
\end{theorem}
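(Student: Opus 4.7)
The plan is to reduce the statement to a compactness-style argument and then execute that argument using the ultraproduct construction behind the theorem of Loos, which is available to us. First I would extend the language $\mathcal{L}$ in which $\Phi$ is written by adjoining a fresh constant symbol $c_a$ for every element $a\in A$, obtaining a larger language $\mathcal{L}_A$. Then I would consider the set of $\mathcal{L}_A$-sentences
\begin{equation}
\Phi' := \Phi \cup \{\, c_a \neq c_{a'} : a,a'\in A,\ a\neq a'\,\}.
\end{equation}
Any model $\mathcal{B}'$ of $\Phi'$ yields, by reduct to $\mathcal{L}$, a model $\mathcal{B}$ of $\Phi$, and the map $a\mapsto c_a^{\mathcal{B}'}$ is by construction an injection from $A$ into the underlying set of $\mathcal{B}$. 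So the whole theorem reduces to producing one model of $\Phi'$.

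Next I would verify that $\Phi'$ is finitely satisfiable. Any finite $\Phi_0\subseteq \Phi'$ mentions only finitely many of the new constants $c_{a_1},\ldots,c_{a_n}$, and the conditions on them just force pairwise distinctness. Since the hypothesized model $\mathcal{M}\models \Phi$ is infinite, we can pick $n$ distinct elements of $|\mathcal{M}|$ to interpret $c_{a_1},\ldots,c_{a_n}$ (and interpret the remaining constants of $\mathcal{L}_A$ arbitrarily). This gives an expansion $\mathcal{M}_{\Phi_0}$ of $\mathcal{M}$ satisfying $\Phi_0$.

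To assemble a model of all of $\Phi'$, I would invoke the ultraproduct machinery from the section on Loos's theorem. Let $I$ be the set of finite subsets of $\Phi'$ and, for each $\Phi_0\in I$, let $\mathcal{M}_{\Phi_0}$ be an expansion as above. The sets $D_{\Phi_0}:=\{\Phi_1\in I:\Phi_0\subseteq \Phi_1\}$ have the finite intersection property, since $D_{\Phi_0}\cap D_{\Phi_1}\supseteq D_{\Phi_0\cup \Phi_1}$, so by the lemma on f.i.p. collections they extend to a nonprincipal ultrafilter $\mathcal{F}$ on $I$. Form the ultraproduct $\mathcal{B}:=\prod_{\Phi_0\in I}\mathcal{M}_{\Phi_0}/\mathcal{F}$. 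For any single sentence $\varphi\in \Phi'$, the set $\{\Phi_0\in I:\mathcal{M}_{\Phi_0}\models \varphi\}$ contains $D_{\{\varphi\}}$ and therefore belongs to $\mathcal{F}$; by the theorem of Loos, $\mathcal{B}\models \varphi$. Hence $\mathcal{B}\models \Phi'$, its $\mathcal{L}$-reduct models $\Phi$, and the injection $a\mapsto c_a^{\mathcal{B}}$ embeds $A$ into its carrier.

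The hard part, and the only place where real set-theoretic content enters, is the existence of the ultrafilter $\mathcal{F}$ on the index set $I$: this uses the axiom of choice (via Zorn's lemma, exactly as in the lemma already proved) and is unavoidable in any proof of upward Löwenheim–Skolem. A secondary subtlety is that the carrier of $\mathcal{B}$ may be strictly larger than $A$, but this is harmless since the statement only demands an injection of $A$ into $|\mathcal{B}|$, not a bijection. A cleanliness point worth flagging in the write-up is that the constants $c_a$ must be genuinely fresh, i.e.\ not already present in $\mathcal{L}$; this is automatic once we pass to $\mathcal{L}_A$.
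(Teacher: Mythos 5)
Your argument is correct, but note that the paper does not actually prove this theorem: it is merely "recalled" as a known fact inside the proof of the preceding non-characterizability result, so there is no proof in the text to compare yours against step by step. What you supply is the standard upward L\"owenheim--Skolem argument: adjoin fresh constants $c_a$ for $a\in A$, add the pairwise-distinctness axioms to form $\Phi'$, observe that $\Phi'$ is finitely satisfiable because the hypothesized model of $\Phi$ is infinite, and then realize all of $\Phi'$ simultaneously in an ultraproduct indexed by the finite subsets of $\Phi'$ with respect to an ultrafilter extending the sets $D_{\Phi_0}$. This compactness-via-ultraproducts route is a particularly good fit here, since every ingredient you invoke is already developed in the paper: the lemma that a collection with the finite intersection property extends to an ultrafilter, and the theorem of Loos for transferring satisfaction of each individual sentence from an $\mathcal{F}$-large set of factors to the ultraproduct. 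Two small points for the write-up: nonprincipality of $\mathcal{F}$ is not actually needed (Loos's theorem holds for any ultrafilter, and in the nontrivial case where $\Phi'$ is infinite the ultrafilter you construct is automatically nonprincipal, since the intersection of all the $D_{\Phi_0}$ is empty); and you should state explicitly that the members of $\Phi$ are sentences, so that Loos's theorem applies without any bookkeeping about valuations of free variables.
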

Theefore $\Phi$ has a model ${\cal B}$ with set $B$ which has the cardinality of he power set of $A$. Since $\mbox{card}(A)<\mbox{card}(P(A))$ ${\cal B}$ is then not isomorphic to ${cal A}$. Therefore, Peano structures cannot be characterized by first order theories. Nevertheless we can avoid this consequence somehow.
We shall show: in a formal first order language with the set of symbols $S=(U,M,\in)$ with the univariate relation symbols $U$ and $M$ with the interpretation " . is urelement" and ". is set" respectively, and the relation symbol $x\in y$ ("$x$ is element of $y$") it is possible
\begin{itemize}
\item[i)] to formalize the theorem of Dedekind in the first order language $S$

\item[ii)] to prove the theorem of Dedekind in a first order system.
\end{itemize}

These facts (and similar experiences) let us believe that first order logic is sufficient in order to express mathematics.

Next we shall make this more precise (plausible) by formulating an axiomatic system in the language $L_S$.

\begin{equation}
\begin{array}{lll}

(A0)~~\forall x (Ux \vee Mx)\\
\mbox{"every object is either urelement or set"}\\
\\
(A1)~~\forall x \neg (Ux \wedge Mx)\\
\mbox{"no object is urelement and set at the same time"}\\
\\
(A2)~~\forall x\forall  \left( \left( (Mx \wedge M \wedge \forall z \left( z\in x \leftrightarrow z\in y\right) \right) \rightarrow x=y \right)\\
 \mbox{"two sets are equal if the contain the same elements"}\\
\\
(A3)~~\forall x \forall y \exists z \left( Mz \wedge \forall u \left( u\in z \leftrightarrow (u=x \vee u=) \right)  \right) \\
 \mbox{"two sets are equal if the contain the same elements"}
\end{array}
\end{equation}
In order to make further formalization more readable we introduce the abbreviations

\begin{equation}
\begin{array}{lll}

(\mbox{$\subseteq $})~~x\subseteq y\equiv Mx \wedge My \wedge \forall z (z\in x \rightarrow z\in y)\\
\mbox{"$x$ is subset of $y$ "}\\
\\
(\mbox{$GPzxy$})~~GPzxy \equiv Mz \wedge \forall u ( u \in z \leftrightarrow (Mu \wedge \left(\forall v( v\in u \leftrightarrow v=x \right)  \\
\hspace{3.5cm}\vee  \forall v \left(v\in u \leftrightarrow (v=x \vee v= y) \right)))\\ 
\mbox{"$z$ is the ordered pair of $x$ and $y$"}\\
\\
(\mbox{$GTuxyz$})~~\forall x\forall  \left( \left( (Mx \wedge M \wedge \forall z \left( z\in x \leftrightarrow z\in y\right) \right) \rightarrow x=y \right)\\
 \mbox{"$u=(x,y,z)$"}\\
\\
($\mbox{GPEuxy}$)~~\forall x \forall y \exists z \left( Mz \wedge \forall u \left( u\in z \leftrightarrow (u=x \vee u=) \right)  \right) \\
 \mbox{"$(x,y)\in u$"}\\
\\
($\mbox{Fu}$) Fu\equiv Mu \wedge  \forall z(z\in u \rightarrow \exists x \exists y \mbox{GP}zxy)\wedge \forall x\forall y \forall y' \left( (Euxy \wedge Euxy')\rightarrow y=y')\right)\\
\mbox{"$u$ is function}\\
\\
$\mbox{Dfv}$\equiv Ff \wedge Mv \wedge \forall x \left(x\in v \leftrightarrow \exists y Efxy\right)\\
\mbox{"$v$ is domain of the function $uf$"}\\
\\
($\mbox{Buv}$) Buv \equiv Fu \wedge Mv \wedge \forall y (y\in v \leftrightarrow \exists x Euxy)\\
"\mbox{$v$ is image of the function $u$.}"   
\end{array}
\end{equation}

Next we use the preceding abbreviations in order to formalize Peano structures.

\section{Universes and enlargements}

The axiomatic system RZFC tells us what sets are. In set theory sometimes you take entities for granted (as Kronecker expresses that the natural numbers are given by god, and the rest is constructed by mankind). Similar in nonstandard theory we tend to consider some set as the set of urelements. In ZFC people realised that urelements are superfluous. However, when we talk about universes over ..., we include urelements as convenient. We denote sets by capital letters and elements which are either sets or urelements by small letters. E.g. in $A\in {\mathbb U}$ $A$ is a set, while in $a\in {\mathbb U}$ a may be a set or an urelement. A universe is a set with certain properties. First if $A\in {\mathbb U}$ is a set, then we want all elements of $a$ to be present in ${\mathbb U}$, i.e.
\begin{equation}\label{U1}
a\in A\in {\mathbb U}~~\Rightarrow~~a\in {\mathbb U}. 
\end{equation}
Any set ${\mathbb U}$ which satisfies \ref{U1} is called transitive. Furthermore, in a universe we want to have with a set $A\in {\mathbb U}$ its transitive closure $\mbox{Tr}(A)\in {\mathbb U}$. Here $\mbox{Tr}(A)$ is the smallest transitive set that contains $A$. If $A$ is transitive itself, then $A=\mbox{Tr}(A)$, of course. We require:
\begin{equation}\label{U2}
\mbox{if $A\in {\mathbb U}$, then there ex. a transitive set $B\in {\mathbb U}$
 with $A\subset B\subset {\mathbb U}$}.
\end{equation}
Finally we require (and are allowed to by RZFC) that
\begin{equation}
\begin{array}{ll}
\mbox{ if $a,b\in U$, then $\{a,b\}\in {\mathbb U}$}\\
\\
\mbox{ if $A,B\in {\mathbb U}$ are sets, then $A\cup B\in {\mathbb U}$}\\
\\
\mbox{ if $A\in {\mathbb U}$ is a set, then $P(A)\in {\mathbb U}$}
\end{array}
\end{equation}
If the universe contains a set ${\mathbb S}$ such that the members of $S$ are individuals in the sense that
\begin{equation}
\forall x \in {\mathbb S}\left[x\neq \oslash \wedge (\forall y\in {\mathbb U}(y\notin x)\right]. 
\end{equation}
The next step of course is to show that universes exist. They are realized by superstructures.
Let ${\mathbb S}$ be a set. We define a series cumulative power set by
\begin{equation}
\begin{array}{ll}
{\mathbb U}_0\left( {\mathbb S}\right)= {\mathbb S},\\
\\
{\mathbb U}_{n+1}\left( {\mathbb S}\right)= {\mathbb U}_{n}\left( {\mathbb S}\right)\cup P\left( {\mathbb U}_{n}\left( {\mathbb S}\right)\right). 
\end{array}
\end{equation}
Then it is easy to check that
\begin{equation}
{\mathbb U}\left( {\mathbb S}\right):=\bigcup_{n\in {\mathbb N}}{\mathbb U}_{n}\left( {\mathbb S}\right)
\end{equation}
is a universe. If the set ${\mathbb S}$ is known from the context, then we . The language ${\cal L}_R$ with quantification restricted to the sets of the universe ${\mathbb U}$ is denoted ${\cal L}_R^{\mathbb U}$. Next, a nonstandard framework for a set ${\mathbb S}$ comprises a universe ${\mathbb U}$ over ${\mathbb S}$ and a map
\begin{equation}
{\mathbb U} \stackrel{*}{\rightarrow}{\mathbb U}', 
\end{equation}
which satisfies
\begin{equation}
\begin{array}{ll}
a^*=a \mbox{ for $a\in {\mathbb S}$},\\
\\
\oslash^*=\oslash,\\
\\
\mbox{ the ${\cal L}_R^{\mathbb U}$-sentence $\phi$ is true iff $\phi^*$ is true}.
\end{array}
\end{equation}  
Such a nonstandard framework is called an enlargement if the following condition is satisfied:
\begin{equation}
\begin{array}{llc}
\mbox{if $A\in {\mathbb U}$ is a collection of sets with the finite intersection property, then there}\\
\mbox{exists an element $z\in {\mathbb U}'$ such that}\\
\\
\hspace{4.5cm}z\in \bigcap\{Z^*| Z\in A \}. 
\end{array}
\end{equation}
\begin{example}If ${\mathbb U}$ is a universe on ${\mathbb R}$ and $A$ is the set of intervals 
\begin{equation}
\{(0,r)|r>0\}
\end{equation}
 then the latter set satisfies the finite intersection property. Then the enlargement principle tells us that there exists a positive infinitesimals, i.e. there exists
\begin{equation}
b\in \cap \{ (0,r)^*|r>0\}=\mbox{'set of positive infinitesimals'}.
\end{equation}
\end{example}

\begin{example} Consider a universe $U$ on ${\mathbb N}$ and an enlargement 
$U\stackrel{*}{\rightarrow} U'$.
The set 
\begin{equation}
A=\{{\mathbb N}_{\geq n}| n\in {\mathbb N}\}
\end{equation}
with ${\mathbb N}_{\geq n}:=\{m\in {\mathbb N}| m\geq n\}$
satisfies the finite intersection property. Then the enlargement principle tells us that
\begin{equation}
\exists b: b\in \cap\{ {\mathbb N}_{\geq n}^*|n \in {\mathbb N}\}={\mathbb N}^*\setminus{\mathbb N},
\end{equation}
i.e. there is a set of unlimited numbers.
\end{example}

The question now is whether enlargements really exist. This can be shown with the ultrafilter construction. Two types of sets are of special interest for us:
the first type are the internal sets:
\begin{equation}
\mbox{ $a\in {\mathbb U}$ is internal if $a\in A^*$ for some $A\in {\mathbb U}$};
\end{equation}
the second type are the hyperfinite sets: let
\begin{equation}
P_F(A)=\{B\subseteq A| B \mbox{ is finite }\}.
\end{equation}
Then the hyperfinite sets are the members of $P_F(A)^*\in {\mathbb U}'$. 
    
\section{The ultrafilter construction of enlargements}

Let $I$ be an infinite set and let ${\cal F}$ be a nonprincipal ultrafilter on $I$. Let ${\mathbb S}$ be a set and let let ${\mathbb U}$ be a universe over ${\mathbb S}$. To $a\in {\mathbb U}$ assign $a_I \in {\mathbb U}^I$, the function with constant value $a$. This way we embed ${\mathbb U}$ in a larger universe.
Similarly as in the ultrafilter construction on ${\mathbb R}$ we have to consider equivalence of elements with respect to the ultrafilter, this time of functions $f,g\in {\mathbb U}^I$. We say
\begin{equation}
\begin{array}{ll}
f\sim g \mbox{ iff } \left\lbrace i|f(i)=g(i)\right\rbrace \in {\cal F}\\
\\
f\in g \sim f' \in g' \mbox{ iff } \left\lbrace i|f(i) \in g(i) \& f'(i) \in g'(i)\right\rbrace \in {\cal F}
\end{array}
\end{equation}
We denote equivalence classes b $\left[ .\right] $ as before and define 
\begin{equation}
\begin{array}{ll}
{\cal W}_n:=\left\lbrace f\in {\mathbb U}_n^I| \left\lbrace i| f(i)\in {\mathbb U}_n\right\rbrace \in {\cal F}\right\rbrace .
\end{array}
\end{equation}
\begin{equation}
\begin{array}{ll}
{\cal W}:=\cup_{n\in {\mathbb N}} {\cal W}_n
\end{array}
\end{equation}
Now for $f\in {\cal W}_0$ let $[f]:=\left\lbrace [h]|h\in {\cal W}_0| \right\rbrace$, and let
\begin{equation}
{\mathbb Y}=\left\lbrace [f]|f\in {\cal W}_0\right\rbrace 
\end{equation}
This defines 
\begin{equation}
{\mathbb U}_0\left({\mathbb Y} \right)={\mathbb Y} 
\end{equation}
Inductively, having defined ${\mathbb U}_n\left({\mathbb Y} \right) $, for $f\in {\cal W}_{n+1}\setminus {\cal W}_n$ define
\begin{equation}
[f]=\left\lbrace [h]|h\in {\cal W}_n \mbox{ and } \left\lbrace i|h(i)\in f(i) \right\rbrace \in {\cal F} \right\rbrace  
\end{equation}
Then
\begin{equation}
{\mathbb U}_{n+1}\left({\mathbb Y} \right)={\mathbb U}_{n}\left({\mathbb Y} \right)\cup\left\lbrace [f]| f\in {\cal W}_{n+1}\setminus {\cal W}_n\right\rbrace ,
\end{equation}
and
\begin{equation}
{\mathbb U}\left({\mathbb Y} \right)=\cup_{n\in {\mathbb N}}{\mathbb U}_{n}\left({\mathbb Y} \right).
\end{equation}
Now, ${\mathbb U}\left({\mathbb Y} \right)$ is the ultrafilter-enlargement we had looked for.
For each $f,g\in {\cal W}$ itis easy to see that
\begin{equation}
\begin{array}{ll}
 [f]\in [g] \mbox{ iff } \left\lbrace i|f(i)\in g(i)\right\rbrace \in {\cal F},~
 [f]= [g] \mbox{ iff } \left\lbrace i|f(i)= g(i)\right\rbrace \in {\cal F}
\end{array} 
 \end{equation}
The map
\begin{equation}
\begin{array}{ll}
*: {\mathbb U}({\mathbb X})\rightarrow  {\mathbb U}({\mathbb Y}),~
  a \rightarrow  a^*=\left[a_I \right]
\end{array} 
\end{equation}
is an embedding of the universe ${\mathbb U}({\mathbb X})$ in the universe ${\mathbb U}({\mathbb Y})$, and we observe that
\begin{equation}
\oslash^*=\oslash, \mbox{ and } {\mathbb X}^*={\mathbb Y}. 
\end{equation}
Since ${\cal F}$ is an ultrafilter, we know that the enlargement ${\mathbb U}({\mathbb Y})$ has nonstandard members.
Let $L_{{\mathbb U}({\mathbb X})}$ and $L_{{\mathbb U}({\mathbb Y})}$ be the formal languages of the respective universes. Denote the model of the ultrafilter-enlargement by ${\cal U}_{{\mathbb Y}}=\left( {\mathbb U}\left({\mathbb Y} \right), \in \right) $ the model of the original universe by ${\cal U}_{{\mathbb X}}=\left( {\mathbb U}\left({\mathbb X} \right), \in \right) $. Then we get the following version of the theorem of Loos.
\begin{theorem}
For any $L_{{\mathbb U}({\mathbb X})}$-formula $\phi(x_1,\cdots,x_m)$ and $f_1,\cdots f_m\in {\cal W}$
\begin{equation}
{\cal U}_{{\mathbb Y}}\models \phi \left( e{\big |}^{x_1,\cdots ,x_m}_{[f_1],\cdots ,[f_m]}\right)
\mbox{iff} \left\lbrace i| {\cal U}_{{\mathbb X }}\models\phi\left(e^i{\big |}^{x_1,\cdots ,x_m}_{ f_1(i),\cdots , f_m(i)} \right)  \right\rbrace \in {\cal F}
\end{equation}
\end{theorem}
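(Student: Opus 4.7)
The plan is to proceed by induction on the syntactic complexity of the restricted $L_{{\mathbb U}({\mathbb X})}$-formula $\phi$, mirroring the structure of the proof given earlier for the first-order version of Ło\'s's theorem, but paying extra attention to the stratification by the sets ${\cal W}_n$ and to the bounded quantifiers of the language ${\cal L}_R$.

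For the base case I would handle the two kinds of atomic formulas separately. For $t_1 = t_2$ and $t_1 \in t_2$ with the terms evaluated at $[f_1],\ldots,[f_m]$, the equivalences
\begin{equation}
[f] = [g] \iff \{i \mid f(i) = g(i)\} \in {\cal F}, \qquad [f]\in[g] \iff \{i\mid f(i)\in g(i)\}\in {\cal F}
\end{equation}
were built into the very definition of the equivalence classes and of the relation $\in$ on ${\mathbb U}({\mathbb Y})$, so the base case is immediate. The only bookkeeping issue is to verify that $f_j\in {\cal W}$ is represented at some level $n$, which follows because ${\cal W}=\bigcup_n {\cal W}_n$.

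For the inductive step I would treat negation and conjunction first, since these reduce to pure ultrafilter bookkeeping. For $\phi = \neg\psi$ the equivalence uses that ${\cal F}$ is an ultrafilter: a subset of $I$ is not in ${\cal F}$ iff its complement is. For $\phi = \psi\wedge\chi$ the equivalence uses that ${\cal F}$ is closed under finite intersections and supersets. Both calculations are word-for-word parallel to the corresponding steps already written out in the earlier proof of Theorem~8.

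The main obstacle is the quantifier case, and specifically the bounded quantifier case $\phi \equiv \forall x \in y\, \psi$. Write $y$ as $[g]$ with $g\in {\cal W}_{n+1}$ say. Then ${\cal U}_{\mathbb Y}\models \phi(e\bigl|^{\cdots}_{[g],\cdots})$ means that for every $[h]\in {\mathbb U}({\mathbb Y})$ with $[h]\in[g]$ the induction hypothesis applies to $\psi$. By the definition of the enlargement, such $[h]$ are exactly the classes arising from functions $h\in {\cal W}_n$ with $\{i\mid h(i)\in g(i)\}\in {\cal F}$. The delicate direction is to go from ``for each fixed $(a^i)_{i\in I}\in \prod_i g(i)$ one has $\{i\mid {\cal U}_{\mathbb X}\models \psi(e^i\bigl|^x_{a^i})\}\in{\cal F}$'' to ``the set $\{i\mid {\cal U}_{\mathbb X}\models \forall x\in g(i)\,\psi(e^i)\}$ lies in ${\cal F}$.'' For this one argues by contradiction exactly as in the last step of the proof of Theorem~8: if the latter set were not in ${\cal F}$, its complement $V$ is in ${\cal F}$, and the axiom of choice furnishes a selector $b(i)\in g(i)$ on $V$ with ${\cal U}_{\mathbb X}\not\models\psi(e^i\bigl|^x_{b(i)})$. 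Extending $b$ arbitrarily off $V$ produces a function $b\in {\cal W}_n$ whose class $[b]$ witnesses failure of $\phi$ in ${\mathbb U}({\mathbb Y})$, contradicting the hypothesis. The existential quantifier is dispatched either by abbreviation $\exists \equiv \neg\forall\neg$ or directly by a similar selection argument. Once this is in place the inductive proof closes, and the theorem follows.
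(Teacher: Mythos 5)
Your proof is correct and is essentially the argument the paper intends: the paper states this version of the theorem of Loos without a separate proof, leaving it as the adaptation of the induction-on-formulas argument given earlier for general ultraproducts, and your treatment --- atomic cases read off from the definitions of $=$ and $\in$ on the classes $[f]$, ultrafilter bookkeeping for $\neg$ and $\wedge$, and the choice-function contradiction argument for the bounded universal quantifier, with the check that the selector lands in some ${\cal W}_n$ --- is exactly that argument transported to the stratified ultrapower ${\cal W}=\bigcup_{n}{\cal W}_n$. No gaps.
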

Let $U$ be a universe (which contains the real numbers as individuals) and let $U\stackrel{*}{\rightarrow}U'$ be an enlargement. For $A\in U$ and let
\begin{equation}
P_F(A)=\left\lbrace b\subseteq A| B \mbox{ is finite }\right\rbrace. 
\end{equation}
$P_F(A)^*$ are called hyperfinite subsets of $A$. As an example, consider $P_F({\mathbb N}$ and the $L_U$-sentence
\begin{equation}
\forall n\in {\mathbb N} \exists A \in P_F({\mathbb N}) \forall m \in {\mathbb N} \left[m\in A \leftrightarrow m\leq n \right], 
\end{equation}
i.e. the sentence which has the meaning that for each natural number $n\in {\mathbb N}$ there is a set
$A=\{1,\cdots, n\}$ in 
$P_F({\mathbb N})$. The transfer sentence is
\begin{equation}
\forall n\in {\mathbb N}^* \exists A  \in P_F({\mathbb N})^* \forall m \in {\mathbb N}^( \left[m\in A \leftrightarrow m\leq n \right]. 
\end{equation}
Hence, for all $n\in {\mathbb N}^*$
\begin{equation}
A=\{1,\cdots , n\}\in P_F({\mathbb N})^*. 
\end{equation}
Note that $n$ can be infinite, i.e. $n\in {\mathbb N}^*\setminus {\mathbb N}$.
We prove
\begin{theorem}
$A$ is hyperfinite iff there exists $n\in {\mathbb N}^*$ and an internal bijection
\begin{equation}
f:\{1,\cdots ,n\}\rightarrow A.
\end{equation}
\end{theorem}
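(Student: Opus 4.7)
The plan is to derive the equivalence as two applications of transfer to the standard first-order characterization of finite sets. Recall that in ${\mathbb U}$ one has, for every set $X$ and every subset $A\subseteq X$, the equivalence: $A\in P_F(X)$ if and only if there exist $n\in {\mathbb N}$ and a bijection $f:\{1,\ldots,n\}\to A$. This is provable in RZFC because $P_F(X)$ is by definition the set of finite subsets of $X$, and the statement that $f$ is a bijection unfolds into a bounded formula in the set-theoretic language built from $\in$ together with the finite-tuple and function abbreviations already introduced in the excerpt. All quantifiers involved are restricted to sets of ${\mathbb U}$, so the whole characterization is an $L_R^{\mathbb U}$-sentence, which is exactly the fragment to which Loos's theorem applies.

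For the forward direction, assume $A$ is hyperfinite, i.e.\ $A\in P_F(X)^*$ for some $X\in {\mathbb U}$. By the transfer principle, the $*$-transform of the above equivalence holds in ${\mathbb U}'$, and its left-to-right implication delivers an $n\in {\mathbb N}^*$ and an internal $f$ such that $f:\{1,\ldots,n\}\to A$ is an internal bijection. The internality of $f$ is automatic here: the existential quantifier in the transferred sentence ranges precisely over internal objects, which is why transfer gives internal bijections rather than merely external ones.

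For the reverse direction, suppose an internal bijection $f:\{1,\ldots,n\}\to A$ exists for some $n\in {\mathbb N}^*$. Since $f$ is internal, so is its image $A$, and both $f$ and $A$ live inside $X^*$ for a suitable $X\in {\mathbb U}$ obtained from the ambient enlargement. Applying the right-to-left implication of the $*$-transformed equivalence to this $X$ and $A$ places $A$ into $P_F(X)^*$, i.e.\ $A$ is hyperfinite. The main obstacle is really just bookkeeping: one must check that the characterization of finiteness can genuinely be written as a bounded $L_R^{\mathbb U}$-formula (so that Loos applies), and one must verify that the notion of \emph{internal bijection} used in the statement of the theorem coincides with what transfer produces from $\exists f\,(\ldots)$ in the enlarged universe.
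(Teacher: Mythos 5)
Your proposal is correct and follows essentially the same route as the paper: both directions are obtained by transferring the standard first-order characterization of finiteness (membership in $P_F(X)$ is equivalent to the existence of $n\in{\mathbb N}$ and a bijection from $\{1,\dots,n\}$), with the observation that the transferred existential quantifiers range over internal objects. The only cosmetic difference is that you transfer a single biconditional while the paper transfers two separate implications, one for each direction.
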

Here a function $f:A\rightarrow B$ is called internal if the set $\mbox{graph}(f)\subseteq A\times B$
is internal.

Proof. Consider a $L_{{\mathbb U}}$-formula
\begin{equation}
\phi\left(X,Y,n,f \right)
\end{equation}
which expresses that $f: X\rightarrow Y$ with
$X=\left\lbrace m\in {\mathbb N}|m\leq n \right\rbrace $
is a bijection. 
Then the $L_{{\mathbb U}}$-sentence
 \begin{equation}
 \psi\equiv \forall Y\in P_F(B) \exists n\in {\mathbb N} \exists f\in P({\mathbb N}\times B)\exists X\in
 P({\mathbb N})\phi\left( X,Y,n,f\right) 
\end{equation}
asserts that for all $Y\in P_F(B)$ there is a number $n$ and a bijection between $X=\left\lbrace 1,\cdots ,n\right\rbrace$  and $Y$- a sentence which is true. The sentence $\psi^*$ is true by transfer. So if $B\in {\mathbb U}$ and  
 $A\in P_F(B)^*$ then the claim follows from the truth of $\psi^*$.
For the converse suppose that there is an internal bijection $f:X=\left\lbrace 1,\cdots n\right\rbrace \rightarrow A$ for some $n\in {\mathbb N}^*$. Then $A$ is internal, because it is the range of an internal function. We want to show that $A$ is hyperfinite. First we observe that
\begin{equation}
\exists X\in P\left({\mathbb N} \right)^* \phi(X,A,n,f)
\end{equation}
is true. Hence the claim that $A$ is hyperfinite follows from transfer of the true $L_{{\mathbb U}}$-sentence
\begin{equation}
\forall Y\in B \exists n\in {\mathbb N} \exists f\in P\left( {\mathbb N}\times A\right) \exists X\in P\left({\mathbb N} \right)\left(  \phi(X,A,n,f)\rightarrow Y\in P_F(A)\right) .
\end{equation}

\section{Nonstandard proof of the central limit theorem}

\begin{theorem}
Let the random variables $X_n$ be binomially distributed with parameters $p$ and $q=1-p$. If for
$n\in {\mathbb N}^*\setminus {\mathbb N}$ is $m\in {\mathbb N}^*$ is such that 
\begin{equation}
x=\frac{m-np}{\sqrt{npq}}
\end{equation}
is in a standard interval $\left[a,b\right]$ , then
\begin{equation}
\sqrt{npq}B_n(m)\approx \frac{1}{\sqrt{2\pi}}e^{-\frac{1}{2}x^2}.
\end{equation} 
\end{theorem}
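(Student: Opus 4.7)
The plan is to prove this local limit theorem (de Moivre--Laplace) by applying Stirling's formula in the nonstandard framework and performing a careful second-order Taylor expansion of the resulting logarithm. First I would write
$$B_n(m)=\binom{n}{m}p^m q^{n-m}=\frac{n!}{m!\,(n-m)!}\,p^m q^{n-m},$$
and, using that Stirling's formula $k!=\sqrt{2\pi k}\,(k/e)^k(1+O(1/k))$ transfers to $^*{\mathbb N}$, apply it to $n!$, $m!$, $(n-m)!$. Since $x\in[a,b]$ is limited and $n$ is infinite, both $m=np+x\sqrt{npq}$ and $n-m=nq-x\sqrt{npq}$ are infinite hyperintegers, so the relative error in each application is infinitesimal and multiplies to give an infinitesimal relative error overall.

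After the substitution and cancellation, the prefactor becomes
$$\sqrt{\frac{n}{2\pi\,m(n-m)}}\;\cdot\;\frac{n^n\,p^m q^{n-m}}{m^m(n-m)^{n-m}}.$$
The first factor is handled by noting that $m/n\approx p$ and $(n-m)/n\approx q$, so that $\sqrt{npq}\cdot\sqrt{n/(2\pi m(n-m))}\approx 1/\sqrt{2\pi}$. The whole theorem therefore reduces to showing that the logarithm of the second factor is $\approx -x^2/2$. Writing $y:=m/n$, $r:=y-p=x\sqrt{pq/n}$ (an infinitesimal since $x$ is limited and $n$ is infinite), one has
$$\log\!\frac{n^n p^m q^{n-m}}{m^m(n-m)^{n-m}}=-n\bigl[y\log(y/p)+(1-y)\log((1-y)/q)\bigr].$$
Here $y/p=1+r/p$ and $(1-y)/q=1-r/q$, and I would Taylor expand $\log(1\pm u)$ about $u=0$. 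A direct computation shows the linear terms cancel and the quadratic terms combine (using $p+q=1$) to give
$$y\log(y/p)+(1-y)\log((1-y)/q)=\frac{r^2}{2pq}+R,$$
with a cubic remainder $R$. Multiplying by $-n$ and substituting $r^2=x^2 pq/n$ produces the desired leading term $-x^2/2$.

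The main obstacle --- and the only genuinely delicate point --- is to show that $nR$ is infinitesimal inside the nonstandard framework. The standard remainder bound for the Taylor series of $\log(1\pm u)$, applied for $|u|$ less than a fixed positive constant (which holds here since $|r|$ is infinitesimal, hence below any standard bound on $p$ or $q$), gives $|R|\le C|r|^3$ for some standard constant $C$ depending only on $p,q$; this bound transfers to $^*{\mathbb R}$. Then $n|R|\le C n|r|^3=C|x|^3(pq)^{3/2}/\sqrt{n}$, which is infinitesimal since $|x|$ is bounded by $\max(|a|,|b|)$ (a standard real) and $\sqrt{n}$ is unlimited.

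Combining the three pieces, $\sqrt{npq}\,B_n(m)\approx (1/\sqrt{2\pi})\,\exp(-x^2/2)$, which is the claim. The argument is essentially the classical de Moivre--Laplace proof, but the nonstandard formulation lets one replace the usual $\varepsilon$--$\delta$ uniformity statements on $[a,b]$ by the single infinitesimal relation $\approx$, with the transfer principle doing the bookkeeping.
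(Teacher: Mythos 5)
Your proposal is correct and follows essentially the same route as the paper: Stirling's formula applied to the hyperfinite factorials $n!$, $m!$, $(n-m)!$, followed by a second-order Taylor expansion of the logarithm of $\frac{n^n p^m q^{n-m}}{m^m (n-m)^{n-m}}$ to obtain $-x^2/2$. If anything, your treatment is slightly more careful than the paper's, since you explicitly verify that $n$ times the cubic Taylor remainder is infinitesimal, a step the paper passes over with an unjustified $\approx$.
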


Proof. Consider the classical formula of Stirling for integer $s\in {\mathbb N}$
\begin{equation}
s!=\sqrt{2\pi s}s^se^{-s}\exp\left(\Theta_s\right), 
\end{equation}
where $|\Theta_s| \leq \frac{1}{12 s}$. For hyperfinite $s\in {\mathbb N}^*\setminus {\mathbb N}$ the formula becomes
\begin{equation}\label{nonst}
s!\approx\sqrt{2\pi s}s^se^{-s}. 
\end{equation}
Since $x$ is bounded and $m=np+x\sqrt{npq}$, and $n-m=nq+x\sqrt{npq}$, we have $\frac{1}{m}\approx 0$
and $\frac{1}{n-m}\approx 0$ for $n,m\in{\mathbb N}^*\setminus {\mathbb N}$. For such $n,m$ \eqref{nonst} gives
\begin{equation}
B_n(m)=\frac{n!}{m!(n-m)!}p^mq^{n-m}=\sqrt{\frac{n}{2\pi m(n-m)}}\frac{n^n}{m^m}\frac{p^nq^{n-m}}{m^m(n-m)^{n-m}}
\end{equation}
Now, consider
\begin{equation}
\begin{array}{ll}
\ln \frac{n^n}{m^m}\frac{p^nq^{n-m}}{m^m(n-m)^{n-m}}=\ln \left(\frac{np}{m} \right)^m +\ln \left(\frac{nq}{n-m} \right)^{n-m}\\
\\
-\left(np+x\sqrt{npq} \right)\ln\left(1+x\sqrt{\frac{q}{np}} \right)-\left(nx-x\sqrt{npq} \right)\ln\left(1-x\sqrt{\frac{q}{np}} \right)\\
\\
\approx -\sqrt{npq x}-x^2 q+x^2 \frac{q}{2}+\sqrt{npq x}-x^2p+x^2\frac{p}{2}=-\frac{x^2}{2}.   
\end{array} 
\end{equation}

\section{General Topologies}
First we define general topologies on $X\in U$ where $U$ is some standard universe in the enlargement $U\stackrel{*}{\rightarrow}U'$
\begin{definition}
Let $\oslash \neq X\in U$. A base at $x\in X$ is a set ${\cal B}_x\in P(X)$ is such that
\begin{equation}
\forall U,V \in {\cal B}_x \exists W \in {\cal B}_x :x\in W\subseteq U\cap V
\end{equation}
\end{definition}
\begin{definition}
For $x\in X$ a monad is a set 
\begin{equation}
\mbox{md}(x):=\cap_{U\in  {\cal B}_x} U^*
\end{equation}
We write $y\sim x$ if $y\in \mbox{md}(x)$.
\end{definition}
\begin{definition}
$O\in X$ is called open, if
\begin{equation}\label{eq1}
\forall x\in O \exists U\in  {\cal B}_x : U\subseteq O,
\end{equation}
or, equivalently, 
\begin{equation}\label{eq2}
\forall x\in O \mbox{ md(x)}\subseteq O^*.
\end{equation}
\end{definition}
To see the equivalence, first observe that assuming 
\eqref{eq1} 
for $x\in O$ there is $U\in  {\cal B}_x$ such that $U\subseteq O$. Hence $\mbox{md}(x)\subseteq U^*\subseteq O^*$.
On the other hand, assuming \eqref{eq2}, if $\mbox{md}(x)\subseteq O^*$ then $\exists W\in {\cal B}_x^*$ with
$W\subset \mbox{ md(x)}\subseteq O^*$ by the definition of monad above and the finite intersection property of ${\cal B}_x^*$. Hence the judgement
\begin{equation}
\forall x \in O^* \exists W\in {\cal B}_x^* : W \subseteq \mbox{ md(x)}~\&~ \mbox{ md(x)}\subseteq O^* 
\end{equation}
is true as is the logical consequence 
\begin{equation}
\forall x \in O^* \exists W\in {\cal B}_x^* : W \subseteq O^*. 
\end{equation}
Hence, 
\begin{equation}
\forall x \in O \exists W \in {\cal B}_x : W \subseteq O
\end{equation}
holds by transfer. The collection of all open sets ${\cal T}$ is called topology. It is an immediate consequence of \eqref{eq2} that the empty set $\oslash$ and the set $X$ are open sets Furthermore arbitrary unions of open sets are open. Next
\begin{definition}Given ${\cal B}_x$ for each $x\in X$ and $A\subseteq X$ we say that $x$ belongs to the closure of $A$ (in symbols $\overline{A}$) if 
\begin{equation}
\mbox{md}(x)\cap A^*\neq \oslash.
\end{equation} 
Hence, we define
\begin{equation}
\overline{A}:=\left\lbrace x\in X|\mbox{md}(x)\cap A^*\neq \oslash\right\rbrace 
\end{equation}
\end{definition}
It is easily verified that the arbitrary intersection of claosed sets is closed. Note that $\oslash$ and $X$ are also closed; sometimes they are called clopen (i.e. open and closed).
Now, we can define continuous maps for general topologies. 
\begin{definition} If $\left(X, {\cal T} \right),\left(Y, {\cal S} \right)$ 
are topological spaces, then we call
$f:{\cal T} \rightarrow  \left(Y, {\cal S}\right) $ continuous if
\begin{equation}
\forall V\in {\cal T} \left( f(x)\in V \rightarrow \exists U \in {\cal S}: x\in U \& f(U)\subseteq V \right) 
\end{equation} 
or equivalently and more succinct,
\begin{equation}
f\left(\mbox{md}_{\cal S}(x) \right)\subseteq  \mbox{md}_{\cal T}\left(f(x)\right). 
\end{equation}
\end{definition}
As an example of a topology which is not induced by a metric, consider the set
\begin{equation}
X=\left\lbrace  f:[0,1]\rightarrow [0,1]\right\rbrace 
\end{equation}
and for $f\in X$
\begin{equation}
B_f:=\left\lbrace U_{\epsilon , x_1,\cdots, x_n}| \epsilon >0, n\in {\mathbb N}, x_1,\cdots, x_n \in [0,1] \right\rbrace , 
\end{equation}
where
\begin{equation}
U_{\epsilon , x_1, \cdots, x_n}:=\left\lbrace g\in X | |g(x_i)-g(y_i)|<\epsilon , i\in {\mathbb N}_n\right\rbrace 
\end{equation} 
So far we have used nonstandard extensions $X^*$ of a topological spaces $X$ in order to define topologies on
$X$. The topology on $X^*$ which is used most in the literature is the so-called $S$-topology, which has the
base ${\cal B}=\left\lbrace U^*| U\in {\cal T}\right\rbrace$. Note that this topology is not Hausdorff since it cannot separate two different points of a monad. Next recall that a set $A\subseteq X$ is compact if every open cover of $A$ has a finite subcover. Robinson observed the following nonstandard criterion for compactness, the proof of which is left to the reader.
\begin{theorem}
A set $A\subseteq X$ is compact iff for each $y\in A^*$ there is an $x$ with $y\in \mbox{md}(x)$.
\end{theorem}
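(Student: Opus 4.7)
The plan is to prove both implications of Robinson's compactness criterion, using the monad-characterization of open sets \eqref{eq2} in one direction and the enlargement principle in the other.

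For the forward direction ($\Rightarrow$), I would argue by contraposition. Assume there exists $y\in A^*$ such that $y\notin\mbox{md}(x)$ for every $x\in A$. Unpacking the definition of monad, for each $x\in A$ I can choose $U_x\in{\cal B}_x$ with $y\notin U_x^*$. Each $U_x$ contains $x$, so the collection $\{U_x\}_{x\in A}$ (which lies in the standard topology, up to enlarging each $U_x$ to an open set via \eqref{eq1}) is an open cover of $A$. Compactness gives a finite subcover $U_{x_1},\dots,U_{x_n}$, so the sentence $\forall z\in A\,(z\in U_{x_1}\cup\cdots\cup U_{x_n})$ holds; transferring it yields $A^*\subseteq U_{x_1}^*\cup\cdots\cup U_{x_n}^*$, contradicting $y\notin U_{x_i}^*$ for each $i$.

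For the backward direction ($\Leftarrow$), suppose every $y\in A^*$ belongs to some monad $\mbox{md}(x)$ with $x\in A$, and let $\{V_\alpha\}_{\alpha\in I}$ be an open cover of $A$. The plan is to derive a contradiction from the assumption that no finite subcover exists. Under that assumption, the family
\begin{equation}
\{A\setminus V_\alpha\,:\,\alpha\in I\}
\end{equation}
has the finite intersection property inside the universe ${\mathbb U}$. By the enlargement property, there exists
\begin{equation}
y\in\bigcap_{\alpha\in I}(A\setminus V_\alpha)^*\subseteq A^*\setminus\bigcup_{\alpha\in I}V_\alpha^*.
\end{equation}
By hypothesis $y\in\mbox{md}(x)$ for some $x\in A$, and since the $V_\alpha$ cover $A$ there is an $\alpha_0$ with $x\in V_{\alpha_0}$. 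Because $V_{\alpha_0}$ is open, the equivalent characterization \eqref{eq2} yields $\mbox{md}(x)\subseteq V_{\alpha_0}^*$, hence $y\in V_{\alpha_0}^*$, contradicting $y\notin\bigcup_\alpha V_\alpha^*$.

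The main obstacle I expect is the backward direction, specifically the legitimate invocation of the enlargement principle: one must verify that $\{A\setminus V_\alpha\}$ really is a set in the universe ${\mathbb U}$ (not merely a collection indexed externally) so that its image under $*$ can be intersected. This is handled by replacing the index set $I$ by the internal family $\{B\subseteq A\,:\,B=A\setminus V\text{ for some open }V\}\in{\mathbb U}$, which has the finite intersection property by our no-finite-subcover assumption; then the enlargement axiom as stated earlier in the paper directly supplies the desired $y$. Everything else is formal manipulation of transfer and the monad definition.
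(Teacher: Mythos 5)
The paper explicitly leaves the proof of this theorem to the reader, so there is no in-paper argument to compare yours against; I can only judge it on its own merits. Its skeleton is the standard proof of Robinson's criterion and is sound: contraposition plus transfer of a finite subcover for the forward direction, and the enlargement (finite intersection) property combined with the monad characterization \eqref{eq2} of openness for the converse.

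Two details need repair. First, in the forward direction your parenthetical goes the wrong way: you cannot ``enlarge'' $U_x$ to an open set $O_x\supseteq U_x$, because then $O_x^*\supseteq U_x^*$ and the enlarged star may recapture $y$, destroying the contradiction. What you actually need is an open $O_x$ with $x\in O_x\subseteq U_x$, or simply the standing convention that each ${\cal B}_x$ consists of open sets (equivalently, that base elements are neighborhoods in the generated topology); under the paper's bare axioms for a base at $x$ this is not automatic and should be made an explicit hypothesis. Second, in the backward direction the ``fix'' you propose for the enlargement step is itself broken: the family $\left\lbrace A\setminus V \,:\, V \mbox{ open}\right\rbrace$ contains $A\setminus X=\oslash$ and therefore never has the finite intersection property. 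The repair is unnecessary anyway: $\left\lbrace A\setminus V_{\alpha} \,:\, \alpha\in I\right\rbrace$ is a subset of $P(A)\in{\mathbb U}$ and hence is itself an element of the universe ${\mathbb U}$ (it is a standard entity, not an internal one --- the enlargement axiom as stated in the paper applies to elements of ${\mathbb U}$), so the enlargement property may be invoked on it directly. With these two corrections the argument is complete.
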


\section{Introduction to measure theory}
Measures assign values to sets, but not to every set: this is the price of our generosity in RZFC with respect to sets. The collection of sets measures are defined on are usually $\sigma-$algebras. Nonstandard measures are defined on collections of internal sets. However, there is one problem: countable unions of internal sets are usually not internal sets. E.g. each singleton $\{n\}$, where $n\in {\mathbb N}$ is a is an internal set but
${\mathbb N}=\cup_{n\in {\mathbb N}}\{n\}$ is not an internal set, whenever ${\mathbb N}^*\setminus {\mathbb N}\neq \oslash$. For this reason, the countable union of members of an internal $\sigma$-algebra can only belong to the $\sigma$-algebra if it equals the union of finitely many of its terms. This was the essential observation of Loeb.   
We next review standard measure theory 

\section{Rings, Algebras, and Measures}

Let $S$ be a set. A ring of sets is a nonempty collection ${\cal A}$ of subsets of a set $S$ that is closed under
set differences and unions, i.e.
\begin{equation}
\mbox{ if $A,B\in {\cal A}$ then $A\setminus B, A\cup B\in {\cal A}$.}
\end{equation}
Note that this implies that $\oslash \in {\cal A}$ and that ${\cal A}$ is closed under symmetric differences and intersections. An algebra is a ring with $S\in {\cal A}$. A $\sigma$-ring is a ring with closure under countab;e union, i.e.
\begin{equation}
A_n \in {\cal A} \mbox{ for all $n$} \Rightarrow \bigcup_{n\in {\mathbb N}}A_n \in {\cal A}. 
\end{equation}  
 
\begin{example}Let $N\in {\mathbb N}^*\setminus {\mathbb N}\neq \oslash$, $S:=\{1,\cdots, N\}$. $P_i(S):=\{A\subseteq S|A \mbox{ is internal }\}$ is an algebra, but not a $\sigma$-algebra: we have $A_n:=\{1,\cdots, n\}\in P_i(S)$, but
$\cup_{n\in {\mathbb N}}A_n={\mathbb N}$ is external.
\end{example}

\begin{example}Recall that a countably saturated enlargement ${\mathbb U}\stackrel{*}{\rightarrow}{\mathbb U}'$ is an enlargement, where for each sequence $(A_n)_{n\in {\mathbb N}}$ of internal sets with $A_1\supseteq A_2\supseteq \cdots \supseteq A_i\supseteq A_{i+1}\supseteq \cdots $ we have $\bigcap_{i\in {\mathbb N}}A_i\neq \oslash$. If ${\cal A}\in {\mathbb U}$ is an algebra, then ${\cal A}^*\in {\mathbb U}'$ is an algebra by transfer, but not a $\sigma$-algebra.
\end{example}

Measures assign values in $\overline{{\mathbb R}}:=\{-\infty\}\cup {\mathbb R}\cup \{\infty\}$.
\section{The Loeb measure}

Let $\left(\Omega , {\cal A}, P \right)$ be an internal, finitely additive probability space, i.e.
\begin{itemize}

\item[i)] $\Omega $ internal 

\item[ii)] ${\cal A}$ is internal subalgebra of ${\cal P}(\Omega)$

\item[iii)] $P: {\cal A}\rightarrow {^*\mathbb R}$ is an internal function such that
\item[iv)] $P\left(\oslash \right)=0$, $P\left(\Omega \right)=1$, $\forall~A,B~:~P(A\cup B)=P(A)+P(B)-P(A\cap B)$.
\end{itemize}
The following theorem is the main theorem of non standard probability theory.
\begin{theorem} There is a standard ($\sigma$-additive) probability space $\left(\Omega , {\cal A}_L, P_L \right)$
such that
\begin{itemize}

\item[i)] ${\cal A}_L$ is a $\sigma$-algebra with ${\cal A}\subseteq {\cal A}_L\subseteq {\cal P}(\Omega)$

\item[ii)] $P_L=P^{\circ}$ on {\cal A}

\item[iii)] For every $A\in {\cal A}_L$ and standard $\epsilon >0$ there are 
${\cal A}_i$ and ${\cal A}_o$ in ${\cal A}$ such that $A_i\subseteq A\subseteq  A_o$ and
$P\left(A_o\setminus A_i \right)<\epsilon$

\item[iv)] For every $A\in  {\cal A}_L$ there is $B\in {\cal A}$ such that $P_L\left(A\Delta B\right)=0$
\end{itemize}
The space $\left(\Omega , {\cal A}_L, P_L \right)$ is called a Loeb probability space
\end{theorem}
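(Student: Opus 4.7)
The plan is to construct $P_L$ in three stages: first define a candidate $P_0$ on $\mathcal{A}$ by taking standard parts, then upgrade finite additivity of $P_0$ to countable additivity using the internal/external dichotomy, and finally invoke Carath\'eodory's extension theorem to obtain $\mathcal{A}_L$ and $P_L$. Properties (i)--(iv) would then be read off from the Carath\'eodory machinery, with some extra work using saturation for (iii) and (iv).

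Concretely, first I would set $P_0(A) := {}^{\circ}P(A)$ for $A \in \mathcal{A}$; this is well defined since $P(A)\in{^{*}[0,1]}$ is finite, and finite additivity of $P_0$ follows from finite additivity of $P$ together with continuity of the standard-part map on the finite hyperreals. The crucial step is to show that $P_0$ is in fact countably additive on $\mathcal{A}$ in the sense needed by Carath\'eodory: suppose $A = \bigsqcup_{n\in\mathbb{N}} A_n$ is a countable disjoint union with each $A_n \in \mathcal{A}$ and $A\in\mathcal{A}$. Consider the decreasing sequence of internal sets $B_k := A \setminus \bigsqcup_{n=1}^{k} A_n$. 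We have $\bigcap_{k\in\mathbb{N}} B_k = \emptyset$, so if every $B_k$ were nonempty we would contradict countable saturation of the enlargement (equivalently, the observation used already in the paper that a countable union of internal sets which happens to be internal is in fact a finite union, else it would be external). Hence some $B_{k_0} = \emptyset$, so only finitely many $A_n$ are nonempty, and
\[
P_0(A) = {}^{\circ}\!\Bigl(\sum_{n=1}^{k_0} P(A_n)\Bigr) = \sum_{n=1}^{k_0} P_0(A_n) = \sum_{n\in\mathbb{N}} P_0(A_n).
\]

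With countable additivity in hand on the algebra $\mathcal{A}$, Carath\'eodory's extension theorem gives a unique $\sigma$-additive probability on $\sigma(\mathcal{A})$, and we let $\mathcal{A}_L$ be its completion and $P_L$ the completed measure; (i) and (ii) are then immediate. For (iii), given $A \in \mathcal{A}_L$ and standard $\epsilon>0$, the outer-measure construction yields a countable cover $(C_n)\subseteq\mathcal{A}$ with $\sum_n P_0(C_n) < P_L(A)+\epsilon/2$; applying the finiteness argument above to $\bigcup_n C_n$ produces $A_o\in\mathcal{A}$ with $A\subseteq A_o$ and $P_0(A_o\setminus A) < \epsilon/2$, and the same construction applied to $\Omega\setminus A$ yields $A_i\in\mathcal{A}$ with $A_i\subseteq A$ and $P_0(A\setminus A_i)<\epsilon/2$, so that $P(A_o\setminus A_i)<\epsilon$. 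Property (iv) follows by applying (iii) with $\epsilon = 1/n$ to get a sequence $B_n\in\mathcal{A}$ with $P_L(A\triangle B_n)<1/n$ and then using countable saturation to find a single $B\in\mathcal{A}$ whose symmetric difference with $A$ has Loeb measure zero.

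The main obstacle is the countable-additivity step: without it, nothing works, and it is precisely here that the peculiar interplay between internal and external sets does the decisive work. Everything downstream is routine measure-theoretic bookkeeping, but the genius of Loeb's observation is that the internal $\sigma$-failure of $\mathcal{A}$ is not a bug but a feature that automatically promotes finite additivity to countable additivity.
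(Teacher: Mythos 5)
Your overall route is the same as the paper's: define the premeasure as the standard part of $P$, run the Carath\'eodory outer-measure construction, and obtain the approximation clauses (iii) and (iv) from saturation. Your explicit isolation of the vacuous countable additivity of ${}^{\circ}P$ on $\mathcal{A}$ (via the decreasing internal sets $B_k$ and countable saturation) is correct and is actually stated more carefully than in the paper, which only gestures at this point before invoking the outer-measure machinery.

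There is, however, a genuine gap in your argument for (iii). You take a countable cover $(C_n)\subseteq\mathcal{A}$ of $A$ with $\sum_n P_0(C_n)<P_L(A)+\epsilon/2$ and claim that ``applying the finiteness argument to $\bigcup_n C_n$'' produces a single $A_o\in\mathcal{A}$ containing $A$. The finiteness argument only applies when the countable union of internal sets is itself internal; here $\bigcup_n C_n$ is in general external (compare the paper's own example $\bigcup_n\{1,\dots,n\}=\mathbb{N}$ inside $S=\{1,\dots,N\}$ for infinite $N$), and nothing forces all but finitely many $C_n$ to be redundant --- genuinely infinite covers are the whole point of the outer measure. The correct mechanism, and the one the paper uses, is sequential comprehensiveness plus overflow: extend the internal increasing sequence $D_k=\bigcup_{n\le k}C_n$ to an internal sequence indexed by ${}^{*}\mathbb{N}$, observe that the internal set of indices $k$ satisfying $D_n\subseteq D_k$ for all $n\le k$ and $P(D_k)<P_L(A)+\epsilon/2$ contains every standard $k$, and overflow to an infinite index $K$; then $A_o:=D_K\in\mathcal{A}$ contains $\bigcup_n D_n\supseteq A$ and has the required measure. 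The same remark applies to your sketch of (iv): countable saturation applied naively to the sequence $(B_n)$ does not directly produce a single $B$ with $P_L(A\Delta B)=0$; one needs the monotone squeeze $C_n\subseteq B\subseteq D_n$ with $P(D_n\setminus C_n)<1/n$ and an overflow argument to land an internal $D_K$ between the two chains, exactly as in the paper's Lemma on $P$-approximable sets. With those two repairs your proof coincides with the paper's.
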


Proof. We start with the finitely additive function $P:{\cal A}\rightarrow [0,\infty]^*$, where ${\cal A}$ is an internal ring of subsets of the internal set $S$. Then we set for all $A\in {\cal A}$
\begin{equation}
P_L(A)=\left\{ \begin{array}{ll}
\mbox{sh}(P(A)), &\mbox{ if $P_L(A)$ is limited}\\
\\
\infty &\mbox{ else.}
\end{array} \right.
\end{equation}  
We extend ${\cal A}$ to a $\sigma$-algebra by the standard outer measure construction. If $B\subset S$ is an arbitrary
subset of $S$, then define
\begin{equation}
P_L(B)=\inf \left\lbrace  \sum_{n\in {\mathbb N}}P(A_n)|
 A_n \in {\cal A} \mbox{ and } B\subseteq \cup_{n\in {\mathbb N}} A_n\right\rbrace 
\end{equation}
A set $B\subset S$ is called Loeb-measurable ($P_L$-measurable if it splits every set $E\subset S$ $P_L$-additively
in the sense that 
\begin{equation}
P_L(E)=P_L(E\cap B) +P_L(E\setminus B).
\end{equation}
We define
\begin{equation}
{\cal A}_L:=\left\lbrace A\subseteq S| A \mbox{ is } P_L-\mbox{measurable}\right\rbrace 
\end{equation}
From the standard outer measure construction we know that ${\cal A}_L$ is a $\sigma$-algebra and $P_L$ is a complete measure. We say that an arbitrary set $B\subseteq S$ is called $P$-approximable if for every $\epsilon >0$ there exist approximating sets $C_{\epsilon},D_{\epsilon}\in {\cal A}$, such that 
\begin{equation}
C_{\epsilon}\subseteq B\subseteq D_{\epsilon} \mbox{ and } P\left( D_{\epsilon}\setminus C_{\epsilon} \right)<\epsilon 
\end{equation}
First we show
\begin{lemma}\label{delta}
If $B$ is $P$-approximable then there exists a set $A\in {\cal A}$ such that
\begin{equation}
P_L\left(A\Delta B \right)=0. 
\end{equation}
\end{lemma}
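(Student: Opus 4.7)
The plan is to exploit the ring structure of ${\cal A}$ to replace the approximating family by a monotone one, and then invoke countable saturation to pull the resulting chain of approximants down to a single internal set.

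First I would use $P$-approximability with $\epsilon = 1/n$ for each $n \in {\mathbb N}$ to pick $C_n, D_n \in {\cal A}$ with $C_n \subseteq B \subseteq D_n$ and $P(D_n \setminus C_n) < 1/n$. Since ${\cal A}$ is a ring, set $\tilde{C}_n := \bigcup_{k \le n} C_k$ and $\tilde{D}_n := \bigcap_{k \le n} D_k$. These lie in ${\cal A}$, still sandwich $B$, still satisfy $P(\tilde{D}_n \setminus \tilde{C}_n) < 1/n$ (because $\tilde{D}_n \setminus \tilde{C}_n \subseteq D_n \setminus C_n$), and now $\tilde{C}_n$ is increasing and $\tilde{D}_n$ is decreasing in $n$.

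The heart of the argument is the saturation step. For each $n$ the collection ${\cal S}_n := \{A \in {\cal A} : \tilde{C}_n \subseteq A \subseteq \tilde{D}_n\}$ is internal (it is defined by the internal formula ``$A \in {\cal A} \wedge \tilde{C}_n \subseteq A \wedge A \subseteq \tilde{D}_n$'' with internal parameters) and nonempty, since $\tilde{C}_n \in {\cal S}_n$. By monotonicity of the two sequences the family $({\cal S}_n)_{n\in {\mathbb N}}$ is decreasing, and so has the finite intersection property as a countable family of internal sets. Countable saturation of the enlargement, as recalled earlier in the excerpt, then yields some $A \in \bigcap_{n \in {\mathbb N}} {\cal S}_n$, that is, a single internal $A \in {\cal A}$ with $\tilde{C}_n \subseteq A \subseteq \tilde{D}_n$ for every standard $n$.

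Finally, since $A \Delta B \subseteq \tilde{D}_n \setminus \tilde{C}_n$ for every $n$, monotonicity of the outer measure $P_L$ gives $P_L(A \Delta B) \le {^{\circ}P}(\tilde{D}_n \setminus \tilde{C}_n) \le 1/n$ for all $n \in {\mathbb N}$, whence $P_L(A \Delta B) = 0$. The main obstacle, as I see it, is the saturation step: one must be sure that the ambient enlargement is $\aleph_1$-saturated (the excerpt only mentions countable saturation in a recalled example) and that ${\cal S}_n$ is genuinely internal with parameters inside the relevant superstructure. Once those points are checked, the rest reduces to the standard outer-measure estimate above.
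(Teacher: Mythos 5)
Your proof is correct, and it reaches the conclusion by a slightly different mechanism than the paper. Both arguments begin the same way: monotone approximating sequences $C_1\subseteq C_2\subseteq\cdots\subseteq B\subseteq\cdots\subseteq D_2\subseteq D_1$ in ${\cal A}$ with $P(D_n\setminus C_n)<1/n$, followed by the observation that any internal $A\in{\cal A}$ squeezed between all the $C_n$ and $D_n$ satisfies $A\Delta B\subseteq D_n\setminus C_n$ and hence $P_L(A\Delta B)=0$. (You are more careful than the paper here: the paper simply asserts that the approximating sequences can be taken monotone, whereas you derive this from the ring structure of ${\cal A}$ by passing to $\tilde C_n=\bigcup_{k\le n}C_k$ and $\tilde D_n=\bigcap_{k\le n}D_k$.) The difference lies in how the single internal set is produced. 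The paper invokes sequential comprehensiveness to extend the sequences $(C_n)$, $(D_n)$ to internal hypersequences indexed by ${\mathbb N}^*$ and then applies overspill to the internal statement $\forall n\le k\,(C_n\subseteq D_k\subseteq D_n)$, obtaining an unlimited index $K$ with $C_n\subseteq D_K\subseteq D_n$ for all standard $n$; the desired set is then $D_K$ itself. You instead form the decreasing chain of nonempty internal sets ${\cal S}_n=\{A\in{\cal A}:\tilde C_n\subseteq A\subseteq\tilde D_n\}$ and apply countable saturation directly, which is exactly the property the paper records in its example of a countably saturated enlargement. The two devices are interchangeable here (indeed $\aleph_1$-saturation yields sequential comprehensiveness, and the ultrapower over ${\mathbb N}$ by a nonprincipal ultrafilter constructed earlier in the paper is countably saturated, so the hypothesis you flag as the main obstacle is satisfied); your route avoids introducing the extended hypersequence and the overspill step, at the cost of having to check internality of ${\cal S}_n$ via the internal definition principle, which you do. The concern you raise about saturation applies equally to the paper's own proof, which assumes sequential comprehensiveness without verifying it for the constructed enlargement.
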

We apply $P$-approximability and find two sequences of elements of ${\cal A}$ $\left( C_i\right)_{i\in {\mathbb N}} $
and $\left( D_i\right)_{i\in {\mathbb N}}$ with
\begin{equation}
\cdots \subseteq C_n\subseteq C_{n+1}\subseteq \cdots \subseteq B \subseteq \cdots \subseteq D_{n} \subseteq D_{n-1}
\end{equation}
and
\begin{equation}
P_L(D_n\setminus A_n)<\frac{1}{n}
\end{equation}
Recall that the embedding $U\stackrel{*}{\rightarrow } U' $ is sequentially comprehensive which means that
any function $f:{\mathbb N}\rightarrow B\in U^*$ extends to an internal function $f:{\mathbb N}^*\rightarrow B\in U^*$. For each $k\in {\mathbb N}$ we have
\begin{equation}
\forall n\in {\mathbb N}^* \left(n\leq k \rightarrow C_n \subseteq D_k \subseteq D_n  \right). 
\end{equation}
Since all constants are internal the latter statement is internal. Hence there exists $K\in {\mathbb N}^*\setminus {\mathbb N}$ such that
\begin{equation}
\forall n\in {\cal N}: C_n \subseteq C_K \subseteq D_n
\end{equation}
holds. But then for all $n\in {\cal N}$ we have
\begin{equation}
D_K\Delta B = \left(D_K\setminus B  \right) \cup \left( B\setminus D_K\right)\subseteq D_n\setminus C_n  
\end{equation}
and
\begin{equation}
P_L\left(D_K\Delta B \right)<\frac{1}{n}. 
\end{equation}
Hence $P_L\left(D_K\Delta B \right)=0$ while $D_K\in {\cal A}$.
Next we show: if $B$ is Loeb measurable, then $B$ is $P$-approximable. This follows from observation that for each Loeb-measurable set $B$ we have
\begin{equation}\label{is}
\begin{array}{ll}
P_L(B)&=\inf \{P_L(A)| B\subseteq A \in {\cal A}\}\\
\\
&=\sup\{ P_L(A)| A\subseteq B \& A\in {\cal A}\}.
\end{array}
\end{equation}
If \eqref{is} holds, then given $\epsilon \in {\mathbb R}_+$ there are $C_{\epsilon}, D_{\epsilon}\in {\cal A}$ s. th.
\begin{equation}
C_{\epsilon}\subseteq B \subseteq D_{\epsilon}~, P_L(D_{\epsilon}< P_L(B)+\frac{\epsilon }{2},\mbox{ and } P_L (C_{\epsilon})<P_L(B)+\frac{\epsilon }{2}. 
\end{equation}
Then we get 
\begin{equation}
\begin{array}{ll}
P_L\left( D_{\epsilon}\setminus C_{\epsilon}\right)&= P_L\left( D_{\epsilon}\setminus B \cup B\setminus  C_{\epsilon}\right)\\
\\
&=P_L\left( D_{\epsilon}\setminus B\right) +P_L\left(  B\setminus  C_{\epsilon}\right)\\
\\
&=P_L\left( D_{\epsilon}\right)-P_L\left( B\right) +P_L\left(  B\right) -P_L\left(  C_{\epsilon}\right)\\
\\
&<\frac{\epsilon}{2}+\frac{\epsilon}{2}=\epsilon ,
\end{array}
\end{equation}
as desired. In order to prove \eqref{is} we first show that
\begin{equation}
\forall \epsilon \in {\mathbb R}_+ \exists A_{\epsilon} \in {\cal A}: B\subseteq A_{\epsilon}~\&~ P_L(A_{\epsilon})\leq P_L(B)+\epsilon .
\end{equation}
Now, by the properties of outer measure for $\epsilon >0$ there exist a nondecreasing sequence $(A_i)_{i\in {\mathbb N}}$ of elements of {\cal A} with
\begin{equation}
\cup_{i\in {\mathbb N}} A_i \supseteq B \mbox{ and } P_L\left(\cup_{i\in {\mathbb N}} A_i\right)< P_L(B)+\epsilon .   
\end{equation}
By sequential comprehensiveness, the sequence $(A_i)_{i\in {\mathbb N}}$ extends to a sequence  $(A_i)_{i\in {\mathbb N}^*}$ of elements in ${\cal A}$. For each $k\in {\mathbb N}$ we have
\begin{equation}{kk}
\forall n\in {\cal N}^*\left(n\leq k \rightarrow A_n \subseteq A_k~\&~ P(A_n)<P_L(B)+\epsilon \right), 
\end{equation}
where we observed that $P(A_n)\sim P_L(A_n)\leq P_L(\cup_{i\in {\mathbb N}})$. The statement \eqref{kk} is internal, so by overflow it must be true for some $K\in {\mathbb N}^*\setminus {\mathbb N}$ in place of $k\in {\mathbb N}$. Hence, $A_n\subseteq A_K$ for all $n\in {\cal N}$ and $B\subseteq \cup_{n\in {\mathbb N}} A_n \subseteq A_K$, while
$P_L(A_K)\sim P(A_K)<P_L(B)+\epsilon$, so $A_K$ is the set $A_{\epsilon}$ we are looking for.
Next, to show that
\begin{equation}
P_L(B)=\sup\{ P_L(A)| A\subseteq B \& A\in {\cal A}\}
\end{equation}  
means to show that given $\epsilon \in {\mathbb R}_+$ there is $A_{\epsilon}\subseteq B$ with $P_L(B)-\epsilon < P_L(A_{\epsilon})$. By the argument above there is $D\in {\cal A}$ with $B\subseteq D$. Since $D,B$ are Loeb-measurable $D\setminus B$ is Loeb-measurable, so by the argument above there is $C\in {\cal A}$ with $C\supseteq D\setminus B$ s.th.
\begin{equation}
P_L(C)<P_L(D\setminus B)+\epsilon .
\end{equation}
W.l.o.g. $C\subseteq D$ (otherwise replace $C$ by $C\cap D$). Let $A_{\epsilon}=D\setminus C \in {\cal A}$. Then $A_{\epsilon}\subseteq B$ and $C$ is the disjoint union of $D\setminus B$ and $B\setminus A_{\epsilon}$. Hence,
\begin{equation}
P_L(D\setminus B)+P_L(B\setminus A_{\epsilon})=P_L(C)<P_L(B\setminus A_{\epsilon})<\epsilon .
\end{equation}
Hence,
\begin{equation}
P_L(B)=P_L(A_{\epsilon})+P_L(B\setminus A_{\epsilon})<P_L(A_{\epsilon})+\epsilon,
\end{equation}
so $P_L(B)-\epsilon < P_L(A_{\epsilon})$, as desired.
Next we prove: if $B$ is P-approximable, then $B$ is Loeb-measurable. We have to show that for $P$-approximable $B$ and for all $A\subseteq S$
\begin{equation}
P_L(E)\geq P_L(E\cap B)+P_L(E\setminus B).
\end{equation}
By Lemma \eqref{delta} we have
\begin{equation}
\exists A\in {\cal A}~:~P_L(A\Delta B)=0. 
\end{equation}
So 
\begin{equation}
P_L(E)\geq P_L(E\cap A)+P_L(E\setminus A).
\end{equation}
Let $C:=(E\cap B)\setminus A$, $D=(E\cap A)\setminus B$, $G=E\setminus (A\cup B)$, and $H=E\cap A \cap B$.
Then $C\subseteq B\setminus A$ and $D\subseteq A\setminus B$. Hence, $P_L(C)=P_L(D)=0$. Furthermore, $C\cup H = E\cap B$, while $C\cap H=\oslash$. Hence,
\begin{equation}
P_L(E\cap B)=P_L(C\cup H)\leq P_L(C)+P_L(H)=P_L(H),
\end{equation} 
and
\begin{equation}
P_L(E\setminus B)=P_L(D\cup G)\leq P_L(D)+P_L(G)=P_L(G).
\end{equation}
Hence,
 \begin{equation}
\begin{array}{ll}
P_L(E\setminus B)+P_L(E\setminus B)&\leq P_L(H)+P_L(G)\\
\\
&\leq P_L(E\cap A)+P_L(E\setminus A)\\
\\
&=P_L(E).~\Box
\end{array}
\end{equation}
\section{Stochastic Integrals, It\^{o} formulas}

Definition of stochastic integrals and proofs of It\^{o} formulas are particularly simple in the nonstandard framework. Recall that an internal process is just an internal map
\begin{equation}
X:\Omega\times T\rightarrow ^*{\mathbb R}^n,
\end{equation}
where $\left(\Omega, {\cal F}, P \right)$ is an internal probability space.
We may introduce stochastic integrals as follows:
\begin{definition}
Let $X:\Omega\times T\rightarrow ^*{\mathbb R}$ and $Y:\Omega\times T\rightarrow ^*{\mathbb R}$ be two internal processes. Then the internal process
\begin{equation}
\int YdX:=    \sum_{s< t}X(s,.)\Delta Y(s,.)
\end{equation}
is called a stochastic integral. Here, $\Delta Y(t,.)=Y(t,.)-Y(t-\Delta t,.)$.
\end{definition}
Note that in nonstandard analysis an alternative way of defining martingales $M=(M_t)_{t\geq 0}$ is to require
\begin{equation}
\int_{[\omega]_t}\Delta M_t(\omega)dP(\omega)=0
\end{equation}
for all $t\geq 0$ and all $\omega$, and where $M$ is an internal process. Recall that a martingale $M$ is called a $\lambda^2$-martingale if $E\left[M^2_t\right]$ is finite for all $t\in T$. Furthermore recall that an internal process $Y$ is called $S$-continuous, if the internal map $t\rightarrow X_{t}(\omega)$ is $S$-continuous for almost all $\omega$. Here an internal map $f:T\rightarrow *^{\mathbb R}$ is $S$-continuous if for all $s,t$ with $s\approx t$ (i.e., $s-t$ infinitesimal) we have
\begin{equation}
-\infty<^{\circ}f(s)=^{\circ}f(t)<\infty.
\end{equation}
Note that in non-standard analysis the quadratic variation of $M$ can be defined by
\begin{equation}
 [M]_t(\Omega)=\sum_{s<t}\Delta M_s(\omega)^2.
\end{equation}
 
\begin{theorem}
(It\^{o}'s formula) Let $M$ be an internal process, which  is a  $S$-continuous $\lambda^ 2$-martingal, and assume that $f:{\mathbb R}\rightarrow {\mathbb R}$ is twice continuously differentable. Then
\begin{equation}
\begin{array}{ll}
^*f(M_t)\approx ^*f(M_0)+\int_0^t\hspace{0.01cm} ^*f(M_s)dM_s+\frac{1}{2}\int_0^t  
\hspace{0.01cm} ^*\mbox{$f''$}(M_s)d[M]_s
\end{array}
\end{equation}
 
\end{theorem}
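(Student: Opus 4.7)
The plan is to proceed in the classical way: telescope the difference $^*f(M_t)-^*f(M_0)$ along the hyperfinite timeline $T$, apply a second-order Taylor expansion increment by increment, and then show that the dominant terms assemble exactly into the stochastic integral and the quadratic-variation integral on the right-hand side, while the error terms are $P_L$-almost surely infinitesimal, which is precisely what the symbol $\approx$ in the conclusion requires.

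Concretely, I would first write
\begin{equation}
^*f(M_t)-{^*f(M_0)}=\sum_{s<t}\bigl({^*f(M_{s+\Delta t})}-{^*f(M_s)}\bigr),
\end{equation}
and, since $f\in C^2$, apply the Lagrange form of Taylor's theorem on each increment:
\begin{equation}
^*f(M_{s+\Delta t})-{^*f(M_s)}={^*f'(M_s)}\,\Delta M_s+\tfrac{1}{2}\,{^*f''(\xi_s)}\,(\Delta M_s)^2,
\end{equation}
with $\xi_s$ between $M_s$ and $M_{s+\Delta t}$. The first summand, summed over $s<t$, is by the definition of the nonstandard stochastic integral given earlier in the text exactly $\int_0^t {^*f'(M_s)}\,dM_s$. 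Splitting ${^*f''(\xi_s)}={^*f''(M_s)}+\delta_s$, the main second-order contribution $\frac{1}{2}\sum_{s<t}{^*f''(M_s)}(\Delta M_s)^2$ coincides, by the definition $[M]_t(\omega)=\sum_{s<t}\Delta M_s(\omega)^2$ recalled in the excerpt, with $\frac{1}{2}\int_0^t{^*f''(M_s)}\,d[M]_s$.

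The main obstacle will be controlling the error $\frac{1}{2}\sum_{s<t}\delta_s (\Delta M_s)^2$ and showing it is infinitesimal almost surely. Here the hypotheses do the work: $S$-continuity of $M$ ensures that for $P_L$-almost every $\omega$ the trajectory $s\mapsto M_s(\omega)$ has limited range on $[0,t]$ and that $\max_{s<t}|\Delta M_s(\omega)|$ is infinitesimal, so $|\xi_s-M_s|\leq|\Delta M_s|$ is also infinitesimal; by uniform continuity of $^*f''$ on the (a.s. bounded) range, $\max_{s<t}|\delta_s|\approx 0$. The error is therefore bounded by $(\max_s|\delta_s|)\cdot [M]_t(\omega)$, and since $M$ is a $\lambda^2$-martingale the standard identity $E[[M]_t]=E[M_t^2]-E[M_0^2]$ is finite, so $[M]_t$ is limited $P_L$-almost surely, making the product infinitesimal.

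The last point to address is that the stochastic-integral term is well defined as an element of the Loeb space, for which one invokes $S$-continuity of $M$ together with the $\lambda^2$ bound to promote the hyperfinite sum to a standard object; once this is in place, taking standard parts on both sides and combining the three estimates above yields $^*f(M_t)\approx{^*f(M_0)}+\int_0^t{^*f'(M_s)}\,dM_s+\tfrac{1}{2}\int_0^t{^*f''(M_s)}\,d[M]_s$, as claimed. I expect the Taylor remainder estimate to be the only delicate step; everything else is bookkeeping using the definitions already introduced in the excerpt.
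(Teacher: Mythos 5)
Your proposal follows essentially the same route as the paper: telescoping $^*f(M_t)-{}^*f(M_0)$ over the hyperfinite timeline, applying the second-order Taylor formula with Lagrange remainder on each increment, identifying the first- and second-order sums with the nonstandard stochastic integral and the quadratic-variation integral, and showing the remainder sum is infinitesimal. In fact your argument is more complete than the paper's, which merely asserts that the error sum $\sum_{s<t}\tfrac{1}{2}\left({}^*f''(\Xi_s)(\Delta M_s)^2-{}^*f''(M_s)(\Delta M_s)^2\right)$ is infinitesimal, whereas you supply the justification via $S$-continuity (to make $\max_s|\Xi_s-M_s|$ infinitesimal and exploit uniform continuity of $f''$ on the limited range) and the $\lambda^2$-bound (to make $[M]_t$ limited almost surely).
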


\begin{proof}
From the nonstandard Taylor formula we compute the increment
\begin{equation}
\begin{array}{ll}
^*f(M_s+\Delta M_s)-^*f(M_s)=^*f'(M_s)\Delta M_s+\frac{1}{2}^*\mbox{$f''$}\Delta M_s^2\\
\\
+\frac{1}{2}\left(\mbox{$f''$}\Delta \Xi_s^2-\mbox{$f''$}\Delta M_s^2 \right)
\end{array}
\end{equation}
for some $\Xi_s$ between $M_s$ and $M_s+\Delta M_s$.
Summing up a telescopic sum leads to the result because the sum
\begin{equation}
\sum_{s< t} \frac{1}{2}\left(\mbox{$f''$}\Delta \Xi_s^2-\mbox{$f''$}\Delta M_s^2 \right) 
\end{equation}
is infinitesimal.
\end{proof}

\section{Proof of generalized Feyman-Kac formulas and generalized stochastic processes}
In this section we shall prove Feynman-Kac formulas for generalized Brownian motion. We also introduce generalized processes related to parabolic systems with variable coefficients. This is a first step in the direction of a stochastic field theory which may be worked out in the future.
We have already seen that generalized Brownian motions can be defined on the same path space $\Omega_n$ as the vectoriell Brownian motion, i.e., on the path space
\begin{equation}
\Omega_n=\left\lbrace  \omega:T\rightarrow \left\lbrace -1,1\right\rbrace^n|\omega \mbox{ internal } \right\rbrace. 
\end{equation}
Recall that we defined an equivalence relation on $\Omega_n$ by 
\begin{equation}
\omega\sim_n \tilde{\omega}~\mbox{iff}~\forall s<t : \omega(s)=\tilde{\omega}(s),
\end{equation}
and denoted the corresponding equivalence classes by $[\omega]^n_t$. Furthermore, on
 \begin{equation}\label{omegatm}
\Omega^n_M=\left\lbrace [\omega]^n_{t_M}|\omega \in \Omega_n \right\rbrace, 
\end{equation}
we defined an internal probability measure
\begin{equation}
\begin{array}{ll}
P^n_M:\Omega^n_M\rightarrow [0,1]\\
\\
P^n_M([\omega]^n_{t_M})=\frac{1}{2^{nM}} \mbox{ for all } [\omega]^n_{t_M}\in \Omega^n_M.
\end{array}
\end{equation}
Let us consider the scalar case first, and how Feynman-Kac formulas can be proved in the framework of nonstandard analysis.
For each time $t_M$ let us define the random variable 
\begin{equation}
\begin{array}{ll}
B^x_{t_M}:\Omega^n_{t_M}\rightarrow ^*{\mathbb R}^n\\
\\
B^x_{t_M}([\omega]^n_{t_M}):=x+\sum_{i=1}^n\left( \sum_{s<t}\omega_i(s)\sqrt{\Delta t}\right) \mathbf{e}_i.
\end{array}
\end{equation}
Next recall that $^*{\mathbb R}_{\Delta x}$ denotes a hyperfinite discretization of $^*{\mathbb R}$ with respect to the hyperreal number $\Delta x$. In case of dimension $n$ we define
\begin{equation}
^*{\mathbb R}^n_{\Delta x}:=\left\lbrace \Delta x k|k\in ^*{\mathbb Z}^n\right\rbrace,
\end{equation}
where $^*{\mathbb Z}$ denotes the set of hyperintegers and $\Delta x k=(\Delta x k_1,\cdots ,\Delta x k_n)$. Note that this discretization uses the same discretization size $\Delta x$ in all directions. This is sufficient since we are interested in projections to a classical space, and all classical objects we are interested in can be recovered by this form of discretization.  
Note that for a considerable class of data $f:^*{\mathbb R}^n\rightarrow ^*{\mathbb R}$ (see below) the standard part of the function
\begin{equation}
u(t_M,x)=E\left(f(B^x_{t_M})\right)=\sum_{[\omega]^n_{t_M}\in \Omega^n_M}f\left(B^x_{t_M}([\omega]^n_{t_M}) \right)P^n_M([\omega]^n_{t_M})
\end{equation}
satisfies the $n$ dimension heat equation Cauchy problem with initial data $^{\circ} f$. We prove this using a nonstandard transition density (we could prove this directly, but the following considerations concerning the transition density are useful for our argument later on). 
Similar as in the case $n=1$ in general dimension $n\geq 1$ we introduce the density function $p$ defined on $T\times ^*{\mathbb R}^n_{\Delta x}\times ^*{\mathbb R}^n_{\Delta x}$ by
\begin{equation}
p(t_M,x,y):=\sum_{[\omega]^n_{t_M}\in \Omega^n_M}\delta^n_y\left(B^x_{t_M}([\omega]^n_{t_M}) \right)P^n_M([\omega]^n_{t_M}),
\end{equation}
where for each $y\in ^*{\mathbb R}^n_{\Delta x}$ 
\begin{equation}
\delta^n_y(z):=\left\lbrace \begin{array}{ll}
1 \mbox{ iff } z=y\\
\\
0 \mbox{ iff } z\neq y
\end{array}\right.
\end{equation}
denotes the hyperfinite $n$-dimensional Kronecker delta translated by $y$. 
The standard part of the function $p$ can be computed as a limit similar as in the nonstandard proof of the central limit theorem above. Furthermore, the internal function $u$ has a representation
\begin{equation}
u(t_M,x)=\sum_{y\in ^*{\mathbb R}^n}\sum_{[\omega]^n_{t_M}\in \Omega^n_M}f(y)\delta^n_y\left(B^x_{t_M}([\omega]^n_{t_M}) \right)P^n_M([\omega]^n_{t_M}),
\end{equation}
and this representation may be used to get another proof of the Feynman-Kac formula for $L^1$ data in standard space. Let $\Pi_t$ denote the projection
Consider the 
\begin{equation}
\begin{array}{ll}
p(t_M+\Delta t,x,y):=\sum_{[\omega]^n_{t_{M+1}}\in \Omega^n_{M+1}}\delta^n_y\left(B^x_{t_{M+1}}([\omega]^n_{t_{M+1}}) \right)P^n_{M+1}([\omega]^n_{t_{M+1}})\\
\\
=x+\sum_{\omega(t) \in \left\lbrace -1,1 \right\rbrace^n }
\sum_{j=1}^n\left( \omega_j(t)\sqrt{\Delta t}\right)\mathbf{e}_jP^n_1(\omega(t))\\
\\
+\sum_{\omega(t) \in \left\lbrace -1,1 \right\rbrace^n }\sum_{[\omega]^n_{t_M}\in \Omega^n_M}\delta^n_y\left(B^{x+\omega(t)\sqrt{\Delta t}}_{t_M}([\omega]^n_{t_M}) \right)P^n_M([\omega]^n_{t_M})\\
\\
=p(t_M,x,y)+\frac{1}{2}\sum_{i=1}^n\frac{\partial ^2}{\partial x_i^2}p(t_M,x,y)\Delta t +O\left( \Delta t^{3/2}\right),
\end{array}
\end{equation}
and this leads to the conclusion that the standard part of the function $p$ satisfies the heat equation with respect to the variable $x$ (heat equation with $\frac{1}{2}$ times the Laplacian, to be precise). Similarly, the standard part of $p$ satisfies the adjoint heat equation with respect to $y$ (similar proof). Hence the standard part of $p$ is indeed the transition density.  

Next let us consider a parabolic system with constant coefficients $a^{ij}_{kl}$, $b^i_{jk}$, and $c^i_j$ on the $n$\textit{} torus ${\mathbb T}^n=\left({\mathbb R}^n/ {\mathbb Z}\right) ^n$ of the form
\begin{equation}\label{parasyst2}
\left\lbrace \begin{array}{ll}
\frac{\partial u_i}{\partial t}=\sum_{j,k,l=1}^n a^{ij}_{kl} \frac{\partial^2 u_j}{\partial x_k \partial x_l }
+\sum_{j,k=1}^n b^i_{jk}\frac{\partial u_j}{\partial x_k}+\sum_{j=1}^n c^i_ju_j,\\
\\
u_i(0,x)=f_i\left( 0,x+\mathbf{e}_j\right)    
\end{array}\right.
\end{equation}
for all $\mathbf{e}_j$ and some data $\mathbf{f}=\left( f_1,\cdots,f_n\right)^T$. For simplicity of notation we consider the case where the constants of the lower order terms are zero, i.e.,  $b^i_{jk}=0$ and $c^i_j=0$. We try to determine the solution in the form
\begin{equation}\label{ansatz}
\mathbf{u}(t,x)=\sum_{\alpha \in {\mathbb Z}^n}C_{\alpha}(t)\exp\left( i 2\pi\alpha x \right),
\end{equation}
where 
\begin{equation}
C_{\alpha}(t)=\left(C^1_{\alpha}(t),\cdots ,C^n_{\alpha}(t) \right)^T
\end{equation}
are vector-valued functions to be determined. Plugging (\ref{ansatz}) into (\ref{parasyst2}) we get
\begin{equation}
\frac{\partial C^i_{\alpha}}{\partial t}=
-\sum_{j,k,l=1}^na^{ij}_{kl}4\pi^2\alpha_k\alpha_lC^j,
\end{equation}
or, in matrix notation (with $A_{\alpha}=(A^{ij}_{\alpha}):=\left( \sum_{kl}a^{ij}_{kl}4\pi^2\alpha_k\alpha_l\right)$), this is
\begin{equation}
\frac{\partial C_{\alpha}}{\partial t}=-A_{\alpha}C_{\alpha}.
\end{equation}
Since $A$ is positive definite according to our assumptions we have
\begin{equation}
C_{\alpha}(t)=\exp\left(-A_{\alpha}t\right)C_{\alpha}(0). 
\end{equation}
Moreover $C^i_{\alpha}$ must be the $\alpha$th Fourier coefficients $f_{i\alpha}$ (with respect to some order of multiindices) of the function $f_i$, i.e.,
\begin{equation}
f_{i\alpha}=\int_{{\mathbb T}^n}f_i(y)\exp\left(-i2\pi\alpha y \right)dy 
\end{equation}

\begin{equation}
f_i(x)=\sum_{\alpha \in {\mathbb Z}^n}f_{i\alpha}\exp\left(i2\pi \alpha x\right).
\end{equation}
Let
\begin{equation}
f_{\alpha}=\left(f_{\alpha 1},\cdots ,f_{\alpha n} \right)^T. 
\end{equation}
Hence, we have
\begin{equation}
\mathbf{u}(t,x)=\sum_{\alpha \in {\mathbb Z}^n}\int_{{\mathbb T}^n}\exp\left( -A_{\alpha }t\right) \mathbf{f}(y)\exp\left(i2\pi\alpha(x-y)\right)dy.  
\end{equation}
Next we define
\begin{equation}
\begin{array}{ll}
\mathbf{\Theta}^A(t,x-y)=\sum_{\alpha \in {\mathbb Z}^n}\exp\left( -A_{\alpha }t\right) \stackrel{\rightarrow}{\exp}\left(i2\pi\alpha(x-y)\right),
\end{array}
\end{equation}
where
\begin{equation}
\stackrel{\rightarrow}{\exp}\left(i2\pi\alpha(x-y)\right)=\left(\exp\left(i2\pi\alpha(x-y),\cdots ,\exp\left(i2\pi\alpha(x-y)\right)\right) \right)^T
\end{equation}

Next recall that $A_{\alpha}=Q_{\alpha}\Lambda_{\alpha} Q^T_{\alpha}$ for diagonal $\Lambda_{\alpha}$ with positive entries $\lambda_i>0$ and orthogonal $Q$.
Hence,
\begin{equation}
A_{\alpha}=\left( Q_{\alpha}\Lambda^{1/2}_{\alpha}Q_{\alpha}^TQ_{\alpha}\Lambda^{1/2}_{\alpha}Q^T_{\alpha}\right)=\left( Q_{\alpha}\Lambda^{1/2}_{\alpha}Q^T_{\alpha}\right)^2
\end{equation}
Next define (with $\Delta x=\sqrt{\Delta t}$)
\begin{equation}
\begin{array}{ll}
B^{\Lambda}:T\times \Omega_n\rightarrow ^*{\mathbb R}^n_{\Delta x}\otimes ^*{\mathbb R}^n_{\Delta x}\\
\\
\left( B^{\Lambda}(t,\omega)\right)_{ij}=\sum_{i=1}^n\left(\sum_{s<t}\omega_i(s)\sqrt{\Delta t} \right)\lambda_i\delta_{ij}, 
\end{array}
\end{equation}
where $\delta_{ij}$ denotes the classical Kronecker $\delta$, and for a positive definite matrix $A$ with decomposition $Q\Lambda Q^T$ and $\Lambda=\mbox{diag}\left( \lambda_{i}\right) $  define
\begin{equation}
\begin{array}{ll}
B^{\sqrt{A}}:T\times \Omega_n\rightarrow ^*{\mathbb R}^n_{\Delta x}\otimes ^*{\mathbb R}^n_{\Delta x}\\
\\
B^{\sqrt{A}}(t,\omega):=Q B^{{\Lambda}^{1/2}}(t,\omega)Q^T.
\end{array}
\end{equation}
Then we have
\begin{equation}
E\left[\exp\left(iB^{\sqrt{A}}(t,.)\right)\right]\approx \exp\left(-At\right),  
\end{equation}
i.e.
\begin{equation}
E\left[\exp\left(iB^{\sqrt{A}}(t,.)\right)\right]- \exp\left(-At\right)  
\end{equation}  
equals a matrix with infinitely small entries.
Hence, we have the representation
\begin{equation}
\begin{array}{ll}
\mathbf{\Theta}^A(t,x-y)\approx \sum_{\alpha \in {\mathbb Z}^n}E\left[\exp\left(i\alpha B^{\sqrt{A_{\alpha}}}(t,.)\right) \stackrel{\rightarrow}{\exp}\left(i2\pi\alpha(x-y)\right)\right]
\end{array}
\end{equation}
Hence, we have the representation
\begin{equation}
\begin{array}{ll}
\mathbf{u}(t,x)=\sum_{\alpha \in {\mathbb Z}^n}\int_{{\mathbb T}^n}\exp\left( -A_{\alpha }t\right) \mathbf{f}(y)\exp\left(i2\pi\alpha(x-y)\right)dy\\
\\
\approx \sum_{\alpha \in {\mathbb Z}^n} E\left[\exp\left(i\alpha B^{\sqrt{A_{\alpha}}}(t,.)\right) \int_{{\mathbb T}^n}\mathbf{f}(y)\exp\left(i2\pi\alpha(x-y)\right)dy\right]\\
\\
=\sum_{\alpha \in {\mathbb Z}^n}E\left[\exp\left(i\alpha B^{\sqrt{A_{\alpha}}}(t,.)\right) \hat{\mathbf{f}}_{\alpha}\right].
\end{array}
\end{equation}
The theorems (\ref{genFKthm}) and (\ref{ellbdthm}) then follow by standard arguments.
Next we consider systems with variable second order coefficients $x\rightarrow a^{ij}_{kl}(x)$. What we need for our construction is global existence and the requirement that for all $1\leq i,j\leq n$ the matrices
\begin{equation}\label{ellvar}
A_{\alpha}(x)=(A^{ij}_{\alpha})(x):=\left( \sum_{kl}a^{ij}_{kl}(x)\alpha_k\alpha_l\right)
\end{equation}
are uniformly elliptic in $x\in {\mathbb R}^n$ and for all $\alpha$ where all $\alpha_i\neq 0$. In order to avoid technicalities we assume that for all $\alpha\in {\mathbb Z}^n$ the functions $x\rightarrow A_{\alpha}(x)$ are $C^{\infty}$, bounded, and with bounded derivatives. 
We may consider the matrices in (\ref{ellvar}) on a hyperfinite lattice $^*{\mathbb R}^n_{\Delta x}$ and use the same symbol $A_{\alpha}=A_{\alpha}(x)$, where now $x\in ^*{\mathbb R}^n_{\Delta x}$. For each $\alpha \in {\mathbb Z}^n$ and each $x\in ^*{\mathbb R}^n_{\Delta x}$ consider the positive definite matrix $A_{\alpha}(x)$ with decomposition $A_{\alpha}(x)=Q^{\alpha}_x\Lambda_x Q^{\alpha,T}_x$ and $\Lambda_x=\mbox{diag}\left( \lambda_{xi}\right) $. Then  define the increment
\begin{equation}
\begin{array}{ll}
\Delta X^{\sqrt{A_{\alpha}}}:T\times \Omega_n\rightarrow ^*{\mathbb R}^n_{\Delta x}\otimes ^*{\mathbb R}^n_{\Delta x}\\
\\
\Delta X^{\sqrt{A_{\alpha}}}(t,\omega):=Q_x \Delta B^{{\Lambda_x}^{1/2}}(t,\omega)Q_x^T,
\end{array}
\end{equation}
where
\begin{equation}
\begin{array}{ll}
\Delta B^{\sqrt{\Lambda_x}^{1/2}}(t,\omega)=\sum_{i=1}^n\omega_i(t)\sqrt{\Delta t}\mathbf{e}_i\sqrt{\lambda_{xi}}. 
\end{array}
\end{equation}
Then the $\alpha$ mode $X^{\sqrt{A_{\alpha}}}:T\times \Omega_n\rightarrow ^*{\mathbb R}^n_{\Delta x}\otimes ^*{\mathbb R}^n_{\Delta x}$ of a generalized  process $X^{\otimes, A}(t,.)$ s may be defined by an internal sum of increments $\Delta X^{\sqrt{A_{\alpha}}}$. A generalized process is then given by
\begin{equation}\label{genbrownX}
X^{\otimes, A}(t,.):=\left( \exp\left(i\alpha X^{\sqrt{A_{\alpha}}}(t,.)\right)\right)_{\alpha\in {\mathbb Z}^n}
\end{equation}
and can be used in order to set up a probabilistic scheme for parabolic systems with variable coefficients.

\end{document}